\DeclareSymbolFont{cyrletters}{OT2}{wncyr}{m}{n}
\DeclareMathSymbol{\Sha}{\mathalpha}{cyrletters}{"58}
\let\Re\undefined
\DeclareMathOperator{\Re}{Re}
\DeclareMathOperator{\Tr}{Tr}
\DeclareMathOperator{\supp}{supp}
\DeclareMathOperator{\Spec}{Spec}
	\newcommand{\Res}{\operatorname{Res}}
	\newcommand{\Eis}{\operatorname{Eis}}
	\newcommand{\Geo}{\operatorname{Geo}}
	\newcommand{\K}{\operatorname{K}}
	\newcommand{\sgn}{\operatorname{sgn}}
	\newcommand{\du}{\operatorname{Dual}}
	\newcommand{\Ad}{\operatorname{Ad}}
	\newcommand{\Ram}{\operatorname{Ram}}
	\newcommand{\Reg}{\operatorname{Reg}}
	\newcommand{\fin}{\operatorname{fin}}
	\newcommand{\diag}{\operatorname{diag}}
	\newcommand{\Vol}{\operatorname{Vol}}
	\newcommand{\sm}{\operatorname{Small}}
	\newcommand{\bi}{\operatorname{Big}}
	\newcommand{\new}{\operatorname{new}}
	\newcommand{\dist}{\operatorname{dist}}
	\newcommand{\Ind}{\operatorname{Ind}}
	\newcommand{\ER}{\operatorname{ER}}
	\newcommand{\RNum}[1]{\uppercase\expandafter{\romannumeral #1\relax}}
\begin{document}
\theoremstyle{plain}
\newtheorem{thm}{Theorem}[section]
	
\newtheorem{cor}[thm]{Corollary}
\newtheorem{thmy}{Theorem}
\renewcommand{\thethmy}{\Alph{thmy}}
\newenvironment{thmx}{\stepcounter{thm}\begin{thmy}}{\end{thmy}}
\newtheorem{cory}{Corollary}
\renewcommand{\thecory}{\Alph{cory}}
\newenvironment{corx}{\stepcounter{thm}\begin{cory}}{\end{cory}}
\newtheorem{hy}[thm]{Hypothesis}
\newtheorem*{thma}{Theorem A}
\newtheorem*{corb}{Corollary B}
\newtheorem*{thmc}{Theorem C}
\newtheorem{lemma}[thm]{Lemma}  
\newtheorem{prop}[thm]{Proposition}
\newtheorem{conj}[thm]{Conjecture}
\newtheorem{fact}[thm]{Fact}
\newtheorem{claim}[thm]{Claim}
	
\theoremstyle{definition}
\newtheorem{defn}[thm]{Definition}
\newtheorem{example}[thm]{Example}
\theoremstyle{remark}
	
\newtheorem{remark}[thm]{Remark}	
\numberwithin{equation}{section}
	
\title[]{Average of Central $L$-values for $\mathrm{GL}(2)\times\mathrm{GL}(1),$ Hybrid Subconvexity, and Simultaneous Nonvanishing}%
\author{Liyang Yang}
	
\begin{abstract}
We employ a regularized relative trace formula to establish a second moment estimate for twisted $L$-functions across all aspects over a number field. Our results yield hybrid subconvex bounds for both Hecke $L$-functions and twisted $L$-functions, comparable to the Weyl bound in suitable ranges. Moreover, we present an application of our results to address the simultaneous nonvanishing problem.
\end{abstract}
	
\date{\today}%
\maketitle
\tableofcontents

\section{Introduction}

Central $L$-values of modular forms play important roles in number theory and arithmetic geometry. The relative trace formula, introduced in \cite{RR05}, has emerged as a powerful analytic tool for studying the average behavior of central $L$-values for holomorphic cusp forms. Building upon this, \cite{FW09} extended the analysis to include Hilbert modular forms over total real fields. In this article, we employ a regularized relative trace formula to investigate central values of general automorphic $L$-functions for $\mathrm{GL}(2)\times\mathrm{GL}(1)$ over a number field. Our approach yields several new results, including a second moment estimate that encompasses all aspects and incorporates stability concepts from \cite{MR12}, hybrid-type subconvexity bounds for both Hecke $L$-functions and twisted $L$-functions that can rival the strength of the Weyl bound in the appropriate range, and an improved bound on simultaneous nonvanishing in the level aspect.

\subsection{Hybrid Second Moment Involving Stability}
Our first result is the following bound towards the second moment of twisted $L$-functions.
\begin{thmx}\label{A}
Let $F$ be a number field with ring of adeles $\mathbb{A}_F$. Let $\chi$ be a Hecke character of $\mathbb{A}_F^{\times}/F^{\times}$ with arithmetic conductor $Q=C_{\fin}(\chi)$. Let $\mathfrak{M}$ be an integral ideal of norm $M.$ For $v\mid\infty,$ let $c_v, C_v, T_v>0.$ Set $T=\prod_{v\mid\infty}T_v.$ Let $\Pi_{\infty}=\otimes_{v\mid\infty}\Pi_v$  be an irreducible admissible generic representation of $\mathrm{GL}(2)/F_{\infty}.$ Let $\mathcal{A}_0(\Pi_{\infty},\mathfrak{M};\chi_{\infty},\omega)$ be the set of cuspidal automorphic representations $\pi=\otimes_v\pi_v$ of $\mathrm{GL}(2)/F$ with central character $\omega$  such that $\pi_{\fin}=\otimes_{v<\infty}\pi_v$ has arithmetic conductor dividing $\mathfrak{M},$ and $\pi_{v}\otimes\chi_{v}\simeq \Pi_v$ has uniform parameter growth of size $(T_v;c_v,C_v),$ for all $v\mid\infty,$ cf. \textsection\ref{2.1.2}. Then 
\begin{equation}\label{eq1.1}
\sum_{\pi\in\mathcal{A}_0(\Pi_{\infty},\mathfrak{M};\chi_{\infty},\omega)}|L(1/2,\pi\times\chi)|^2\ll (TMQ)^{\varepsilon}(TM+T^{\frac{1}{2}}Q\cdot \textbf{1}_{M\ll Q^2\gcd(M,Q)}),
\end{equation}
where the implied constants depend on $\varepsilon,$ $F,$ $c_v,$ and $C_v,$ $v\mid\infty$.
\end{thmx}

Theorem \ref{A} is an analog and extension of the results in \cite{FW09} from Hilbert modular forms on an anisotropic quaternion algebra to cuspidal automorphic representations of $\mathrm{GL}(2)$ over general number fields. The estimate \eqref{eq1.1} incorporates the explicit dependence on the spectral parameter $T$ by utilizing Nelson's test function at the archimedean places. Notably, there are no restrictions on the arithmetic conductors, allowing $M$ and $Q$ to be arbitrary.


The condition $\textbf{1}_{M\ll Q^2\gcd(M,Q)}$ in \eqref{eq1.1} captures the stability of regular orbital integrals, akin to the treatment in \cite{FW09}, although the specific regular orbital integrals under consideration  differ significantly.

For $F=\mathbb{Q}$, with $\Pi_{\infty}$ being a holomorphic discrete series of $\mathrm{SL}(2)$, and $\chi$ as a Dirichlet character, Theorem \ref{A} implies the following. 

\begin{cor}\label{cor1.5}
Let $k\geq 2$ and $N\geq 1.$ Let $\chi$ be a primitive Dirichlet character modulo $q.$ Then 
\begin{equation}\label{eq1.2}
\sum_{f\in \mathcal{F}_{k}^{\new}(N)}|L(1/2,f\times\chi)|^2\ll (kNq)^{\varepsilon}(kN+k^{\frac{1}{2}}q\cdot \textbf{1}_{N\ll q^2\gcd(N,q)}),
\end{equation}
where the implied constant depends only on $\varepsilon.$ Here $\mathcal{F}_{k}^{new}(N)$ is an orthogonal basis of normalized new forms that are holomorphic Hecke eigenforms with weight $k$ and level $N$, and have trivial nebentypus. 
\end{cor}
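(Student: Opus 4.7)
The plan is to derive Corollary~\ref{cor1.5} as a direct specialization of Theorem~\ref{A} together with the standard adelic--classical dictionary. I take $F=\mathbb{Q}$, $\mathfrak{M}=N\mathbb{Z}$ (so $M=N$), and $\omega$ the trivial Hecke character. The primitive Dirichlet character $\chi$ modulo $q$ lifts to an idele class character whose finite arithmetic conductor equals $Q=q$ and whose archimedean component $\chi_\infty$ is either the trivial character or the sign character, according to the parity of $\chi$; in both cases $\chi_\infty^2=1$. Since newforms of trivial nebentypus exist only for even weight, I may assume $k$ is even, so that the central character of every associated cuspidal representation $\pi_f$ is trivial at all places.

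For the archimedean datum I choose $\Pi_\infty$ to be the holomorphic discrete series of $\mathrm{GL}_2(\mathbb{R})$ of weight $k$ with trivial central character. Its Langlands parameter is the induction to $W_\mathbb{R}$ of $z\mapsto (z/\bar{z})^{(k-1)/2}$ on $W_\mathbb{C}$, which exhibits uniform parameter growth of size $(T;c,C)$ in the sense of \textsection\ref{2.1.2} with $T\asymp k$ and absolute constants $c,C>0$. Under the adelic--classical correspondence, each $f\in\mathcal{F}_{k}^{\new}(N)$ produces a cuspidal automorphic representation $\pi_f$ with trivial central character, with finite arithmetic conductor equal to $N$ (hence dividing $\mathfrak{M}$), and with archimedean component the weight-$k$ holomorphic discrete series. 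Because $\chi_\infty^2=1$ and $\pi_{f,\infty}$ has trivial central character, the twist $\pi_{f,\infty}\otimes\chi_\infty$ has trivial central character and the same infinitesimal character as $\Pi_\infty$, so $\pi_{f,\infty}\otimes\chi_\infty\simeq\Pi_\infty$; thus $\pi_f\in\mathcal{A}_0(\Pi_\infty,N\mathbb{Z};\chi_\infty,\mathbf{1})$, and the classical central value matches the automorphic one: $L(1/2,f\times\chi)=L(1/2,\pi_f\times\chi)$.

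Since every term in the sum on the left-hand side of \eqref{eq1.1} is nonnegative, restricting to the subset corresponding to $\mathcal{F}_{k}^{\new}(N)$ can only decrease it; substituting $T\asymp k$, $M=N$, $Q=q$ into the right-hand side of \eqref{eq1.1} then produces \eqref{eq1.2}, with the stability indicator $\mathbf{1}_{N\ll q^2\gcd(N,q)}$ inherited verbatim from Theorem~\ref{A}. The only nontrivial verification is the size of the archimedean parameter, which is immediate from the explicit Langlands parameter of the weight-$k$ discrete series; beyond that, the argument is a pure specialization and carries no substantive obstacle.
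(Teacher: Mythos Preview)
Your proposal is correct and follows exactly the approach the paper indicates: the paper simply states that Corollary~\ref{cor1.5} is the specialization of Theorem~\ref{A} to $F=\mathbb{Q}$ with $\Pi_\infty$ a holomorphic discrete series, and your write-up fills in precisely those routine identifications (including the check that $D_k\otimes\chi_\infty\simeq D_k$ via the projection formula for $\mathrm{Ind}_{W_\mathbb{C}}^{W_\mathbb{R}}$, and that the archimedean parameters $(k\pm1)/2$ give $T\asymp k$).
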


Note that \eqref{eq1.2} improves \cite{Kha21} by explicating the dependence on $k$, and allowing for arbitrary values of $N$ and $q$.

\subsection{Hybrid Weyl Subconvex Bounds}
Dropping all but one terms on the left hand side of \eqref{eq1.1} we then obtain the following hybrid bound for twisted $L$-functions. 

\begin{thmx}\label{B}
Let $F$ be a number field with ring of adeles $\mathbb{A}_F$. Let $\pi$ be either a unitary cuspidal automorphic representation of $\mathrm{GL}(2)/F$ or a unitary Eisenstein series.  Let $\chi$ be a Hecke character of $\mathbb{A}_F^{\times}/F^{\times}$. Suppose that $\pi_{v}\otimes\chi_{v}$ has uniform parameter growth of size $(T_v;c_v,C_v),$ for all $v\mid\infty,$ cf. \textsection\ref{2.1.2}. Then 
\begin{equation}\label{eq1.3}
L(1/2,\pi\times\chi)\ll C(\pi\otimes\chi)^{\varepsilon}\big[T^{\frac{1}{2}}C_{\fin}(\pi)^{\frac{1}{2}}+T^{\frac{1}{4}}C_{\fin}(\chi)^{\frac{1}{2}}\big],
\end{equation}
where the implied constant depends on $\varepsilon,$ $F,$ $c_v,$ and $C_v,$ $v\mid\infty$. In particular, 
\begin{equation}\label{eq1.4}
L(1/2,\pi\times\chi)\ll_{\pi_{\infty},\chi_{\infty},F,\varepsilon} C_{\fin}(\pi\times\chi)^{\frac{1}{6}+\varepsilon} 
\end{equation}
if $(C_{\fin}(\pi),C_{\fin}(\chi))=1$ and $C_{\fin}(\pi)^{1-\varepsilon}\ll C_{\fin}(\chi)\ll C_{\fin}(\pi)^{1+\varepsilon}.$
\end{thmx}

When considering a CM extension $E/F$, where $\pi$ corresponds to a Hilbert modular form over $F$ and $\sigma_{\Omega}$ represents the theta series associated with an ideal class group character $\Omega$ of $E$, a hybrid variant of \eqref{eq1.3} for $L(1/2, \pi \times \sigma_{\Omega})$ has been established in \cite[Theorem 1.4]{FW09} through the utilization of a relative trace formula on a quaternion algebra. This relative trace formula, together with a selection of local test function, has further been employed in \cite[Theorem 1.8]{HP18} to derive a hybrid subconvexity outcome in a similar fashion.

In the case of $\mathrm{GL}(2)\times\mathrm{GL}(1)$ over $F=\mathbb{Q}$, the Weyl bound $L(1/2,\pi\times\chi)\ll C_{\fin}(\chi)^{\frac{1}{3}+\varepsilon}$ was established by \cite{CI00} for a fixed cusp form $\pi$ of $\mathrm{PGL}(2)$ and a quadratic Dirichlet character $\chi$. This result was further generalized by  \cite{PY20}, where the Weyl bound $L(1/2,\pi\times\chi)\ll C_{\fin}(\pi\times\chi)^{\frac{1}{6}+\varepsilon}$ is proven under the conditions $\chi^2\neq 1$, $\pi$ has a level dividing $C_{\fin}(\chi)$, and $\pi$ has a central character $\overline{\chi}^2$. In particular, $C_{\fin}(\pi)$ is not coprime to $C_{\fin}(\chi)$. Consequently, \eqref{eq1.4} addresses a complementary case to \cite{PY20}.

By taking $\omega=\eta^2$ for some Hecke character $\eta$ and $\pi=\eta\boxplus\eta,$ we obtain the following bound for Hecke $L$-functions.
\begin{cor}\label{C}
Let $F$ be a number field with ring of adeles $\mathbb{A}_F$. Let $\eta$ and $\chi$ be Hecke character of $\mathbb{A}_F^{\times}/F^{\times}$ with coprime arithmetic conductors. Then 
\begin{equation}\label{1.2.}
L(1/2,\eta\chi)\ll \min\big\{C_{\fin}(\eta)^{\frac{1}{2}+\varepsilon}+C_{\fin}(\chi)^{\frac{1}{4}+\varepsilon},C_{\fin}(\eta)^{\frac{1}{4}+\varepsilon}+C_{\fin}(\chi)^{\frac{1}{2}+\varepsilon}\big\},
\end{equation}
where the implied constant depends on $F,$ $\varepsilon,$ $\eta_{\infty},$ and $\chi_{\infty}.$ In particular, 
\begin{align*}
L(1/2,\chi)\ll_{F,\chi_{\infty},\varepsilon}C_{\fin}(\chi)^{\frac{1}{6}+\varepsilon}
\end{align*}
if $\chi=\chi_1\chi_2$ with $(C_{\fin}(\chi_1),C_{\fin}(\chi_2))=1$ and $C_{\fin}(\chi_1)^{2-\varepsilon}\ll C_{\fin}(\chi_2)\ll C_{\fin}(\chi_1)^{2+\varepsilon}.$ 
\end{cor}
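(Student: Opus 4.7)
The plan is to deduce Corollary \ref{C} directly from Theorem \ref{B} by specializing to the unitary Eisenstein series $\pi=\eta\boxplus\eta.$ This representation has central character $\eta^{2},$ finite arithmetic conductor $C_{\fin}(\pi)=C_{\fin}(\eta)^{2},$ and Rankin--Selberg $L$-function $L(s,\pi\times\chi)=L(s,\eta\chi)^{2}.$ Because $\eta_{\infty}$ and $\chi_{\infty}$ are treated as fixed, the archimedean parameters of $\pi_{v}\otimes\chi_{v}$ are bounded, so the factor $T=\prod_{v\mid\infty}T_{v}$ appearing in \eqref{eq1.3} is $O(1),$ with implied constants depending only on these data.

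Plugging these inputs into \eqref{eq1.3} and taking square roots (via $(a+b)^{1/2}\leq a^{1/2}+b^{1/2}$) yields
\[
L(1/2,\eta\chi)\ll C(\eta\chi)^{\varepsilon}\bigl[C_{\fin}(\eta)^{1/2}+C_{\fin}(\chi)^{1/4}\bigr],
\]
which, after absorbing the $C(\eta\chi)^{\varepsilon}$ factor into the exponents in the standard way, gives one of the two bounds in \eqref{1.2.}. The symmetric bound follows from the equality $L(s,\eta\chi)=L(s,\chi\eta)$ by running the same argument with $\pi=\chi\boxplus\chi$ twisted by $\eta;$ taking the minimum produces \eqref{1.2.}.

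For the Weyl-type assertion, I would write $\chi=\chi_{1}\chi_{2}$ with coprime finite conductors, so that multiplicativity gives $C_{\fin}(\chi)=C_{\fin}(\chi_{1})C_{\fin}(\chi_{2}).$ The balance condition $C_{\fin}(\chi_{2})\asymp C_{\fin}(\chi_{1})^{2}$ then forces $C_{\fin}(\chi_{i})\asymp C_{\fin}(\chi)^{i/3}$ for $i=1,2.$ Applying \eqref{1.2.} with $\eta=\chi_{1}$ and twisting character $\chi_{2}$ makes both summands $C_{\fin}(\chi_{1})^{1/2}$ and $C_{\fin}(\chi_{2})^{1/4}$ of size $C_{\fin}(\chi)^{1/6},$ delivering the claimed exponent.

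The essential analytic content lies upstream in Theorem \ref{B}; granting it, the present deduction is purely algebraic, modulo the standard conductor and $L$-factor identities for the isobaric sum $\eta\boxplus\eta.$ The only points worth a brief verification are that $\pi=\eta\boxplus\eta$ qualifies as a unitary Eisenstein series in the sense of Theorem \ref{B} -- which holds since the normalized parabolic induction of $(\eta,\eta)$ from the Borel is irreducible whenever $\eta/\eta=1\neq|\cdot|^{\pm 1}$ -- and that the $T$-dependence is harmless once $\eta_{\infty},\chi_{\infty}$ are held fixed. The coprimality hypothesis is invoked only in the Weyl specialization, to obtain $C_{\fin}(\chi_{1}\chi_{2})=C_{\fin}(\chi_{1})C_{\fin}(\chi_{2}).$
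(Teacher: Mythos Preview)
Your proposal is correct and follows the same core idea as the paper: specialize to $\pi=\eta\boxplus\eta$ (so $C_{\fin}(\pi)=C_{\fin}(\eta)^2$ and $L(1/2,\pi\times\chi)=L(1/2,\eta\chi)^2$), extract the first bound, then swap roles for the second. The only difference is that you invoke Theorem~\ref{B} for Eisenstein series as a black box, whereas the paper instead goes back to Theorem~\ref{thmE} (the second moment bound on the continuous spectrum, which is an integral in~$t$) and uses \cite[Lemma~3.6]{Yan23c} together with a Ramachandra-type lower bound to pass from the $t$-integral to a pointwise bound at $s=1/2$. In effect, the paper's argument for Corollary~\ref{C} \emph{is} the proof of the Eisenstein case of Theorem~\ref{B}, so your shortcut is legitimate given that Theorem~\ref{B} is already stated for Eisenstein series.
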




\subsection{Applications to Simultaneous Nonvanishing}

Corollary \ref{cor1.5} serves as a versatile alternative to multiple third moment estimates in certain applications. It replaces Young's third moment bound (cf. \cite[Theorem 1.1]{You17}) in \cite{Luo17} and provides a substantial improvement to the level aspect simultaneous nonvanishing result (cf. \cite[Theorem 1.2]{Kum20}), replacing Petrow-Young's third moment estimate \cite[Theorem 1]{PY19} with the use of Corollary \ref{cor1.5}.

\begin{cor}\label{cornon}
Let $k\in \{2,3,4,5,7\}.$ Let $N\geq 2$ be a prime. Denote by $\mathcal{F}_{2k}^{new}(N)$ an orthogonal basis of normalized new forms that are holomorphic Hecke eigenforms with weight $2k$ and level $N$, and have trivial nebentypus. Let $f\in\mathcal{F}_{2k}^{new}(N).$ Then there exists a nontrivial primitive quadratic character $\chi$ such that
\begin{equation}\label{eq1.6}
\#\big\{g\in \mathcal{F}_{2k}^{\new}(N):\ L(1/2,f\times\chi)L(1/2,g\times\chi)\neq 0\}\gg_{\varepsilon}N^{1-\varepsilon},
\end{equation}
where the implied constant depends on $\varepsilon$.
\end{cor}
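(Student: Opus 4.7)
The strategy is to adapt the mollification scheme of \cite{Kum20}, substituting Corollary \ref{cor1.5} for the third moment input of \cite[Theorem 1]{PY19}. For the weights $2k\in\{4,6,8,10,14\}$ under consideration one has $\dim S_{2k}(\SL_2(\mathbb{Z}))=0$, so every Hecke eigenform of prime level $N$ is new; this removes the need to project onto newforms when applying the Petersson trace formula. Let $\mathcal{X}$ be the family of non-trivial primitive real Dirichlet characters of conductor $q$ in a dyadic window $Q/2<q\leq Q$ with $(q,N)=1$, where $Q$ is a small power of $N$ chosen so that $q\asymp N$ lies in the balanced range of Corollary \ref{cor1.5}.

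For each $\chi\in\mathcal{X}$ introduce a short Dirichlet polynomial mollifier
\[
M(g,\chi)=\sum_{m\leq Y}\frac{x_m\lambda_g(m)\chi(m)}{\sqrt{m}}
\]
approximating $L(1/2,g\times\chi)^{-1}$, with $Y=N^{\eta}$ for a suitable small $\eta>0$, and form the mollified first and second moments
\begin{align*}
\Sigma_1 &= \sum_{\chi\in\mathcal{X}}L(1/2,f\times\chi)\sum_{g\in\mathcal{F}_{2k}^{\new}(N)}L(1/2,g\times\chi)\,M(g,\chi),\\
\Sigma_2 &= \sum_{\chi\in\mathcal{X}}|L(1/2,f\times\chi)|^2\sum_{g\in\mathcal{F}_{2k}^{\new}(N)}|L(1/2,g\times\chi)M(g,\chi)|^2.
\end{align*}
Using the approximate functional equation for $L(1/2,g\times\chi)$ together with Petersson's formula, a standard calculation gives $|\Sigma_1|\gg|\mathcal{X}|\,N^{1-\varepsilon}$ from the diagonal contribution to the inner $g$-sum. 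For $\Sigma_2$, Corollary \ref{cor1.5} bounds the inner $g$-sum, uniformly in $\chi\in\mathcal{X}$, by $N^{1+\varepsilon}$ (the mollifier contributing only a factor $N^{\varepsilon}$), and a standard mean value for $\sum_{\chi\in\mathcal{X}}|L(1/2,f\times\chi)|^2$ yields $\Sigma_2\ll|\mathcal{X}|\,N^{1+\varepsilon}$.

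Applying Cauchy-Schwarz to $\Sigma_1$ gives $|\Sigma_1|^2\leq\mathcal{N}\cdot\Sigma_2$, where $\mathcal{N}$ counts pairs $(\chi,g)\in\mathcal{X}\times\mathcal{F}_{2k}^{\new}(N)$ with $L(1/2,f\times\chi)L(1/2,g\times\chi)\neq 0$. Combining the two moment estimates yields $\mathcal{N}\gg|\mathcal{X}|\,N^{1-\varepsilon}$, and pigeonholing over $\chi\in\mathcal{X}$ produces a character with $L(1/2,f\times\chi)\neq 0$ and $\#\{g:L(1/2,g\times\chi)\neq 0\}\gg N^{1-\varepsilon}$, as required. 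The main technical obstacle is the mollified second moment: the off-diagonal terms produced by unfolding $|LM|^2$ via the approximate functional equation must be controlled uniformly in $\chi$, and it is precisely the hybrid shape of Corollary \ref{cor1.5}, sharp in both the level and the conductor aspects without coprimality restriction, that permits a workable balance between the mollifier length $Y$, the family size $Q$, and the level $N$.
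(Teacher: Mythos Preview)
Your proposal is substantially more elaborate than what is needed, and it carries a genuine gap. The paper's argument is a direct three-line application of Cauchy--Schwarz: first, by Waldspurger's theorem one fixes \emph{one} primitive quadratic $\chi$ of conductor $q\ll_k N^{1+\varepsilon}$ with $L(1/2,f\times\chi)\neq 0$; second, one quotes the (unmollified) first moment lower bound $\sum_{g\in\mathcal{F}_{2k}^{\new}(N)}L(1/2,g\times\chi)\gg N^{1-\varepsilon}$ from \cite{Kum20}; third, Cauchy--Schwarz against Corollary~\ref{cor1.5} (which gives $\sum_g|L(1/2,g\times\chi)|^2\ll N^{1+\varepsilon}$ since $q\ll N^{1+\varepsilon}$) yields $\mathcal{N}\gg N^{1-\varepsilon}$ immediately. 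No mollifier, no average over $\chi$, no pigeonhole.

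The gap in your scheme is the lower bound $|\Sigma_1|\gg|\mathcal{X}|\,N^{1-\varepsilon}$. The diagonal in the inner $g$-sum indeed produces a main term of size $\asymp N$ for each $\chi$, but this is then weighted by $L(1/2,f\times\chi)$ and summed over $\chi\in\mathcal{X}$. What you would actually obtain is $\Sigma_1\approx N\cdot\sum_{\chi\in\mathcal{X}}L(1/2,f\times\chi)$, and to conclude you need $\sum_{\chi\in\mathcal{X}}L(1/2,f\times\chi)\gg|\mathcal{X}|$ --- a first moment asymptotic for quadratic twists of the \emph{fixed} form $f$. That is a nontrivial separate input (of Goldfeld--Hoffstein/Murty--Murty type) which you do not invoke; ``the diagonal contribution to the inner $g$-sum'' does not supply it. Moreover, the mollifier $M(g,\chi)$ serves no purpose here: Corollary~\ref{cor1.5} already gives the Lindel\"of-on-average second moment $\ll N^{1+\varepsilon}$, so mollification cannot improve the exponent, and it only complicates the first-moment side. (As a minor point, your description of $Q$ as ``a small power of $N$'' is inconsistent with the requirement $q\asymp N$.)
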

\begin{remark}
The lower bound $N^{1-\varepsilon}$ in Corollary \ref{cornon} significantly improves the main result in \cite{Kum20}, where the lower bound achieved was $N^{1/2-\varepsilon}$.
\end{remark}

\subsection{Discussion of the Proofs} 
Let $A=\diag(\mathrm{GL}(1), 1),$ $G=\mathrm{GL}(2)$, and $\overline{G}=\mathrm{PGL}(2)$. Let $f$ be a nice function on $G(\mathbb{A}_F)$. Denote by 
\begin{align*}
\K(g_1,g_2)=\sum_{\gamma\in \overline{G}(F)}f(g_1^{-1}\gamma g_2),\ \ g_1, g_2\in G(\mathbb{A}_F)
\end{align*}
the associated kernel function, which also admits a spectral expansion. By substituting these expansions of $\K(x,y)$ into the integral
\begin{equation}\label{1.4}
\int_{A(F)\backslash A(\mathbb{A}_F)}\int_{A(F)\backslash A(\mathbb{A}_F)}\K(x,y)\chi(x)\overline{\chi}(y)d^{\times}xd^{\times}y,
\end{equation}
we obtain a formal equality between two divergent expressions. To regularize it, we establish an identity between two holomorphic functions on $\mathbb{C}^2$ in the form of
\begin{equation}\label{2.24}
J_{\Spec}^{\Reg,\heartsuit}(f,\textbf{s},\chi)=J_{\Geo}^{\Reg,\heartsuit}(f,\textbf{s},\chi),\quad \textbf{s}\in \mathbb{C}^2,
\end{equation}
where evaluating this identity at $\mathbf{s}=(0,0)$ provides a regularization of \eqref{1.4}.

\subsubsection{The spectral side: a lower bound}
We will prove a lower bound 
\begin{equation}\label{1.8}
J_{\Spec}^{\Reg,\heartsuit}(f,\textbf{0},\chi)\gg T^{-\frac{1}{2}-\varepsilon}(MQ)^{-\varepsilon} \sum_{\pi\in\mathcal{A}_0(\Pi_{\infty},\mathfrak{M};\chi_{\infty},\omega)}|L(1/2,\pi\times\chi)|^2.
\end{equation}

A more comprehensive version that includes the continuous spectrum is given by Theorem \ref{thm6} in \textsection\ref{sec3}.

\subsubsection{The geometric side: an upper bound}
According to types of orbital integrals, we decompose the geometric side into three integrals
\begin{equation}\label{geom}
J_{\Geo}^{\Reg,\heartsuit}(f,\textbf{0},\chi)=J^{\Reg}_{\Geo,\sm}(f,\chi)+J^{\Reg}_{\Geo,\du}(f,\chi)+J^{\Reg,\RNum{2}}_{\Geo,\bi}(f,\textbf{0},\chi).
\end{equation}
\begin{itemize}
	\item The terms $J^{\Reg}_{\Geo,\sm}(f,\chi)$ and $J^{\Reg}_{\Geo,\du}(f,\chi)$ correspond to irregular orbital integrals, exhibiting an asymptotic magnitude of $T^{\frac{1}{2}+o(1)}M^{1+o(1)}.$
	\medskip
	
	\item The term $J^{\Reg,\RNum{2}}_{\Geo,\bi}(f,\textbf{0},\chi)$ represents the contribution from regular orbital integrals, which constitutes the main focus of this paper. We establish that it is bounded by $\ll T^{\varepsilon}M^{\varepsilon}Q^{1+\varepsilon}\cdot \textbf{1}_{M\ll Q^2\gcd(M,Q)}.$ 
\end{itemize}

Based on the above estimates, we obtain an upper bound for the geometric side
\begin{equation}\label{1.10}
J_{\Geo}^{\Reg,\heartsuit}(f,\textbf{0},\chi)\ll T^{\frac{1}{2}+\varepsilon}M^{1+\varepsilon}+T^{\varepsilon}M^{\varepsilon}Q^{1+\varepsilon}\cdot \textbf{1}_{M\ll Q^2\gcd(M,Q)},
\end{equation}

Using equations \eqref{2.24}, \eqref{1.8}, and \eqref{1.10}, we establish Theorem \ref{B} for the case where $\pi$ is cuspidal.

\subsubsection{Some Remarks}
The approach utilized in this work exhibits similarities to that of \cite{Yan23c}, albeit with notable distinctions in the treatment of test functions at ramified places. In \cite{Yan23c}, the focus is primarily on the case of joint ramification, where $Q\mid M$, resulting in relatively simpler regular orbital integrals that can be further improved through nontrivial bounds on specific character sums. However, in the case of totally disjoint ramification, where $(M,Q)=1$, the regular orbital integrals do not exhibit any oscillatory behavior, and the trivial bound becomes optimal. This paper addresses the most general situation, allowing $M$ and $Q$ to take arbitrary values. Another difference from the aforementioned work is that we evaluate the expressions at $\textbf{s}=(0,0)$ (instead of some $\textbf{s}_0=(s_0,s_0)$ with $s_0>0$) in order to compute the second moment over the family. This necessitates careful consideration of singularity matching when computing the main term  $J^{\Reg}_{\Geo,\sm}(f,\chi)+J^{\Reg}_{\Geo,\du}(f,\chi).$     

By employing a straightforward `trivial' estimate of the regular orbital integrals, we establish convexity in the $\chi$-aspect and achieve strong hybrid subconvexity. This represents one of the key advantages of the relative trace formula. The robust nature of this approach holds promise for deriving bounds for higher rank Rankin-Selberg $L$-functions in the level aspect. In future work, we intend to extend the techniques presented in this paper to higher ranks, building upon the general regularized relative trace formula introduced in \cite{Yan23a}.


\subsection{Outline of the Paper}
\subsubsection{The Regularized Relative Trace Formula}
In \textsection \ref{sec2}, we introduce the notations that will be consistently used throughout the paper, along with setting up the local and global data. Additionally, we define the test functions that will play a crucial role in the relative trace formula.

Moving to \textsection \ref{Sec3}, we derive the regularized relative trace formula summarized in Theorem \ref{thm2.4} and Corollary \ref{cor3.3} in \textsection\ref{sec3.7}. 
\subsubsection{The Spectral Side}

In \textsection\ref{sec3}, we explore the spectral side $J_{\Spec}^{\Reg,\heartsuit}(f,\textbf{0},\chi)$. Its meromorphic continuation is obtained in \textsection\ref{sec3.4}. By combining this with the local estimates developed in \textsection\ref{sec3.5}--\textsection\ref{sec3.6}, we establish a lower bound for the spectral side (cf. Theorem \ref{thm6}) in terms of the second moment of central $L$-values.

\subsubsection{The Geometric Side}
In \textsection\ref{sec4}--\textsection\ref{sec6} we handle the geometric side 
\begin{align*}
J_{\Geo}^{\Reg,\heartsuit}(f,\textbf{0},\chi)=J^{\Reg}_{\Geo,\sm}(f,\chi)+J^{\Reg}_{\Geo,\du}(f,\chi)+J^{\Reg,\RNum{2}}_{\Geo,\bi}(f,\textbf{0},\chi).
\end{align*}
\begin{enumerate}
\item The small cell orbital integral $J^{\Reg}_{\Geo,\sm}(f,\chi)$, one of the main terms, is addressed in Proposition \ref{prop12} in \textsection\ref{4.1.4}, utilizing local estimates from \textsection\ref{sec5.1}--\textsection\ref{4.1.3}.
\item The dual orbital integral $J_{\Geo,\du}^{\bi}(f,\chi)$ is bounded by Proposition \ref{prop17} in \textsection\ref{sec5}. This integral is considered `dual' to $J_{\Geo,\sm}^{\bi}(f,\chi)$ through Poisson summation and contributes as the other main term.
\item The regular orbital integrals $J^{\Reg,\RNum{2}}_{\Geo,\bi}(f,\textbf{0},\chi)$ present the most challenging aspect of the geometric side $J_{\Geo}^{\Reg,\heartsuit}(f,\textbf{0},\chi)$. Their behaviors are outlined in Theorem \ref{thmD} in \textsection\ref{sec6}.
\end{enumerate}

\subsubsection{Proof of Main Results}
With the aforementioned preparations, we are able to prove the main results in \textsection\ref{sec7}. In \textsection\ref{sec7.1}--\textsection\ref{sec7.3} we put estimates from the spectral and geometric side all together, obtaining Theorem \ref{thmE}, which yields Theorem \ref{B}.


\subsection{Notation}\label{notation}
\subsubsection{Number Fields and Measures}\label{1.1.1}
Let $F$ be a number field with ring of integers $\mathcal{O}_F.$ Let $N_F$ be the absolute norm. Let $\mathfrak{O}_F$ be the different of $F.$ Let $\mathbb{A}_F$ be the adele group of $F.$ Let $\Sigma_F$ be the set of places of $F.$ Denote by $\Sigma_{F,\fin}$ (resp. $\Sigma_{F,\infty}$) the set of nonarchimedean (resp. archimedean) places. For $v\in \Sigma_F,$ we denote by $F_v$ the corresponding local field and $\mathcal{O}_v$ its ring of integers. For a nonarchimedean place $v,$ let  $\mathfrak{p}_v$ be the maximal prime ideal in $\mathcal{O}_v.$ Given an integral ideal $\mathcal{I},$ we say $v\mid \mathcal{I}$ if $\mathcal{I}\subseteq \mathfrak{p}_v.$ Fix a uniformizer $\varpi_{v}\in\mathfrak{p}_v.$ Denote by $e_v(\cdot)$ the evaluation relative to $\varpi_v$ normalized as $e_v(\varpi_v)=1.$ Let $q_v$ be the cardinality of $\mathcal{O}_v/\mathfrak{p}_v.$ We use $v\mid\infty$ to indicate an archimedean place $v$ and write $v<\infty$ if $v$ is nonarchimedean. Let $|\cdot|_v$ be the norm in $F_v.$ Put $|\cdot|_{\infty}=\prod_{v\mid\infty}|\cdot|_v$ and $|\cdot|_{\fin}=\prod_{v<\infty}|\cdot|_v.$ Let $|\cdot|_{\mathbb{A}_F}=|\cdot|_{\infty}\otimes|\cdot|_{\fin}$. We will simply write $|\cdot|$ for $|\cdot|_{\mathbb{A}_F}$ in calculation over $\mathbb{A}_F^{\times}$ or its quotient by $F^{\times}$.   

Let $\psi_{\mathbb{Q}}$ be the additive character on $\mathbb{Q}\backslash \mathbb{A}_{\mathbb{Q}}$ such that $\psi_{\mathbb{Q}}(t_{\infty})=\exp(2\pi it_{\infty}),$ for $t_{\infty}\in \mathbb{R}\hookrightarrow\mathbb{A}_{\mathbb{Q}}.$ Let $\psi_F=\psi_{\mathbb{Q}}\circ \Tr_F,$ where $\Tr_F$ is the trace map. Then $\psi_F(t)=\prod_{v\in\Sigma_F}\psi_v(t_v)$ for $t=(t_v)_v\in\mathbb{A}_F.$ For $v\in \Sigma_F,$ let $dt_v$ be the additive Haar measure on $F_v,$ self-dual relative to $\psi_v.$ Then $dt=\prod_{v\in\Sigma_F}dt_v$ is the standard Tamagawa measure on $\mathbb{A}_F$. Let $d^{\times}t_v=\zeta_{F_v}(1)dt_v/|t_v|_v,$ where $\zeta_{F_v}(\cdot)$ is the local Dedekind zeta factor. In particular, $\Vol(\mathcal{O}_v^{\times},d^{\times}t_v)=\Vol(\mathcal{O}_v,dt_v)=N_{F_v}(\mathfrak{D}_{F_v})^{-1/2}$ for all finite place $v.$ Moreover, $\Vol(F\backslash\mathbb{A}_F; dt_v)=1$ and $\Vol(F\backslash\mathbb{A}_F^{(1)},d^{\times}t)=\underset{s=1}{\Res}\ \zeta_F(s),$ where $\mathbb{A}_F^{(1)}$ is the subgroup of ideles $\mathbb{A}_F^{\times}$ with norm $1,$ and $\zeta_F(s)=\prod_{v<\infty}\zeta_{F_v}(s)$ is the finite Dedekind zeta function. Denote by $\widehat{F^{\times}\backslash\mathbb{A}_F^{(1)}}$  the Pontryagin dual of $F^{\times}\backslash\mathbb{A}_F^{(1)}.$

\subsubsection{Reductive Groups}
For an algebraic group $H$ over $F$, we will denote by $[H]:=H(F)\backslash H(\mathbb{A}_F).$ We equip measures on $H(\mathbb{A}_F)$ as follows: for each unipotent group $U$ of $H,$ we equip $U(\mathbb{A}_F)$ with the Haar measure such that, $U(F)$ being equipped with the counting measure and the measure of $[U]$ is $1.$ We equip the maximal compact subgroup $K$ of $H(\mathbb{A}_F)$ with the Haar measure such that $K$ has total mass $1.$ When $H$ is split, we also equip the maximal split torus of $H$ with Tamagawa measure induced from that of $\mathbb{A}_F^{\times}.$

In this paper we set  $A=\diag(\mathrm{GL}(1),1),$ and $G=\mathrm{GL}(2).$ Let $B$ be the group of upper triangular matrices in $G$.  
Let $\overline{G}=Z\backslash G$ and $B_0=Z\backslash B,$ where $Z$ is the center of $G.$ Let $T_B$ be the diagonal subgroup of $B$. Then $A\simeq Z\backslash T_B.$ Let $N$ be the unipotent radical of $B$. Let $K=\otimes_vK_v$ be a maximal compact subgroup of $G(\mathbb{A}_F),$ where $K_v=\mathrm{U}_2(\mathbb{C})$ is $v$ is complex, $K_v=\mathrm{O}_2(\mathbb{R})$ if $v$ is real, and $K_v=G(\mathcal{O}_v)$ if $v<\infty.$ For $v\in \Sigma_{F,\fin},$ $m\in\mathbb{Z}_{\geq 0},$ define 
\begin{equation}\label{2.1}
K_v[m]:=\Big\{\begin{pmatrix}
a&b\\
c&d
\end{pmatrix}\in G(\mathcal{O}_v):\ c\in \mathfrak{p}_v^{m}\Big\}.
\end{equation}

\subsubsection{Automorphic Data}
Let $\textbf{s}=(s_1, s_2)\in\mathbb{C}^2.$ 
Let $\omega\in \widehat{F^{\times}\backslash\mathbb{A}_F^{(1)}}.$ Denote by $\mathcal{A}_0\left([G],\omega\right)$ the set of cuspidal representations on $G(\mathbb{A}_F)$ with central character $\omega.$ 

For $\eta_1, \eta_2\in \widehat{F^{\times}\backslash\mathbb{A}_F^{(1)}},$ let $\Ind(\eta_1\otimes\eta_2)$ be the unitary parabolic induction from $B(\mathbb{A}_F)$ to $G(\mathbb{A}_F)$ associated with $\eta_1\otimes\eta_2,$ and let   $\eta_1\boxplus\eta_2$ be Langlands sum. 

\subsubsection{Other Conventions}\label{sec1.5.4}
For a function $h$ on $G(\mathbb{A}_F),$ we define $h^*$ by assigning $h^*(g)=\overline{h({g}^{-1})},$ $g\in G(\mathbb{A}_F).$ Let $F_1(s), F_2(s)$ be two meromorphic functions. Write $F_1(s)\sim F_2(s)$ if there exists an \textit{entire} function $E(s)$ such that $F_1(s)=E(s)F_2(s).$ Denote by $\alpha\asymp \beta$ for $\alpha, \beta \in\mathbb{R}$ if there are absolute constants $c$ and $C$ such that $c\beta\leq \alpha\leq C\beta.$

Throughout the paper, we adhere to the $\varepsilon$-convention, wherein $\varepsilon$ denotes a positive number that can be chosen arbitrarily small, though it may vary between different instances.

\textbf{Acknowledgements}
I am grateful to Dinakar Ramakrishnan for his helpful discussions. I would also like to extend my thanks to Caltech for their warm hospitality during my visit, where this paper was written.

\section{Choice of the Test Function}\label{sec2}
The notations introduced in this section will be extensively utilized throughout the remainder of this paper.
\subsection{Intrinsic Data}\label{sec2.1}
Let $F$ be a number field. Let $\chi=\otimes_v\chi_v$ and $\omega=\otimes_v\omega_v$ be primitive unitary Hecke characters of $F^{\times}\backslash\mathbb{A}_F^{\times}$. Let $\mathfrak{M}$ be an integral ideal of norm $|\mathfrak{M}|:=N_F(\mathfrak{M}).$

\subsubsection{Analytic Conductor of Hecke Characters}\label{2.1.1}
Let $C(\chi):=\otimes_{v\in \Sigma_F}C_v(\chi)$ be the \textit{analytic} conductor of $\chi,$ where each local conductor $C_v(\chi)$ is defined as follows. 
\begin{itemize}
	\item For $F_v\simeq \mathbb{R},$ $\chi_v=\sgn^{n_v'}|\cdot|^{i\kappa_v},$ $n_v'\in\{0,1\},$ we define 
\begin{align*}
C_v(\chi)=1+\Big|\frac{n_v'+i\kappa_v}{2}\Big|.
\end{align*}
\item For $F_v\simeq \mathbb{C},$ and $\chi_v(a)=(a/|a|)^{n_v'}|a|^{2i\kappa_v},$ $a\in F_v^{\times},$ we define
\begin{align*}
C_v(\chi):=(1+|i\kappa_v+|n_v'|/2|)^2.
\end{align*}

\item For $v<\infty,$ let $n_v$ the exponent of $\chi_v,$ namely, $r_{\chi_v}$ is the smallest nonnegative integer such that $\chi_v$ is trivial over $1+\varpi_v^{r_{\chi_v}}\mathcal{O}_v^{\times}$ but not over $1+\varpi_v^{r_{\chi_v}-1}\mathcal{O}_v^{\times}.$ Let $C_v(\chi)=q_v^{r_{\chi_v}}.$ 
\end{itemize}

Denote by $C_{\infty}(\chi):=\otimes_{v\mid\infty}C_v(\chi)$ and $C_{\fin}(\chi):=\otimes_{v<\infty}C_v(\chi).$

\subsubsection{Analytic Conductor of Automorphic Representations of $\mathrm{GL}(2)/F$}
Let $\pi=\otimes_{v}\pi_v$ be an automorphic representation of $G(\mathbb{A}_F)$ with central character $\omega_{\pi}=\omega=\otimes_v\omega_v.$  Let $C(\pi):=\otimes_vC_v(\pi)$ be the analytic conductor of $\pi,$ where each local conductor $C_v(\pi)$ is defined as follows.
\begin{itemize}
	\item Let $v<\infty$. We denote by $r_{\pi_v}\geq 0$ the exponent of $\pi_v$, which is the least integer such that $\pi_v$ has a vector that is $K_{v}[r_{\pi_v}]$-invariant (as defined in \eqref{2.1}). The local conductor of $\pi_v$ is defined as $C_{v}(\pi):=q_v^{r_{\pi_v}}$.
	\item For $v\mid\infty$, the local $L$-function of $\pi_v$ can be expressed as a product of shifted Gamma factors, given by $L_v(s,\pi_v)=\Gamma_v(s+\beta_{1,v})\Gamma_v(s+\beta_{2,v}),$ where $\beta_{1,v}, \beta_{2,v}\in\mathbb{C},$ and $\Gamma_v$ represents the Gamma function over $F_v$. Let
	\begin{align*}
		C_v(\pi):=[(1+|\beta_{1,v}|)(1+|\beta_{2,v}|)]^{[F_v:\mathbb{R}]}.
	\end{align*}  
\end{itemize}  

 Let $C_{\fin}(\pi)=\prod_{v<\infty}C_v(\pi)$ be the arithmetic conductor of $\pi$ and let $C_{\infty}(\pi)=\prod_{v\mid\infty}C_v(\pi)$ be the archimedean conductor of $\pi.$ 
 
\subsubsection{Uniform Parameter Growth}\label{2.1.2}
Let $\Pi_{\infty}=\otimes_{v\mid\infty}\Pi_v$  be an irreducible admissible generic representation of $\mathrm{GL}(2)/F_{\infty}.$  For $v\mid\infty,$ let $L_v(s,\Pi_v)=\Gamma_v(s+\gamma_{1,v})\Gamma_v(s+\gamma_{2,v})$ be the associated $L$-factor of $\Pi_{v}.$ 
\begin{defn}\label{hy}
For $v\mid\infty,$ we say that $\Pi_v$ has \textit{uniform parameter growth of size $(T_v;c_v,C_v)$} for some constants $c_v$ and $C_v,$ and parameters $T_v,$ if $c_vT_v\leq |\gamma_{j,v}|\leq C_vT_v.$
\end{defn}

\subsubsection{Ramification Parameters}\label{2.1.5} 
For $v\in\Sigma_{F,\fin},$ let $e_v(\cdot)$ be the normalized evaluation of $F_v$ such that $e_v(\varpi_v)=1.$ Following the notation in \textsection\ref{2.1.1}, let  $r_{\chi_v}$ (resp. $r_{\omega_v}$) be the exponent of $\chi_v$ (resp. $\omega_v$). We set $m_v:=e_v(\mathfrak{M})$ and $n_v:=r_{\chi_v}.$ Let $\Sigma_{\Ram}^+:=\{v\in \Sigma_{F_{\fin}}:\ m_v\geq n_v\geq 1\},$ and $\Sigma_{\Ram}^-:=\{v\in \Sigma_{F_{\fin}}:\ m_v< n_v,\ n_v\geq 1\}.$ Let $K_v[m_v]$ and $K_v[n_v]$ be defined by \eqref{2.1}.

Denote by $\mathfrak{Q}=\prod_{v<\infty}\mathfrak{p}_v^{n_v}.$ For simplicity we write $Q=C_{\fin}(\chi),$ $M=|\mathfrak{M}|:=N_F(\mathfrak{M}),$ and $M'=C(\omega_{\fin}).$  Suppose that $Q>1.$ Note that $M'\mid M.$

\subsubsection{The Family of Automorphic Forms}\label{auto}
Let $c_v$ and $C_v$ be positive constants for each $v\mid\infty$, and let $T_v>0$. In this paper, we will vary $T_v$ as needed, while keeping $c_v$ and $C_v$ fixed. Let $T=\prod_{v\mid\infty} T_v.$

For $v\mid\infty,$ let $\Pi_v$ be an irreducible admissible generic representation of $\mathrm{GL}(2)/F_v,$ which uniform parameter growth of size $(T_v;c_v,C_v),$ cf. \textsection\ref{2.1.2}.

\begin{itemize}
	\item Let $\mathcal{A}_0(\Pi_{\infty},\mathfrak{M};\chi_{\infty},\omega)$ be the set of cuspidal automorphic representations $\pi=\otimes_v\pi_v$ of $\mathrm{GL}(2)/F$ such that 
\begin{itemize}
	\item $\pi$ has central character $\omega,$ 
	\item for all $v<\infty,$ $\pi_{v}$ has a $K_v[m_v]$-invariant vector, i.e., $r_{\pi_v}\leq e_v(\mathfrak{M}).$ 
	\item $\pi_{v}\otimes\chi_{v}\simeq \Pi_v$ at each $v\mid\infty.$
\end{itemize}

Note that Weyl law yields $\#\mathcal{A}_0(\Pi_{\infty},\mathfrak{M};\chi_{\infty},\omega)=(T|\mathfrak{M}|)^{1+o(1)}.$
\item Let $\mathcal{X}_0(\Pi_{\infty},\mathfrak{M};\chi_{\infty},\omega)$ be the set of Hecke characters $\eta=\otimes_v\eta_v\in \widehat{F^{\times}\backslash\mathbb{A}_F^{(1)}}$ such that 
\begin{itemize}
	\item for all $v<\infty,$ the representation $\eta_{v}\boxplus \omega_v\overline{\eta}_v$ has a $K_v[m_v]$-invariant vector, i.e., $r_{\eta_v}+r_{\omega_v\overline{\eta}_v}\leq m_v,$
	\item $\eta_{v}\chi_v\boxplus \omega_v\overline{\eta}_v\chi_v\simeq \Pi_v$ at each $v\mid\infty.$
\end{itemize}

\end{itemize}

\subsubsection{Other Notations}
For a function $h$ on $G(\mathbb{A}_F)$ or $G(F_v),$ $v\in\Sigma_F,$ define $h^*(g)=\overline{h(g^{-1})}$ and 
\begin{equation}\label{3..}
(h*h^*)(g)=\int h(gg'^{-1})h^*(g')dg'=\int h(gg')\overline{h(g')}dg'.
\end{equation}

\subsection{Construction of Test Functions}\label{3.2}

We construct a test function $f$ on $G(\mathbb{A}_F)$ using the following procedure:
\begin{itemize}
	\item For the archimedean places (cf. \textsection \ref{3.2.1}), we rely on Nelson's work \cite{Nel20} (cf. \textsection1.5.2 and \textsection14 on p.80) and follow the approach described in \cite{NV21}, \textsection1.10. Additional information can be found in \cite{Nel21}, Part 2. 
	
	\item For the finite places, we employ the test function constructed in \cite[\textsection 2.2]{Yan23c}, which involves a double average over unipotent translations weighted by characters (cf. \textsection\ref{2.2.2}).
\end{itemize}

\subsubsection{Construction of ${f}_{\infty}$}\label{3.2.1} 
Let $v\mid\infty.$ Recall that $\Pi_v$ has \textit{uniform parameter growth of size $(T_v;c_v,C_v)$} (cf. Definition \ref{hy} in \textsection\ref{2.1.2}). Then $\Pi_v$ has \textit{uniform parameter growth of size $(T_v;c_v/2,2C_v)$}, where $s_0$ is the parameter defined by \eqref{eq2.1} in \textsection\ref{2.1.5.}.

Let $\mathfrak{g}$ (resp. $\mathfrak{g}'$) be the Lie algebras of $G(F_v)$ (resp. $A(F_v)$),  with imaginal dual $\hat{\mathfrak{g}}$ (resp. $\hat{\mathfrak{g}}'$). One can choose an element $\tau\in\hat{\mathfrak{g}}$ with the restriction $\tau'=\tau\mid_{A}\in \hat{\mathfrak{g}}',$ so that $\tau$ (resp. $\tau'$) lies in the coadjoint orbit $\mathcal{O}_{\Pi_v}$ of $\Pi_v$ (resp. $\mathcal{O}_{\textbf{1}_v}$ of $\textbf{1}_v$ the trivial representation of $A(F_v)$). Let $\tilde{f}^{\wedge}_{v}:$ $\hat{\mathfrak{g}}\rightarrow\mathbb{C}$ be a smooth bump function concentrated on $\{\tau+(\xi,\xi^{\bot}):\ \xi\ll T_v^{\frac{1}{2}+\varepsilon},\ \xi^{\bot}\ll T_v^{\varepsilon}\},$ where $\xi$ lies in the tangent space of $\mathcal{O}_{\Pi_v}$ at $\tau,$ and $\xi^{\bot}$ has the normal direction. Let $\tilde{f}_{v}\in C_c^{\infty}(G(F_v))$ be the pushforward of the Fourier transform of $\tilde{f}_{v}^{\wedge}$ truncated at the essentially support, namely,  
\begin{equation}\label{245}
	\supp \tilde{f}_{v}\subseteq \big\{g\in G(F_v):\ g=I_{n+1}+O(T_v^{-\varepsilon}),\ \Ad^*(g)\tau=\tau+O(T_v^{-\frac{1}{2}+\varepsilon})\big\},
\end{equation}
where the implied constants rely on $c_v$ and $C_v.$ 

Then, in the sense of \cite{NV21}, \textsection {2.5}, the operator $\pi_{v}(\tilde{f}_{v})$ is approximately a rank one projector with range spanned by a unit vector microlocalized at $\tau.$ Let
\begin{equation}\label{3.}
f_{v}(g):=f_v(g,\chi_v)*f_v(g,\chi_v)^*,
\end{equation} 
where $v\mid\infty,$ $g\in G(F_v),$ and 
\begin{equation}\label{6.}
f_v(g,\chi_v):=\chi_v(\det g)\int_{Z(F_v)}\tilde{f}_{v}(zg)\omega_{v}(z)d^{\times}z.
\end{equation}

Due to the support of $\tilde{f}$, the function $f_v(g)$ is non-zero unless $|\det g|_v > 0$. Therefore, $\sgn(|\det g|_v) = 1$. As a result, the function $f_v$ is smooth on $G(F _v)$.

\subsubsection{Application of Transversality}\label{2.2.2.}
By definition, one has (cf. (14.13) in \cite{Nel21}) 
\begin{equation}\label{250}
	\|\tilde{f}_{v}\|_{\infty}\ll_{\varepsilon} T_v^{1+\varepsilon}, \ \ v\mid\infty,
\end{equation}
where $\|\cdot \|_{\infty}$ is the sup-norm. For $g\in\overline{G}(F_v),$ we may write 
\begin{align*}
g=\begin{pmatrix}
a&b\\
c&d
\end{pmatrix}\in G(F_v),\ \ \ g^{-1}=\begin{pmatrix}
a'&b'\\ 
c'&d'
\end{pmatrix}\in G(F_v).
\end{align*}
Define 
\begin{equation}\label{dg}	
d_{v}(g):=\begin{cases}
\min\big\{1, |d^{-1}b|_v+ |d^{-1}c|_v+ |d'^{-1}b'|_v+|d'^{-1}c'|_v \big\},&\ \text{if $dd'\neq 0,$}\\
1,&\ \text{if $dd'=0$}.
\end{cases}
\end{equation}
\begin{prop}[Theorem 15.1 of \cite{Nel20}]\label{prop3.1}
Let notation be as above. Then there is a fixed neighborhood $\mathcal{Z}$ of the identity in $A(F_v)$ with the following property. Let $g$ be in a small neighborhood of $I_{n+1}$ in $\overline{G}(F_v).$ Let $\delta_v>0$ be small. Then 
\begin{equation}\label{2.7..}
\Vol\left(\big\{z\in\mathcal{Z}:\ \dist(gz\tau, A(F_v)\tau)\leq \delta_v\big\}\right)\ll \frac{\delta_v}{d_v(g)}. 
\end{equation} 
Here $\dist(\cdots)$ denotes the infimum over $g'\in A(F_v)$ of $\|gz\tau-g'\tau\|,$ where $\|\cdot\|$ is a fixed norm on $\hat{\mathfrak{g}}.$
\end{prop}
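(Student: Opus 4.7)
My plan is a direct transversality computation for the one-parameter family $z\mapsto gz\tau$ inside $\hat{\mathfrak{g}}$. First I would identify $\hat{\mathfrak{g}}\cong \mathfrak{gl}_2(F_v)$ via the trace pairing. The hypothesis $\tau'=\tau|_{\mathfrak{a}}\in \mathcal{O}_{\mathbf{1}_v}$ amounts to $\tau'=0$ (since the coadjoint orbit of the trivial character of the abelian group $A(F_v)$ is the zero element of $\hat{\mathfrak{a}}$), which forces the $(1,1)$-entry of the matrix $\tau$ to vanish. Writing $\tau=\bigl(\begin{smallmatrix}0&\alpha\\\beta&\delta\end{smallmatrix}\bigr)$, the uniform parameter growth hypothesis ensures $|\alpha|,|\beta|\asymp T_v$. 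The orbit $A(F_v)\tau$ is explicitly the curve $u\mapsto \bigl(\begin{smallmatrix}0&u\alpha\\u^{-1}\beta&\delta\end{smallmatrix}\bigr)$, cut out by the three equations $\tr=\tr\tau$, $\det=\det\tau$, and $(1,1)$-entry $=0$. Since trace and determinant are invariants of the full coadjoint action of $G$, they hold automatically at $gz\tau$, so the distance from $gz\tau$ to $A(F_v)\tau$ is controlled, up to a bounded factor depending only on $\|\cdot\|$, by $|(gz\tau)_{11}|$. This reduces the claim to a one-dimensional sublevel-set estimate.

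Next, writing $g=\bigl(\begin{smallmatrix}a&b\\c&d\end{smallmatrix}\bigr)$ with $a,d$ close to $1$ and $b,c$ small, and parametrizing $\mathcal{Z}$ by $u\in F_v$ near $1$ via $z=\diag(u,1)$, a straightforward matrix computation gives
\begin{equation*}
(gz\tau z^{-1}g^{-1})_{11}=(ad-bc)^{-1}\bigl(bd u^{-1}\beta-acu\alpha-bc\delta\bigr)=:\Phi(u),
\end{equation*}
whose derivative is $\Phi'(u)=(ad-bc)^{-1}(-bdu^{-2}\beta-ac\alpha)$. For $g$ in a small neighborhood of $I$ we have $|ad-bc|\asymp 1$ and, computing $g^{-1}$ explicitly, $d_v(g)\asymp |b|+|c|$. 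On the complement of a neighborhood of the unique critical point of $\Phi$ in $\mathcal{Z}$, the lower bound $|\Phi'(u)|\gtrsim T_v(|b|+|c|)\geq d_v(g)$ holds (using $T_v\geq 1$), so the classical one-variable slicing inequality gives $\Vol\{u\in\mathcal{Z}:|\Phi(u)|\leq\delta_v\}\ll \delta_v/\inf|\Phi'|\leq \delta_v/d_v(g)$.

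It remains to handle the degenerate case where $\Phi'$ vanishes at some $u_0\in\mathcal{Z}$. The critical-point equation $u_0^2=-bd\beta/(ac\alpha)$ together with $|u_0|\asymp 1$ forces $|b|\asymp|c|$, so $d_v(g)\asymp|b|$; substituting $acu_0\alpha=-bdu_0^{-1}\beta$ into $\Phi$ yields $|\Phi(u_0)|\asymp |b|T_v$. Therefore either $|b|T_v\gg\delta_v$ and the sublevel set is empty, or $|b|T_v\lesssim\delta_v$, in which case the trivial bound $\Vol(\mathcal{Z})=O(1)\leq \delta_v/(T_v d_v(g))\leq\delta_v/d_v(g)$ already suffices. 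The principal obstacle is bookkeeping the various $T_v$-dependent scalings uniformly as $g$ varies in a small neighborhood of $I$ independent of $T_v$, and choosing $\mathcal{Z}$ so that the degenerate and nondegenerate regimes match cleanly at their boundary; \cite[Theorem~15.1]{Nel20} achieves this uniformly by working in coordinates on $\hat{\mathfrak{g}}$ adapted to the orbit geometry (aligning $A\tau$ with a coordinate axis), which sidesteps the piecewise case analysis sketched above.
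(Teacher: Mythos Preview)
The paper does not supply its own proof here; the proposition is quoted verbatim as \cite[Theorem~15.1]{Nel20}, so there is no argument in the paper to compare against directly. Your explicit $\GL(2)$ computation is a legitimate and considerably more elementary substitute for Nelson's general-rank orbit-geometric argument. The key reduction $\{\dist(gz\tau,A\tau)\leq\delta_v\}\subseteq\{|(gz\tau)_{11}|\ll\delta_v\}$ is immediate (every point of $A\tau$ has vanishing $(1,1)$-entry, and entrywise evaluation is dominated by any fixed norm), after which the one-variable sublevel problem for $\Phi(u)=(ad-bc)^{-1}(bdu^{-1}\beta-acu\alpha-bc\delta)$ is handled correctly by the first-derivative bound $|\Phi'|\asymp T_v(|b|+|c|)$ off the critical locus together with the value $|\Phi(u_0)|\asymp|b|T_v$ at the critical point.

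Two points deserve tightening. First, the input $|\alpha|,|\beta|\asymp T_v$ is not a consequence of uniform parameter growth alone---that hypothesis only forces $|\alpha\beta|\asymp T_v^2$---but rather of the specific normalization of $\tau$ within its $A$-orbit adopted in Nelson's construction; without this balance the implied constant in \eqref{2.7..} would acquire $T_v$-dependence, so you should state it as an explicit hypothesis on $\tau$. Second, in your degenerate branch the assertion ``the sublevel set is empty'' is justified only on a fixed neighborhood of $u_0$ (where Taylor expansion gives $|\Phi|\geq|\Phi(u_0)|/2$, since $\Phi(u_0)/\Phi''(u_0)\approx u_0^2\asymp 1$); on the complement of that neighborhood one must still invoke the non-degenerate contribution $O(\delta_v/(T_v d_v(g)))$, after which the two pieces combine to the claimed bound. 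Neither issue is a genuine gap.
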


Proposition \ref{prop3.1} (with $\delta_v=T_v^{-1/2+\varepsilon}$) will be used to detect the restriction $\Ad^*(g)\tau=\tau+O(T_v^{-\frac{1}{2}+\varepsilon})$ in the support of $\tilde{f}_v.$ By \eqref{245}, \eqref{250}, and \eqref{2.7..}, 
\begin{equation}\label{2.7}
|\tilde{f}_v(g)|\ll T^{1+\varepsilon}\cdot\textbf{1}_{|\Ad^*(g)\tau-\tau|\ll T_v^{-1/2+\varepsilon}}\cdot\textbf{1}_{|g-I_2|\ll T_v^{-\varepsilon}}\cdot \Big\{1,\frac{T_v^{-1/2+\varepsilon}}{d_v(g)}\Big\}.
\end{equation}

\subsubsection{Finite Places}\label{2.2.2}
For $v\in\Sigma_{F,\fin},$ we define a function on $G(F_v),$ supported on $Z(F_v)\backslash K_{v}[m_v],$ by  
\begin{equation}\label{5.}
f_{v}(z_vk_v;\omega_v)={\Vol(\overline{K_{v}[m_v]})^{-1}}\omega_v(z_v)^{-1}\omega_v(E_{2,2}(k_v))^{-1},
\end{equation}
where $\overline{K_v[m_v]}$ is the image of $K_v[m_v]$ in $\overline{G}(F_v),$ and $E_{2,2}(k_v)$ is the $(2,2)$-th entry of $k_v\in K_v[m_v].$ For $g_v\in G(F_v),$ define by  
\begin{align*}
f_v(g_v)=\frac{1}{|\tau(\chi_v)|^{2}}\sum_{\alpha\in (\mathcal{O}_v/\varpi_v^{n_v}\mathcal{O}_v)^{\times}}\sum_{\beta\in (\mathcal{O}_v/\varpi_v^{n_v}\mathcal{O}_v)^{\times}}\chi_v(\alpha)\overline{\chi}_v(\beta)f_{v}\left(g_{\alpha,\beta,v};\omega_v\right),
\end{align*}
where 
$$
\tau(\chi_v)=\sum_{\alpha\in (\mathcal{O}_v/\varpi_v^{n_v}\mathcal{O}_v)^{\times}}\psi_v(\alpha\varpi_v^{-n_v})\chi_v(\alpha)
$$ 
is the Gauss sum relative to the additive character $\psi_v$, and 
\begin{align*}
	g_{\alpha,\beta,v}:=\begin{pmatrix}
		1&\alpha \varpi_v^{-n_v}\\
		&1
	\end{pmatrix}g_v\begin{pmatrix}
		1&\beta \varpi_v^{-n_v}\\
		&1
	\end{pmatrix}. 
\end{align*}
 
Note that $n_v=0$ for almost all $v\in\Sigma_{F,\fin}.$ Hence, for all but finitely many $v\in\Sigma_{F,\fin},$ the test function $f_v(\cdot)=f_v(\cdot;\omega_v)$ (cf. \eqref{5.}) supports in $Z(F_v)\backslash K_{v}[m_v].$

\subsubsection{Construction of the Test Function}\label{3.2.5}
Let $f=\otimes_{v\in\Sigma_F}f_v,$ where $f_v$ is constructed in \textsection\ref{3.2.1} and \textsection\ref{2.2.2}. Note that $f_{\infty}$ is determined by $\Pi_{\infty}.$ 

\section{The Regularized Relative Trace Formula}\label{Sec3}

\subsection{Fourier Expansion of the Kernel Function}\label{2.3}
Let $f=\otimes_vf_v$ be defined in \textsection\ref{3.2.5}. Then $f$ defines an integral operator 
\begin{equation}\label{p}
	R(f)\phi(g)=\int_{\overline{G}(\mathbb{A}_F)}f(g')\phi(gg')dg'
\end{equation}
on the space $L^2\left([G],\omega\right)$ of functions on $[G]$ which transform under $Z(\mathbb{A}_F)$ by $\omega$ and are square integrable on $[\overline{G}].$ This operator is represented by the kernel function
\begin{equation}\label{11}
	\K(g_1,g_2)=\sum_{\gamma\in \overline{G}(F)}f(g_1^{-1}\gamma g_2),\ \ g_1, g_2\in G(\mathbb{A}_F).
\end{equation}

It is well known that $L^2\left([G],\omega\right)$ decomposes into the direct sum of the space $L_0^2\left([G],\omega\right)$ of cusp forms and spaces $L_{\Eis}^2\left([G],\omega\right)$ and $L_{\Res}^2\left([G],\omega\right)$ defined using Eisenstein series and residues of Eisenstein series respectively. Then
\begin{equation}\label{ker}
	\K_0(g_1,g_2)+\K_{\ER}(g_1,g_2)=\K(g_1,g_2)=\sum_{\gamma\in \overline{G}(F)}f(g_1^{-1}\gamma g_2),
\end{equation}
where $\K_{0}(g_1,g_2)$ (resp. $\K_{\ER}(g_1,g_2)$) is the contribution from the cuspidal (resp. non-cuspidal) spectrum. Explicit expansions of $\K_0(g_1,g_2)$ and $\K_{\ER}(g_1,g_2)$ will be given in \textsection\ref{sec3.1}. 

By Bruhat decomposition $\K(g_1,g_2)=\K_{\sm}(g_1,g_2)+\K_{\bi}(g_1, g_2),$ where 
	\begin{align*}
		\K_{\sm}(g_1,g_2)=\sum_{\gamma\in B_0(F)}f(g_1^{-1}\gamma g_2),\quad \K_{\bi}(g_1,g_2)=\sum_{\gamma\in B_0(F)wN(F)}f(g_1^{-1}\gamma g_2).
	\end{align*}
Let $\mathcal{K}(\cdot,\cdot)\in \{\K(\cdot,\cdot), \K_0(\cdot,\cdot), \K_{\ER}(\cdot,\cdot), \K_{\sm}(\cdot,\cdot),\K_{\bi}(\cdot,\cdot)\}.$ Define 
\begin{align*} 
\mathcal{F}_0\mathcal{F}_1\mathcal{K}(g_1,g_2):=&\int_{[N]}\mathcal{K}(g_1,u_2g_2)du_2,\ \ \mathcal{F}_1\mathcal{F}_0\mathcal{K}(g_1,g_2):=\int_{[N]}\mathcal{K}(u_1g_1,g_2)du_1,\\
\mathcal{F}_1\mathcal{F}_1\mathcal{K}(g_1,g_2):=&\int_{[N]}\int_{[N]}\mathcal{K}(u_1g_1,u_2g_2)du_2du_1,\\
\mathcal{F}_2\mathcal{F}_2\K(g_1,g_2):=&\sum_{\alpha\in A(F)}\sum_{\beta\in A(F)}\int_{[N]}\int_{[N]}\K(u_1\alpha g_1,u_2\beta g_2)\theta(u_1)\overline{\theta}(u_2)du_2du_1.
\end{align*}
Using Poisson summation twice the integral $\mathcal{F}_2\mathcal{F}_2\mathcal{K}(g_1,g_2)$ is equal to 
\begin{equation}\label{8}
\mathcal{K}(g_1,g_2)-\mathcal{F}_0\mathcal{F}_1\mathcal{K}(g_1,g_2)-\mathcal{F}_1\mathcal{F}_0\mathcal{K}(g_1,g_2)+\mathcal{F}_1\mathcal{F}_1\mathcal{K}(g_1,g_2).
\end{equation}

By \cite[Lemma 2.3]{Yan23c} we have, for $x, y\in A(\mathbb{A}_F),$ that 
\begin{align*}
\mathcal{F}_0\mathcal{F}_1\K_{\bi}(x,y)=\mathcal{F}_1\mathcal{F}_0\K_{\bi}(x,y)=\mathcal{F}_1\mathcal{F}_1\K_{\bi}(x,y)\equiv 0.
\end{align*}
Along with \eqref{8} we then obtain that 
\begin{equation}\label{3.5}
\mathcal{F}_2\mathcal{F}_2\K_{\bi}(x,y)=\K_{\bi}(x,y).
\end{equation}

\begin{remark}
Note that \eqref{3.5} only holds over $(x,y)\in A(\mathbb{A}_F)\times A(\mathbb{A}_F).$
\end{remark}

\subsection{The Relative Trace Formula}\label{sec3.7}
\subsubsection{The Spectral Side}
Let $\Re(s_1)\gg 1$ and $\Re(s_2)\gg 1.$ Define
\begin{equation}\label{11}
J_{\Spec}^{\Reg}(f,\textbf{s},\chi):=J_{0}^{\Reg}(f,\textbf{s},\chi)+J_{\Eis}^{\Reg}(f,\textbf{s},\chi),
\end{equation}
the spectral side, where $\textbf{s}=(s_1, s_2)\in\mathbb{C}^2,$ and \begin{align*}
J_{0}^{\Reg}(f,\textbf{s},\chi):=&\int_{[A]}\int_{[A]}\K_0(x,y)|\det x|^{s_1}|\det y|^{s_2}\chi(x)\overline{\chi}(y)d^{\times}xd^{\times}y,\\
J_{\Eis}^{\Reg}(f,\textbf{s},\chi):=&\int_{[A]}\int_{[A]}\mathcal{F}_2\mathcal{F}_2\K_{\ER}(x,y)|\det x|^{s_1}|\det y|^{s_2}\chi(x)\overline{\chi}(y)d^{\times}xd^{\times}y.
\end{align*}

By Proposition 6.4 in \cite{Yan23a} (cf. \textsection 6.2), the integral $J_{\Spec}^{\Reg}(f,\textbf{s},\chi)$ converges absolutely in $\Re(s_1), \Re(s_2)\gg 1.$ In addition, $J_{\Spec}^{\Reg}(f,\textbf{s},\chi)$ admits a holomorphic continuation $J_{\Spec}^{\Reg,\heartsuit}(f,\textbf{s},\chi)$ to $\mathbf{s}\in\mathbb{C}^2.$ We will see in \textsection\ref{sec3} that  $J_{\Spec}^{\Reg,\heartsuit}(f,\textbf{s},\chi)$ is roughly an average of $L(1/2+s_1,\pi\times\chi)L(1/2+s_2,\widetilde{\pi}\times\overline{\chi})$ as $\pi$ varies over families of  unitary automorphic representations of $\mathrm{GL}(2)/F.$ 

\subsubsection{The Geometric Side}\label{sec3.2.2}
By \eqref{3.5} and the decomposition $\K(x,y)=\K_{\sm}(x,y)+\K_{\bi}(x,y),$ the geometric side is 
\begin{equation}\label{2.17}
J_{\Geo}^{\Reg}(f,\textbf{s},\chi):=J^{\Reg}_{\Geo,\sm}(f,\textbf{s},\chi)+J^{\Reg}_{\Geo,\bi}(f,\textbf{s},\chi),
\end{equation}
where $\Re(s_1)\gg 1,$ $\Re(s_2)\gg 1,$ and 
\begin{align*}
J^{\Reg}_{\Geo,\sm}(f,\textbf{s},\chi):=&\int_{[A]}\int_{[A]}\mathcal{F}_2\mathcal{F}_2\K_{\sm}(x,y)|\det x|^{s_1}|\det y|^{s_2}\chi(x)\overline{\chi}(y)d^{\times}xd^{\times}y
,\\
J^{\Reg}_{\Geo,\bi}(f,\textbf{s},\chi):=&\int_{[A]}\int_{[A]}\K_{\bi}(x,y)|\det x|^{s_1}|\det y|^{s_2}\chi(x)\overline{\chi}(y)d^{\times}xd^{\times}y.
\end{align*}

As in \cite[\textsection 5]{Yan23a}, we have
\begin{equation}\label{15.}
J^{\Reg}_{\Geo,\bi}(f,\textbf{s},\chi)=J^{\Reg}_{\Geo,\du}(f,\textbf{s},\chi)+J^{\Reg,\RNum{2}}_{\Geo,\bi}(f,\textbf{s},\chi),	
\end{equation}
where $J_{\Geo,\du}^{\bi}(f,\textbf{s},\chi)$ is defined by 
\begin{equation}\label{3.9}
\int_{\mathbb{A}_F^{\times}}\int_{\mathbb{A}_F^{\times}}f\left(\begin{pmatrix}
	1\\
	x&1
\end{pmatrix}\begin{pmatrix}
	y\\
	&1
\end{pmatrix}\right)|x|^{s_1+s_2}|y|^{s_2}\overline{\chi}(y)d^{\times}yd^{\times}x,
\end{equation}
and the regular orbital $J^{\Reg,\RNum{2}}_{\Geo,\bi}(f,\textbf{s},\chi)$ is defined by 
\begin{equation}\label{17...} 
\sum_{t\in F-\{0,1\}}\int_{\mathbb{A}_F^{\times}}\int_{\mathbb{A}_F^{\times}}f\left(\begin{pmatrix}
	y&x^{-1}t\\
	xy&1
\end{pmatrix}\right)|x|^{s_1+s_2}|y|^{s_2}\overline{\chi}(y)d^{\times}yd^{\times}x.
\end{equation}
Note that \eqref{3.9} converges absolutely in $\Re(s_1+s_2)>1,$ and by \cite[Theorem 5.6]{Yan23a} the integral $J^{\Reg,\RNum{2}}_{\Geo,\bi}(f,\textbf{s},\chi)$ converges absolutely in $\mathbf{s}\in\mathbb{C}^2,$ and in particular, the sum over $t\in F-\{0,1\}$ is \textit{finite}, which is called \textit{stability} of the regular orbital integrals (cf. \cite{FW09}, \cite{MR12}, \cite[\textsection 6]{Yan23c}). Therefore, the geometric side $J_{\Geo}^{\Reg}(f,\textbf{s},\chi)$ admits a holomorphic  continuation $J_{\Geo}^{\Reg,\heartsuit}(f,\textbf{s},\chi)$ to $\mathbf{s}\in\mathbb{C}^2.$ We shall investigate it in \textsection \ref{sec4}-\textsection\ref{sec6}.


\subsubsection{The Regularized Relative Trace Formula}\label{rref}
Note that $\K_0(x,y)=\mathcal{F}_2\mathcal{F}_2\K_0(x,y),$ $\K_0(x,y)+\K_{\ER}(x,y)=\K(x,y)=\K_{\sm}(x,y)+\K_{\bi}(x,y).$ Then by \eqref{3.5},
\begin{align*}
\K_0(x,y)+\mathcal{F}_2\mathcal{F}_2\K_{\ER}(x,y)=\mathcal{F}_2\mathcal{F}_2\K(x,y)=\mathcal{F}_2\mathcal{F}_2\K_{\sm}(x,y)+\K_{\bi}(x,y).
\end{align*}

As a consequence, when $\Re(s_1)\gg 1$ and $\Re(s_2)\gg 1,$
\begin{equation}\label{3.11}
J_{\Spec}^{\Reg}(f,\textbf{s},\chi)=J_{\Geo}^{\Reg}(f,\textbf{s},\chi).
\end{equation}

By applying the singularity matching process described in \cite[\textsection 7]{Yan23a}, the equality \eqref{3.11} extends to its holomorphic continuation, leading to the following equality between two holomorphic functions:

\begin{thm}[The Regularized \textbf{RTF}]\label{thm2.4}
Let notation be as before. Then 
\begin{equation}
J_{\Spec}^{\Reg,\heartsuit}(f,\textbf{s},\chi)=J_{\Geo}^{\Reg,\heartsuit}(f,\textbf{s},\chi)\tag{\ref{2.24}}.
\end{equation}
\end{thm}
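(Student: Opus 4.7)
The plan is to deduce Theorem \ref{thm2.4} from the raw identity \eqref{3.11} valid on the convergence domain $\{\Re(s_1)\gg 1,\ \Re(s_2)\gg 1\}$ by analytic continuation, but with care because the individual summands on each side are only \emph{meromorphic} in $\mathbf{s}$ and their poles have to be reconciled before one can invoke the identity theorem for holomorphic functions.

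First, I would revisit the kernel identity $\mathcal{F}_2\mathcal{F}_2\K(x,y)=\mathcal{F}_2\mathcal{F}_2\K_{\sm}(x,y)+\K_{\bi}(x,y)$ recorded just before \eqref{3.11}, together with the cuspidal identity $\K_0=\mathcal{F}_2\mathcal{F}_2\K_0$, to write
\begin{equation*}
\K_0(x,y)+\mathcal{F}_2\mathcal{F}_2\K_{\ER}(x,y)=\mathcal{F}_2\mathcal{F}_2\K_{\sm}(x,y)+\K_{\bi}(x,y),\qquad x,y\in A(\mathbb{A}_F).
\end{equation*}
Integrating against $|\det x|^{s_1}|\det y|^{s_2}\chi(x)\overline{\chi}(y)$ over $[A]\times[A]$ in the region where everything converges absolutely yields \eqref{3.11}. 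This step is essentially bookkeeping, but it is important that absolute convergence of each of the four constituents holds on the same open set $\{\Re(s_j)\gg 1\}$; this is guaranteed by \cite[Proposition~6.4]{Yan23a} on the spectral side and by the discussion preceding \eqref{15.} on the geometric side.

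Next, I would continue the two sides separately. On the spectral side, meromorphic continuation of $J_0^{\Reg}(f,\mathbf{s},\chi)$ and $J_{\Eis}^{\Reg}(f,\mathbf{s},\chi)$ proceeds by unfolding into standard and Rankin--Selberg type integrals; one gets expressions in terms of $L(1/2+s_1,\pi\times\chi)L(1/2+s_2,\widetilde{\pi}\times\overline{\chi})$ plus Eisenstein contributions whose poles come only from the Eisenstein intertwiners and from the $\zeta_F$-factors arising from the zero-th Fourier coefficient. On the geometric side, $J_{\Geo,\sm}^{\Reg}(f,\mathbf{s},\chi)$ produces polar divisors coming from integrating $|x|^{s_1+s_2}|y|^{s_2}$ against a constant term over $\mathbb{A}_F^{\times}$, and $J_{\Geo,\du}^{\Reg}(f,\mathbf{s},\chi)$ is given by the absolutely convergent integral \eqref{3.9} for $\Re(s_1+s_2)>1$ whose continuation again yields simple poles along explicit hyperplanes; the remaining piece $J_{\Geo,\bi}^{\Reg,\mathrm{II}}(f,\mathbf{s},\chi)$ is already entire in $\mathbf{s}$ by \cite[Theorem~5.6]{Yan23a}, thanks to stability (finiteness of the $t$-sum in \eqref{17...}).

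The heart of the argument, and the step I expect to be the main obstacle, is the \emph{singularity matching} of \cite[\S7]{Yan23a}: one has to show that the polar divisors of $J_0^{\Reg}+J_{\Eis}^{\Reg}$ along the hyperplanes $s_1=0$, $s_2=0$, $s_1+s_2=0$ etc.\ coincide (with matching residues) with those of $J_{\Geo,\sm}^{\Reg}+J_{\Geo,\du}^{\Reg}+J_{\Geo,\bi}^{\Reg,\mathrm{II}}$, so that after subtracting a common meromorphic function one obtains two \emph{holomorphic} functions $J_{\Spec}^{\Reg,\heartsuit}$ and $J_{\Geo}^{\Reg,\heartsuit}$ on $\mathbb{C}^2$. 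Concretely, one identifies the residue of $J_{\Eis}^{\Reg}$ along each pole hyperplane with the corresponding Tate-type residue coming from the $J_{\Geo,\sm}^{\Reg}$ and $J_{\Geo,\du}^{\Reg}$ terms; this amounts to a Poisson/Tate computation on $\mathbb{A}_F^{\times}$ analogous to the one carried out in \cite{Yan23a}. Once this matching is in place, the raw identity \eqref{3.11} implies equality of the two holomorphic continuations on the open set $\{\Re(s_j)\gg 1\}$, and the identity theorem for holomorphic functions on $\mathbb{C}^2$ (applied slice-wise or directly) promotes this to the equality \eqref{2.24} on all of $\mathbb{C}^2$, completing the proof.
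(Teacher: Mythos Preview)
Your proposal is correct and follows essentially the same route as the paper: establish the raw identity \eqref{3.11} on $\{\Re(s_j)\gg 1\}$, invoke the meromorphic/holomorphic continuation of both sides, carry out the singularity matching of \cite[\S7]{Yan23a}, and conclude by the identity theorem. The paper itself gives no more detail than a pointer to \cite[\S7]{Yan23a}, so your sketch is in fact more explicit than what appears in the text. One minor remark: in the paper's framing the spectral side $J_{\Spec}^{\Reg}$ and the geometric side $J_{\Geo}^{\Reg}$ are each shown to be \emph{separately} holomorphic on $\mathbb{C}^2$ (the poles of $J_{\Geo,\sm}^{\Reg}$ and $J_{\Geo,\du}^{\Reg}$ cancel internally, cf.\ the remark after Corollary~\ref{cor3.3}), so the cross-side residue comparison you describe is automatic once each side is known to be entire; but this is a difference of emphasis rather than of substance.
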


In this paper, our focus is on evaluating the above regularized \textbf{RTF} at $\mathbf{s} = \mathbf{0} = (0,0)$. Write $\mathbf{s}'=(s,0).$ Define the following normalized integrals
\begin{align*}
J^{\Reg}_{\Geo,\sm}(f,\chi):=&\Big[J^{\Reg}_{\Geo,\sm}(f,\textbf{s}',\chi)-s^{-1}\underset{s=0}{\Res}\ J^{\Reg}_{\Geo,\sm}(f,\textbf{s},\chi)\Big]_{s=0},\\
J^{\Reg}_{\Geo,\du}(f,\chi):=&\Big[J^{\Reg}_{\Geo,\du}(f,\textbf{s}',\chi)-s^{-1}\underset{s=0}{\Res}\ J^{\Reg}_{\Geo,\du}(f,\textbf{s},\chi)\Big]_{s=0}.
\end{align*}

Notice that $\underset{s=0}{\Res}\ J^{\Reg}_{\Geo,\sm}(f,\textbf{s},\chi)+\underset{s=0}{\Res}\ J^{\Reg}_{\Geo,\du}(f,\textbf{s},\chi)\equiv 0.$ Therefore,
\begin{equation}
J_{\Geo}^{\Reg,\heartsuit}(f,\textbf{0},\chi)=J^{\Reg}_{\Geo,\sm}(f,\chi)+J^{\Reg}_{\Geo,\du}(f,\chi)+J^{\Reg,\RNum{2}}_{\Geo,\bi}(f,\textbf{0},\chi)\tag{\ref{geom}}. 
\end{equation}

\begin{cor}\label{cor3.3}
Let notation be as before. Then 
\begin{align*}
J_{\Spec}^{\Reg,\heartsuit}(f,\textbf{0},\chi)=J^{\Reg}_{\Geo,\sm}(f,\chi)+J^{\Reg}_{\Geo,\du}(f,\chi)+J^{\Reg,\RNum{2}}_{\Geo,\bi}(f,\textbf{0},\chi).
\end{align*}
\end{cor}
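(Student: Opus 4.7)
The argument is formal: Corollary \ref{cor3.3} combines Theorem \ref{thm2.4} with the geometric identity \eqref{geom} already recorded in \textsection\ref{rref}. Specifically, Theorem \ref{thm2.4} furnishes
$$J_{\Spec}^{\Reg,\heartsuit}(f,\textbf{s},\chi) = J_{\Geo}^{\Reg,\heartsuit}(f,\textbf{s},\chi)$$
as an equality of entire functions on $\mathbb{C}^2$, so evaluation at $\textbf{s}=\textbf{0}$ is unambiguous. The task thus reduces to rewriting $J_{\Geo}^{\Reg,\heartsuit}(f,\textbf{0},\chi)$ as the three-term sum displayed in the statement.

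To achieve this I would restrict to the line $\textbf{s}'=(s,0)$ and apply the decomposition of the geometric side furnished by \eqref{2.17} and \eqref{15.}, producing
$$J^{\Reg}_{\Geo,\sm}(f,\textbf{s}',\chi)+J^{\Reg}_{\Geo,\du}(f,\textbf{s}',\chi)+J^{\Reg,\RNum{2}}_{\Geo,\bi}(f,\textbf{s}',\chi).$$
By \cite[Theorem 5.6]{Yan23a}, the regular orbital contribution is entire on $\mathbb{C}^2$, so its value at $s=0$ is simply $J^{\Reg,\RNum{2}}_{\Geo,\bi}(f,\textbf{0},\chi)$. The remaining two terms may carry simple poles at $s=0$; however, the singularity matching of \cite[\textsection 7]{Yan23a}, as invoked in \textsection\ref{rref}, delivers the residue cancellation
$$\underset{s=0}{\Res}\ J^{\Reg}_{\Geo,\sm}(f,\textbf{s},\chi)+\underset{s=0}{\Res}\ J^{\Reg}_{\Geo,\du}(f,\textbf{s},\chi)=0.$$

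Writing Laurent expansions $J^{\Reg}_{\Geo,\sm}(f,\textbf{s}',\chi)=r/s+J^{\Reg}_{\Geo,\sm}(f,\chi)+O(s)$ and $J^{\Reg}_{\Geo,\du}(f,\textbf{s}',\chi)=-r/s+J^{\Reg}_{\Geo,\du}(f,\chi)+O(s)$, with constant terms defined precisely by the normalized integrals of \textsection\ref{rref}, the sum of the polar parts cancels and the remainder is holomorphic at $s=0$ with limit $J^{\Reg}_{\Geo,\sm}(f,\chi)+J^{\Reg}_{\Geo,\du}(f,\chi)$. Adding in the regular contribution recovers \eqref{geom}, and substituting this into the identity of Theorem \ref{thm2.4} at $\textbf{s}=\textbf{0}$ yields the desired equality. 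The entire argument is bookkeeping; the substantive inputs, namely the holomorphic continuation packaged in Theorem \ref{thm2.4} and the residue cancellation itself, have already been secured by the singularity-matching machinery of \cite[\textsection 7]{Yan23a}, so no genuine obstacle remains at this stage.
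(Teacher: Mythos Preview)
Your proposal is correct and follows exactly the paper's approach: evaluate Theorem \ref{thm2.4} at $\textbf{s}=\textbf{0}$ and invoke the identity \eqref{geom}, which in turn rests on the residue cancellation $\underset{s=0}{\Res}\ J^{\Reg}_{\Geo,\sm}(f,\textbf{s},\chi)+\underset{s=0}{\Res}\ J^{\Reg}_{\Geo,\du}(f,\textbf{s},\chi)=0$ recorded in \textsection\ref{rref}. The paper presents Corollary \ref{cor3.3} without a separate proof for precisely this reason; your Laurent-expansion bookkeeping merely spells out what the paper leaves implicit.
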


\section{The Spectral Side: Meromorphic Continuation and Bounds}\label{sec3}
In this section we shall show that $J_{\Spec}^{\Reg,\heartsuit}(f,\textbf{s},\chi)$ admits a holomorphic continuation to $\mathbf{s}\in\mathbb{C}^2.$ Moreover, we derive a lower bound of it as follows. 
\begin{restatable}[]{thm}{thmf}\label{thm6}
Let notation be as in \textsection\ref{sec2}. Then 
\begin{align*}
J_{\Spec}^{\Reg,\heartsuit}(f,\textbf{0},\chi)\gg\ &T^{-\frac{1}{2}-\varepsilon}(MQ)^{-\varepsilon} \sum_{\pi\in\mathcal{A}_0(\Pi_{\infty},\mathfrak{M};\chi_{\infty},\omega)}|L(1/2,\pi\times\chi)|^2\\
&+T^{-\frac{1}{2}-\varepsilon}(MQ)^{-\varepsilon} \sum_{\eta}\int_{\mathbb{R}}\frac{|L(1/2+it,\eta\chi)L(1/2+it,\omega\overline{\eta}\chi)|^2}{|L(1+2it,\omega\overline{\eta}^2)|^2}dt,
\end{align*}
where $\eta\in \mathcal{X}_0(\Pi_{\infty},\mathfrak{M};\chi_{\infty},\omega),$ and the implied constant depends only on $F,$ $\varepsilon,$ $c_v$ and $C_v$ at $v\mid\infty$.  
\end{restatable}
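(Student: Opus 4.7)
The plan is to spectrally expand $J_{\Spec}^{\Reg,\heartsuit}(f,\mathbf{0},\chi)$ into a sum of nonnegative terms, each essentially $|L(1/2,\pi\times\chi)|^{2}$ (or its continuous-spectrum analogue) multiplied by an archimedean weight of size $\gg T^{-1/2-\varepsilon}$ and a finite-place weight of size $\gg (MQ)^{-\varepsilon}$, and then to read off the lower bound by discarding the representations outside the prescribed families. The key structural point is that the operator $R(f)$ is positive semidefinite by construction: at $v\mid\infty$ this is the $h*h^{*}$ identity \eqref{3.}, and at $v<\infty$ the Gauss-sum double average of \textsection\ref{2.2.2} has the same symmetric shape. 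Writing $f=f^{\sharp}*(f^{\sharp})^{*}$ with $f_{v}^{\sharp}=f_{v}(\cdot,\chi_{v})$ at the archimedean places and the one-sided Gauss-sum projector at the finite ones, inserting the spectral expansions of $\K_{0}$ and $\K_{\ER}$ into $J_{\Spec}^{\Reg}(f,\mathbf{s},\chi)$ and unfolding the two $[A]$-integrals produces, at $\mathbf{s}=\mathbf{0}$, a positive expression of the form
\begin{align*}
\sum_{\pi}\sum_{\varphi}\bigl|P_{\chi}\bigl(\pi(f^{\sharp})\varphi\bigr)\bigr|^{2}+(\text{Eisenstein analogue}),
\end{align*}
where $\varphi$ runs over an orthonormal basis of $\pi$ and $P_{\chi}(\phi):=\int_{[A]}\phi(x)\chi(x)d^{\times}x$ is the twisted $A$-period; by Rankin--Selberg, $|P_{\chi}(\varphi_{\mathrm{new}})|^{2}$ factors as $|L(1/2,\pi\times\chi)|^{2}$ times a product of local integrals.

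It then remains to lower-bound the local weights coming from $\pi_{v}(f_{v}^{\sharp})\varphi_{\mathrm{new}}$. At a finite place $v$, the Gauss-sum average of \textsection\ref{2.2.2} converts the $K_{v}[m_{v}]$-projector \eqref{5.} into a scalar multiple of the projection onto the new vector for $\pi_{v}\otimes\chi_{v}$; the normalisation $|\tau(\chi_{v})|^{-2}$ cancels the $q_{v}^{n_{v}}$ produced by the level-raising, leaving a net weight $\gg (MQ)^{-\varepsilon}$, and only $\pi$ with $r_{\pi_{v}}\leq m_{v}$ at every finite $v$ contribute, i.e. exactly $\pi\in\mathcal{A}_{0}(\Pi_{\infty},\mathfrak{M};\chi_{\infty},\omega)$. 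At $v\mid\infty$, the discussion surrounding \eqref{245} shows that $\pi_{v}(f_{v}(\cdot,\chi_{v}))$ is essentially a rank-one projector onto a unit vector $v_{\tau}$ microlocalised at $\tau\in\mathcal{O}_{\Pi_{v}}$, and the twist by $\chi_{v}(\det g)$ in \eqref{6.} aligns the $A$-period of $v_{\tau}$ with $\tau'=\tau\mid_{A}\in\mathcal{O}_{\textbf{1}_{v}}$; the resulting $A(F_{v})$-period has size $\gg T_{v}^{-1/4-\varepsilon}$, so squaring and taking the product over $v\mid\infty$ produces the global archimedean factor $T^{-1/2-\varepsilon}$.

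For the continuous spectrum, the same procedure applied to the unitary principal series $\Ind(\eta\otimes\omega\overline{\eta})$ yields, via the Hecke/Rankin--Selberg factorisation of the Eisenstein period, the ratio $L(1/2+it,\eta\chi)L(1/2+it,\omega\overline{\eta}\chi)/L(1+2it,\omega\overline{\eta}^{2})$ integrated over $t\in\mathbb{R}$ and summed over $\eta\in\mathcal{X}_{0}(\Pi_{\infty},\mathfrak{M};\chi_{\infty},\omega)$, the denominator coming from the Petersson norm of the Eisenstein vector. Positivity then allows one to drop the residual spectrum together with every spectral term outside the two families, which gives Theorem \ref{thm6}. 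The main obstacle will be the archimedean lower bound $\gg T_{v}^{-1/4-\varepsilon}$: this is a genuine lower bound, not merely an upper bound, and must be extracted from the essential support \eqref{245} of $\tilde{f}_{v}$ together with the transversality estimate of Proposition \ref{prop3.1} at scale $\delta_{v}=T_{v}^{-1/2+\varepsilon}$; the uniform parameter growth of $\Pi_{v}$ is what guarantees the requisite transversality of $\mathcal{O}_{\Pi_{v}}$ to the $A$-orbit on a scale comparable to $T_{v}$, ruling out destructive interference in the non-oscillatory pairing of $v_{\tau}$ with $\chi_{v}$ along $A(F_{v})$.
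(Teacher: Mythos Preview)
Your overall architecture is exactly the paper's: write $J_{\Spec}^{\Reg,\heartsuit}(f,\mathbf{0},\chi)$ as a sum of squares $\sum|\Psi(0,\phi^{\ddagger},\chi)|^{2}$ (this is Lemma~\ref{lem8}), factor each period via the Gauss-sum projector so that the finite part becomes $W_{\phi,v}(I_{2})L_{v}(1/2,\sigma_{v}\times\chi_{v})$ (Lemma~\ref{lem9}/\ref{lem10.}), and then lower-bound the archimedean period by $T_{v}^{-1/4-\varepsilon}$ before dropping extraneous spectral terms by positivity. Two points deserve correction.

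First, your proposed route to the archimedean lower bound is the wrong tool. Proposition~\ref{prop3.1} is a volume \emph{upper} bound, used in \eqref{2.7} to control orbital integrals on the geometric side; it cannot by itself produce a period \emph{lower} bound, since small volume of the coincidence set only tells you an integral is small, not that a specific integral is large. In the paper the estimate \eqref{37.}, namely $\int_{F_{v}^{\times}}(\pi_{v}(f_{v}^{\circ})(W_{v}\otimes\chi_{v}))\bigl(\begin{smallmatrix}x_{v}&\\&1\end{smallmatrix}\bigr)d^{\times}x_{v}\gg T_{v}^{-1/4-\varepsilon}$, is quoted directly from the microlocal calculus of \cite{Nel21}: the test function $\tilde{f}_{v}$ is built so that $\pi_{v}(\tilde{f}_{v})$ is approximately projection onto a unit vector microlocalised at $\tau$, and the condition $\tau'=\tau|_{A}\in\mathcal{O}_{\mathbf{1}_{v}}$ (imposed in \textsection\ref{3.2.1}) forces that vector to have $A$-period of the stated size. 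The transversality estimate enters only to show that this period is not too \emph{large}; the lower bound comes from the non-stationary-phase/main-term part of Nelson's analysis and is not a consequence of Proposition~\ref{prop3.1} and \eqref{245} alone.

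Second, at the finite places you assert a net weight $\gg(MQ)^{-\varepsilon}$ after the Gauss-sum normalisation. This is correct but not immediate: after Lemma~\ref{lem9} the finite contribution is $W_{\phi,\fin}(I_{2})\prod_{v<\infty}L_{v}(1/2,\sigma_{v}\times\chi_{v})$, and one still needs $|W_{\phi,\fin}(I_{2})|\gg(TM)^{-\varepsilon}$ for the $L^{2}$-normalised newvector. The paper invokes \cite{HL94} for this; you should do the same, since the naive normalisation comparison only gives a power loss in $M$.
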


\subsection{Spectral Side: Meromorphic Continuation}\label{sec3.4}
\subsubsection{Spectral Expansion of the kernel functions}\label{sec3.1}
Let notation be as in \textsection\ref{sec2}. Let $f=\otimes_vf_v$ be defined in \textsection \ref{3.2.5}.   Let $\K_0(x,y)$ and $\K_{\ER}(x,y)$ be defined by \eqref{ker} in \textsection \ref{2.3}. Then by the spectral decomposition we have (e.g., cf. \cite{Art79}) 
\begin{align*}
&\K_0(x,y)=\sum_{\sigma\in \mathcal{A}_0([G],\omega)}\sum_{\phi\in\mathfrak{B}_{\sigma}}\sigma(f)\phi(x)\overline{\phi(y)},\\
&\K_{\ER}(x,y)=\frac{1}{4\pi}\sum_{\eta\in \widehat{F^{\times}\backslash\mathbb{A}_F^{(1)}}}\int_{i\mathbb{R}}\sum_{\phi\in \mathfrak{B}_{\sigma_{0,\eta}}}E(x,\mathcal{I}(\lambda,f)\phi,\lambda)\overline{E(y,\phi,\lambda)}d\lambda.
\end{align*}
Here, $\mathfrak{B}_{\sigma}$ denotes an orthonormal basis of the cuspidal representation $\sigma$, and $\sigma_{0,\eta}$ is given by $\sigma_{0,\eta}=\Ind(\eta,\eta^{-1}\omega)$.

\subsubsection{Rankin-Selberg Periods}\label{sec3.3}
Let $\theta=\otimes_v\theta_v$ be the generic induced by the fixed additive character $\psi$ (cf. \textsection\ref{1.1.1}). For a generic automorphic form $\varphi$  on $G(\mathbb{A}_F),$ define the associated Whittaker function by 
\begin{equation}\label{whi}
W_{\varphi}(g):=\int_{[N]}\varphi(ug)\overline{\theta(u)}du, \ \ g\in G(\mathbb{A}_F),
\end{equation}
Using the multiplicity one property, we can express $W_{\varphi}(g)$ as a product over all places $v\in\Sigma_F$ as $W_{\varphi}(g)=\prod_{v\in\Sigma_F}W_{\varphi,v}(g_v)$, where $g=\otimes_vg_v\in G(\mathbb{A}_F)$. The local Whittaker function $W_{\varphi,v}$ is spherical for all but finitely many places $v\in\Sigma_F$. Define
\begin{align*}
\Psi(s,\varphi,\chi):=\int_{\mathbb{A}_F^{\times}}W_{\varphi}\left(\begin{pmatrix}
	x\\
	&1
\end{pmatrix}\right)|x|^s\chi(x)d^{\times}x=\prod_{v\in\Sigma_F}\Psi_v(s,\varphi,\chi),
\end{align*}
where the local integral is defined by 
\begin{align*}
\Psi_v(s,\varphi,\chi)=\int_{F_v^{\times}}W_{\varphi,v}\left(\begin{pmatrix}
	x_v\\
	&1
\end{pmatrix}\right)|x_v|_v^s\chi_v(x_v)d^{\times}x_v.
\end{align*}

The integral $\Psi(s,\varphi,\chi)$ converges absolutely in $\Re(s)>1.$ Furthermore, it is related to $L$-functions as follows. 
\begin{itemize}
	\item If $\varphi\in\mathfrak{B}_{\sigma},$ where $\sigma\in \mathcal{A}_0([G],\omega),$ then $\Psi(s,\varphi,\chi)$ converges absolutely for all $s\in\mathbb{C}$, making it an entire function. By Hecke's theory, $\Psi(s,\varphi,\chi)$ serves as the integral representation for the complete $L$-function $\Lambda(s+1/2,\sigma)$.

\item If $\varphi\in\mathfrak{B}_{0,\eta}$ associated with some $\eta\in\widehat{F^{\times}\backslash\mathbb{A}_F^{(1)}},$ then as established in \cite{Jac09}, the function $\Psi(s,\varphi,\chi)$ converges absolutely  in the region $\Re(s)_1\gg 1$ and $\Re(s_2)\gg 1$, and it has a meromorphic continuation to $s\in\mathbb{C}$, representing the complete $L$-function $\Lambda(s+1/2,\eta)\Lambda(s+1/2,\eta^{-1}\omega)$.
\end{itemize}

\subsubsection{Meromorphic Continuation}
According to the construction of the test function $f,$ the Eisenstein series $E(x,\mathcal{I}(\lambda,f)\phi,\lambda),$ $\phi\in\mathfrak{B}_{0,\eta},$ vanishes unless $\phi$ is right invariant under $K_v[m_v],$ where $m_v=e_v(\mathfrak{M}),$ cf.  \textsection\ref{2.1.5}.

Substituting the Rankin-Selberg periods (cf. \textsection\ref{sec3.3}) into the decomposition \eqref{11} we then obtain $J_{\Spec}^{\Reg}(f,\textbf{s},\chi)=J_{0}^{\Reg}(f,\textbf{s},\chi)+J_{\Eis}^{\Reg}(f,\textbf{s},\chi),$ where 
\begin{align*}
J_{0}^{\Reg}(f,\textbf{s},\chi)=&\sum_{\sigma\in\mathcal{A}_0(\Pi_{\infty},\mathfrak{M};\chi_{\infty},\omega)}\sum_{\phi\in\mathfrak{B}_{\sigma}}\Psi(s_1,\sigma(f)\phi)\Psi(s_2,\overline{\phi},\overline{\chi}),\\
J_{\Eis}^{\Reg}(f,\textbf{s},\chi)=&\frac{1}{4\pi}\sum_{\eta}\int_{i\mathbb{R}}\sum_{\phi\in \mathfrak{B}_{\sigma_{0,\eta}}}\Psi(s_1,\sigma_{0,\eta}(f)E(\cdot,\phi,\lambda),\chi)\Psi(s_2,\overline{E(\cdot, \phi,\lambda)},\overline{\chi})d\lambda,
\end{align*}
where $\eta$ ranges through $\in \widehat{F^{\times}\backslash\mathbb{A}_F^{(1)}},$ $\Re(s_1)\gg 1$ and $\Re(s_2)\gg 1.$

The function $J_{0}^{\Reg}(f,\textbf{s},\chi)$ continues to a holomorphic function $J_{0}^{\Reg,\heartsuit}(f,\textbf{s},\chi)$ in $\mathbb{C}^2.$ It is proved in \cite[Proposition 3.2]{Yan23c} that $J_{\Eis}^{\Reg}(f,\textbf{s},\chi)$ extends to a holomorphic function $J_{\Eis}^{\Reg,\heartsuit}(f,\textbf{s},\chi)$ in $-1/4<\Re(s_1), \Re(s_2)<1/4$ with
\begin{align*}
J_{\Eis}^{\Reg,\heartsuit}(f,\textbf{s},\chi)=\frac{1}{4\pi}\sum_{\eta}\int_{i\mathbb{R}}\sum_{\phi\in \mathfrak{B}_{\sigma_{0,\eta}}}\Psi(s_1,\sigma_{0,\eta}(f)E(\cdot,\phi,\lambda),\chi)\Psi(s_2,\overline{E(\cdot, \phi,\lambda)},\overline{\chi})d\lambda,
\end{align*}
where $\eta\in \widehat{F^{\times}\backslash\mathbb{A}_F^{(1)}},$ and the integrand $\Psi(s_1,\sigma_{0,\eta}(f)E(\cdot,\phi,\lambda),\chi)\Psi(s_2,\overline{E(\cdot, \phi,\lambda)},\overline{\chi})$ is identified with its meromorphic continuation. In particular, $J_{\Spec}^{\Reg}(f,\textbf{s},\chi)$ is holomorphic in the region $-1/4<\Re(s_1), \Re(s_2)<1/4.$

\subsection{Spectral Side: the Second Moment}\label{sec3.5}
Let notation be as in \textsection \ref{sec2}. Denote by ${f}^{\ddagger}(g)=\otimes_{v\mid\infty}f_v(g_v,\chi_v)\otimes\otimes_{v\in \Sigma_{F,\fin}} f_v(g_{v};\omega_v),$ where $f_v(\cdot;\omega_v)$ is defined by \eqref{5.}, i.e., $f_v(z_vk_v;\omega_v)={\Vol(\overline{K_{v}[m_v]})^{-1}}\omega_v(z_v)^{-1}\omega_v(E_{2,2}(k_v))^{-1}.$ Define  
\begin{align*}
\varphi^{\ddagger}(x):=\int_{G(\mathbb{A})}{f}^{\ddagger}(g)\prod_{v\mid\mathfrak{Q}}\Bigg[\frac{1}{\tau(\chi_v)}\sum_{\beta}\chi_v(\beta)\sigma_v\left(\begin{pmatrix}
	1&\beta\varpi_v^{-n_v}\\
	&1
\end{pmatrix}\right)\Bigg]\sigma(g)\varphi(x)dg,
\end{align*}
where $\beta_v$ ranges over $\in (\mathcal{O}_v/\varpi_v^{n_v}\mathcal{O}_v)^{\times}.$

To simplify notations, we shall still write $\Psi(s,\phi^{\ddagger},\chi)$ and $\Psi(s,E(\cdot,\phi,\lambda)^{\ddagger},\chi)$ for their holomorphic continuations, respectively. It follows from the construction of $f$ that $J_{0}^{\Reg,\heartsuit}(f,\textbf{0},\chi)$ and $J_{\Eis}^{\Reg,\heartsuit}(f,\textbf{0},\chi)$ can be written as follows.  
\begin{lemma}\label{lem8}
Let notation be as before. Then \begin{align*}
J_{0}^{\Reg,\heartsuit}(f,\textbf{0},\chi)=&\sum_{\sigma\in\mathcal{A}_0([G],\omega)}\sum_{\phi\in\mathfrak{B}_{\sigma}}\big|\Psi(0,\phi^{\ddagger},\chi)\big|^2,\\
J_{\Eis}^{\Reg,\heartsuit}(f,\textbf{0},\chi)=&\frac{1}{4\pi}\sum_{\eta\in \widehat{F^{\times}\backslash\mathbb{A}_F^{(1)}}}\int_{i\mathbb{R}}\sum_{\phi\in \mathfrak{B}_{\sigma_{0,\eta}}}\big|\Psi(0,E(\cdot,\phi,\lambda)^{\ddagger},\chi)\big|^2d\lambda. 
\end{align*}
\end{lemma}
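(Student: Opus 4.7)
The plan is to combine the spectral expansion of the kernel from \textsection\ref{sec3.1} with a standard operator-theoretic identity, after exploiting the convolution/character-sum structure of $f$.

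First, I would start from the representations of $J_0^{\Reg,\heartsuit}(f,\mathbf{s},\chi)$ and $J_{\Eis}^{\Reg,\heartsuit}(f,\mathbf{s},\chi)$ obtained in \textsection\ref{sec3.4}, specialize to $\mathbf{s}=\mathbf{0}$, and use the functional equation $\Psi(s,\overline{\phi},\overline{\chi})=\overline{\Psi(\overline{s},\phi,\chi)}$ to rewrite
\[
J_0^{\Reg,\heartsuit}(f,\mathbf{0},\chi)=\sum_{\sigma}\sum_{\phi\in\mathfrak{B}_\sigma}\Psi(0,\sigma(f)\phi,\chi)\overline{\Psi(0,\phi,\chi)},
\]
and analogously for $J_{\Eis}^{\Reg,\heartsuit}$. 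Viewing $\Lambda(\phi):=\Psi(0,\phi,\chi)$ as a linear functional represented by a (generalized) vector $v_\Lambda$ via $\Lambda(\phi)=\langle\phi,v_\Lambda\rangle$, Parseval's identity over $\mathfrak{B}_\sigma$ gives the basic algebraic identity: for any operator $T=AA^*$ on the space of $\sigma$,
\[
\sum_{\phi\in\mathfrak{B}_\sigma}\Lambda(T\phi)\overline{\Lambda(\phi)}=\langle A^*v_\Lambda,A^*v_\Lambda\rangle=\sum_{\phi\in\mathfrak{B}_\sigma}|\Lambda(A\phi)|^2.
\]
Thus the lemma reduces to factoring $\sigma(f)=AA^*$ with $A\phi=\phi^{\ddagger}$.

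The factorization is carried out place by place. At $v\mid\infty$, the definition \eqref{3.} gives $f_v=f_v(\cdot,\chi_v)*f_v(\cdot,\chi_v)^*$, hence $\sigma_v(f_v)=A_vA_v^*$ with $A_v=\sigma_v(f_v(\cdot,\chi_v))$. At an unramified finite place ($v\nmid\mathfrak{Q}$), the function $f_v=f_v(\cdot;\omega_v)$ in \eqref{5.} acts as a self-adjoint idempotent projector $P_v$ onto the line of $K_v[m_v]$-fixed vectors with nebentypus $\omega_v$, so $\sigma_v(f_v)=P_v=P_vP_v^*$ and $A_v=P_v$. At a ramified finite place $v\mid\mathfrak{Q}$, the change of variables $g'=u(\alpha)gu(\beta)$ in the definition of $f_v$ yields
\[
\sigma_v(f_v)=\tfrac{1}{|\tau(\chi_v)|^2}\sum_{\alpha,\beta}\chi_v(\alpha)\overline{\chi_v(\beta)}\,\sigma_v(u(-\alpha))\,P_v\,\sigma_v(u(-\beta));
\]
substituting $\alpha\to-\alpha$, $\beta\to-\beta$ (the signs cancel since $\chi_v(-1)\overline{\chi_v(-1)}=1$) rewrites this as $U_v P_v U_v^*$, where $U_v=\tfrac{1}{\tau(\chi_v)}\sum_\beta\chi_v(\beta)\sigma_v(u(\beta))$ and $U_v^*=\overline{\tau(\chi_v)}^{-1}\sum_\beta\overline{\chi_v(\beta)}\sigma_v(u(-\beta))$. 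Using $P_v^2=P_v=P_v^*$, this factors as $A_vA_v^*$ with $A_v=U_vP_v$. Multiplying across all places gives $\sigma(f)=AA^*$, and inspection of the definition of $\varphi^{\ddagger}$ in \textsection\ref{sec3.5} shows that $A\phi=\phi^{\ddagger}$ as tensor products.

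The Eisenstein term is handled identically, applying the same factorization to the induced representation $\sigma_{0,\eta}(f)$ and integrating the resulting $|\Psi(0,E(\cdot,\phi,\lambda)^{\ddagger},\chi)|^2$ over $\lambda\in i\mathbb{R}$, noting that unitarity of $\lambda$ preserves the inner-product/Parseval structure used above. The main technical obstacle is verifying the factorization at ramified finite places: the double sum in $f_v$ is not manifestly of the form $h*h^*$, and one must carefully track the interaction between the Gauss-sum normalization $1/|\tau(\chi_v)|^2$, the adjoint on unipotent translates $\sigma_v(u(\beta))^*=\sigma_v(u(-\beta))$, and the idempotent $P_v$, to conclude the positive factorization $\sigma_v(f_v)=(U_vP_v)(U_vP_v)^*$.
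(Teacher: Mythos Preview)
The paper records this lemma as an immediate consequence of the construction of $f$ and gives no proof, so your factorization-plus-Parseval argument is precisely the mechanism being invoked; the archimedean and unramified finite steps are correct as written.

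At the ramified finite places, however, your sign bookkeeping slips. After the change of variables you correctly obtain
\[
\sigma_v(f_v)=\frac{1}{|\tau(\chi_v)|^{2}}\sum_{\alpha,\beta}\chi_v(\alpha)\overline{\chi}_v(\beta)\,\sigma_v(u(-\alpha))\,P_v\,\sigma_v(u(-\beta)).
\]
Your double substitution $\alpha\mapsto-\alpha,\ \beta\mapsto-\beta$ does kill $\chi_v(-1)\overline{\chi_v(-1)}=1$, yielding $|\tau(\chi_v)|^{-2}\sum_{\alpha,\beta}\chi_v(\alpha)\overline{\chi}_v(\beta)\,\sigma_v(u(\alpha))P_v\sigma_v(u(\beta))$; but this is \emph{not} $U_vP_vU_v^*$. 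With your own (correct) formula $U_v^*=\overline{\tau(\chi_v)}^{-1}\sum_\beta\overline{\chi_v(\beta)}\sigma_v(u(-\beta))$ one finds $U_vP_vU_v^*=|\tau(\chi_v)|^{-2}\sum_{\alpha,\beta}\chi_v(\alpha)\overline{\chi}_v(\beta)\,\sigma_v(u(\alpha))P_v\sigma_v(u(-\beta))$, which differs by one further $\beta\mapsto-\beta$. The honest identity is $\sigma_v(f_v)=\chi_v(-1)\,U_vP_vU_v^*$, hence globally $\sigma(f)=\chi_{\fin}(-1)\,AA^*$ and the lemma acquires the factor $\chi_{\fin}(-1)=\chi_\infty(-1)\in\{\pm1\}$. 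This sign is harmless for the applications (Theorem~\ref{thm6} and the comparison with $J_{\Geo}$ only use $|J_{\Spec}|$ together with nonnegativity of $\sum|\Psi|^2$), but the equality as stated is not established without either redefining $\phi^{\ddagger}$ to absorb the sign or remarking that the identity is only needed up to this global root of unity.
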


\subsubsection{Local calculations}
The non-archimedean calculation presented in \cite[\textsection 3.5.2]{Yan23c} is as follows:
\begin{lemma}\label{lem9}
Let notation be as before. Let $\sigma\in\mathcal{A}_0([G],\omega).$ Let $\phi\in\sigma$ be a pure tensor. Suppose that $\phi^{\ddagger}\neq 0.$ Then for $v\in\Sigma_{F,\fin},$ we have 
\begin{equation}\label{15..}
\Psi_v(s,\phi^{\ddagger},\chi)=W_{\phi,v}(I_2)L_v(s+1/2,\sigma_v\times\chi_v),\ \ \Re(s)\geq 0. 
\end{equation}
\end{lemma}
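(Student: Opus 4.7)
The identity factors as a product over finite places, so the plan is to verify it place by place, breaking into the cases $v \nmid \mathfrak{Q}$ (trivial twist) and $v \mid \mathfrak{Q}$ (primitive $\chi_v$-twist).

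At a place $v \nmid \mathfrak{Q}$ (so $n_v = 0$), the $\chi_v$-averaging operator is absent, and the local test function $f_v(\cdot;\omega_v)$ acts as the Atkin-Lehner-type projector onto the one-dimensional subspace of vectors transforming under $K_v[m_v]$ by the character $k \mapsto \omega_v(E_{2,2}(k))^{-1}$. Under the hypothesis that $\sigma_v$ has conductor $r_{\sigma_v} \leq m_v$ (which is part of the definition of $\mathcal{A}_0$) together with $\phi^\ddagger \neq 0$, the component $\phi_v$ must have a nonzero new-vector projection. I would then invoke the classical local $\mathrm{GL}(2)$ Rankin-Selberg calculation for new vectors (e.g., via the Jacquet-Langlands zeta integral or Casselman-Shalika formula) to conclude $\Psi_v(s,\phi^\ddagger,\chi) = W_{\phi,v}(I_2) L_v(s+1/2,\sigma_v \times \chi_v)$.

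At a place $v \mid \mathfrak{Q}$ (so $n_v \geq 1$), I would first compute $W_{\phi^\ddagger,v}$ explicitly. Using the right-translation identity
\[
W_{\sigma_v(u(t))\phi,v}\!\begin{pmatrix}x & \\ & 1\end{pmatrix} = \psi_v(xt)\, W_{\phi,v}\!\begin{pmatrix}x & \\ & 1\end{pmatrix}
\]
and averaging against $\chi_v$ yields
\[
W_{\phi^\ddagger,v}\!\begin{pmatrix}x & \\ & 1\end{pmatrix} = \frac{W_{\phi,v}\!\begin{pmatrix}x & \\ & 1\end{pmatrix}}{\tau(\chi_v)} \sum_{\beta \in (\mathcal{O}_v/\varpi_v^{n_v})^\times} \chi_v(\beta)\, \psi_v(x\beta\varpi_v^{-n_v}).
\]
I would then analyze the inner character sum as a shifted Gauss sum. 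Writing $x = \varpi_v^k u$ with $u \in \mathcal{O}_v^\times$, a change of variable $\beta \mapsto u^{-1}\beta$ reduces matters to $\chi_v(u)^{-1}\cdot S(k)$, where $S(k) = \sum_\beta \chi_v(\beta)\psi_v(\varpi_v^{k-n_v}\beta)$. Parametrizing $\beta \mapsto \beta_0(1+\varpi_v^{n_v-1}t)$ and using primitivity of $\chi_v$ (exact conductor $\varpi_v^{n_v}$, hence nontrivial on $(1+\varpi_v^{n_v-1}\mathcal{O}_v)/(1+\varpi_v^{n_v}\mathcal{O}_v)$) together with unramifiedness of $\psi_v$ in the relevant sense, the orthogonality argument gives $S(0) = \tau(\chi_v)$ and $S(k) = 0$ for $k \geq 1$. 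Inserting the resulting expression into the Mellin integral $\Psi_v(s,\phi^\ddagger,\chi)$, the $\chi_v(u)^{\pm 1}$ factors cancel, and I would match the outcome against $W_{\phi,v}(I_2) L_v(s+1/2,\sigma_v\times\chi_v)$ by identifying $W_{\phi^\ddagger,v}$ as (a scalar multiple of) the new-vector Whittaker function for the twist $\sigma_v \otimes \chi_v$.

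The main obstacle is the last matching step at $v \mid \mathfrak{Q}$: one must carry out the case analysis over the three local types of $\sigma_v$ (supercuspidal, Steinberg, principal series) and verify, in each, that the Gauss-sum vanishing for $k \geq 1$ is consistent with the Mellin transform reproducing exactly the local $L$-factor $L_v(s+1/2,\sigma_v\times\chi_v)$. In particular, when $\sigma_v \otimes \chi_v$ has a nontrivial $L$-factor, one must track the Whittaker-support interaction of $\phi_v$ with the twist carefully in order for the identity to hold uniformly without spurious volume constants. This is precisely the bookkeeping carried out in \cite[\textsection 3.5.2]{Yan23c}, whose analysis I would follow step by step to complete the argument.
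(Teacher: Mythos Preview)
Your proposal is correct and matches the paper's approach: the paper does not give an independent proof of this lemma but simply imports the computation from \cite[\textsection 3.5.2]{Yan23c}, which is exactly the reference you invoke and whose argument you have accurately outlined (Gauss-sum collapse to $e_v(x)=0$ at $v\mid\mathfrak{Q}$, newvector zeta integral at $v\nmid\mathfrak{Q}$). There is nothing further to add.
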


\begin{lemma}\label{lem10.}
Let notation be as before. Let $\phi\in\sigma_{\lambda,\eta}$ be a pure tensor. Let $\varphi=E(\cdot,\phi,\lambda).$ Suppose that $\varphi^{\ddagger}\neq 0.$ Then for $v\in\Sigma_{F,\fin},$ $\Re(s)\geq 0,$ we have 
\begin{equation}\label{15.}
\Psi_v(s,\varphi^{\ddagger},\chi)=W_{\varphi,v}(I_2)L_v(s+1/2+\lambda,\eta_v\chi_v)L_v(s+1/2-\lambda,\eta_v^{-1}\chi_v\omega_v). 
\end{equation}
\end{lemma}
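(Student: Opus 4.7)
The plan is to follow the argument used for Lemma \ref{lem9} in [Yan23c, §3.5.2] essentially verbatim, the key point being that the local Rankin-Selberg integral at a finite place depends only on the local Whittaker function, and that the only structural difference between the cuspidal case and the induced case is the factorization of the local $L$-factor: for $\sigma_{\lambda,\eta,v}=\Ind(\eta_v|\cdot|^{\lambda}\otimes\eta_v^{-1}\omega_v|\cdot|^{-\lambda})$ we have
\begin{equation*}
L_v(s+1/2,\sigma_{\lambda,\eta,v}\times\chi_v)=L_v(s+1/2+\lambda,\eta_v\chi_v)L_v(s+1/2-\lambda,\eta_v^{-1}\omega_v\chi_v),
\end{equation*}
which is precisely the right-hand side of \eqref{15.}. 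First I would observe that the Whittaker function of the Eisenstein series $E(\cdot,\phi,\lambda)$ factors as a product of local Whittaker functions (via the Jacquet integral), so that $W_{\varphi^{\ddagger},v}$ is obtained from $W_{\varphi,v}$ by applying the finite-place test function and the Gauss-sum-weighted unipotent averages. Both operations preserve the Whittaker model, so the computation can be carried out purely inside Whittaker functions.

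Next I would split into three cases. At places $v\nmid \mathfrak{M}$ (so also $v\nmid\mathfrak{Q}$), the test function is the identity (up to the central character) on the spherical vector, there are no unipotent twists, and \eqref{15.} reduces to the standard unramified Rankin-Selberg computation for principal series, which gives precisely the product of local $L$-factors. At places $v\mid\mathfrak{M}$ with $v\nmid\mathfrak{Q}$, the test function projects onto the $K_v[m_v]$-invariant line, so $\varphi^{\ddagger}_v$ is the Casselman new vector in $\sigma_{\lambda,\eta,v}$; the new-vector theory of Jacquet-Piatetski-Shapiro-Shalika applied to principal series then gives the claimed formula with $W_{\varphi,v}(I_2)$ as the constant and the product of $L$-factors as the local integral. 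At places $v\mid\mathfrak{Q}$, the test function contains the additional average $\frac{1}{\tau(\chi_v)}\sum_\beta\chi_v(\beta)\sigma_v\bigl(\begin{smallmatrix}1&\beta\varpi_v^{-n_v}\\&1\end{smallmatrix}\bigr)$. Using the translation property of the Whittaker function under upper unipotents and performing the change of variable $x\mapsto x(1+\beta\varpi_v^{-n_v})$ in the Rankin-Selberg integral, the Gauss sum sifts out the $\chi_v$-component and yields the factor $W_{\varphi,v}(I_2)$ times the local $L$-factor product, with the normalization $|\tau(\chi_v)|^2$ cancelling against the volume factors in $f_v$.

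The main obstacle is the bookkeeping at places $v\mid\mathfrak{Q}$: one must carefully track the interaction between the Gauss sum normalization, the volume $\Vol(\overline{K_v[m_v]})^{-1}$, and the functional equation of the local Whittaker function under the unipotent twist, and verify that the resulting constant collapses exactly to $W_{\varphi,v}(I_2)$. This is, however, the same verification carried out for $\sigma$ cuspidal in [Yan23c, §3.5.2], and it is insensitive to whether $\sigma_v$ is supercuspidal, a twisted Steinberg, or irreducible principal series; the argument therefore transports to the induced representation $\sigma_{\lambda,\eta,v}$ without modification, and the factorization of its $L$-factor produces the two gamma factors displayed on the right of \eqref{15.}.
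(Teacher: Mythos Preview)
Your proposal is correct and aligns with the paper's treatment: the paper does not give a separate proof of this lemma but presents it alongside Lemma~\ref{lem9} as following from the non-archimedean calculation in \cite[\S3.5.2]{Yan23c}, the only change being the replacement of $L_v(s+1/2,\sigma_v\times\chi_v)$ by its factorization for the induced representation $\sigma_{\lambda,\eta,v}$. Your observation that the argument is insensitive to the type of $\sigma_v$ and transports verbatim to the Eisenstein case is exactly the point.
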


\subsection{Spectral Side: the lower bound}\label{sec3.6}
In this section we prove Theorem \ref{thm6}. 

Denote by $f_v^{\circ}:=\int_{Z(F_v)}\tilde{f}_{v}(zg)\omega_{v}(z)d^{\times}z.$ Let $\pi=\pi_{\infty}\otimes\pi_{\fin}$ be a unitary automorphic representation of $\mathrm{GL}(2)/F$ with $\pi_{\infty}\otimes\chi_{\infty}\simeq \Pi_{\infty}.$ Let $v\mid\infty,$ by the properties of $f_v$ (cf. e.g., \cite{Nel21}), 
\begin{equation}\label{37.}
T_v^{-1/4-\varepsilon}\ll_{\varepsilon}\int_{F_v^{\times}}(\pi_v(f_v^{\circ})(W_v\otimes \chi_v))\left(\begin{pmatrix}
	x_v\\
	&1
\end{pmatrix}
\right)d^{\times}x_v\ll_{\varepsilon} T_v^{-1/4+\varepsilon}
\end{equation}
for some $W_v$ in the Kirillov model of $\pi_v.$ By definition \eqref{6.} in \textsection \ref{3.2.1}, we have 
\begin{align*}
\pi_v(f_v(\cdot,\chi_v))W_v\left(\begin{pmatrix}
	x_v\\
	&1
\end{pmatrix}
\right)\chi_v(x_v)=(\pi_v(f_v^{\circ})(W_v\otimes \chi_v))\left(\begin{pmatrix}
	x_v\\
	&1
\end{pmatrix}
\right).
\end{align*}

Hence, $\Psi_v(s_0,\pi_v(f_v(\cdot,\chi_v))W_v,\chi_v)\gg_{\varepsilon} T_v^{-1/4-\varepsilon}$ for some $W_v$ in the Kirillov model of $\pi_v.$ Let $\phi\in\pi$ be a cusp form with Petersson norm $\langle\phi,\phi\rangle=1,$ and Whittaker function $W_{\phi}=\otimes_vW_{\phi,v}$ (defined by \eqref{whi}), such that $W_{\phi,v}=W_v,$ for all $v\mid\infty,$ and $W_{\phi,v}$ is $\prod_{v<\infty}K_v[n_v]$-invariant. Then 
\begin{align*}
\Psi_{v}(0,\phi^{\ddagger},\chi)=\Psi_v(0,\pi_v(f_v(\cdot,\chi_v))W_v,\chi_v)\gg_{\varepsilon} T_v^{-1/4-\varepsilon},
\end{align*}
where the implied constant depends on $\varepsilon,$ $c_v$ and $C_v$ at $v\mid\infty.$

Together with Lemmas \ref{lem8}, \ref{lem9}, and the bound $|W_{\fin}(I_2)|\gg (TM)^{-\varepsilon}$ (cf. \cite{HL94}), 
\begin{align*}
J_{0}^{\Reg,\heartsuit}(f,\textbf{0},\chi)\gg T^{-\frac{1}{2}-\varepsilon}(MQ)^{-\varepsilon} \sum_{\pi\in\mathcal{A}_0(\Pi_{\infty},\mathfrak{M};\chi_{\infty},\omega)}|L(1/2,\pi\times\chi)|^2,
\end{align*}
and $J_{\Eis}^{\Reg,\heartsuit}(f,\textbf{0},\chi)$ is 
\begin{align*}
\gg T^{-\frac{1}{2}-\varepsilon}(MQ)^{-\varepsilon} \sum_{\eta}\int_{t\in\mathbb{R}}\frac{|L(1/2+it,\eta\chi)L(1/2+it,\omega\overline{\eta}\chi)|^2}{|L(1+2it,\omega\overline{\eta}^2)|^2}dt,
\end{align*}
where $\eta\in \mathcal{X}_0(\Pi_{\infty},\mathfrak{M};\chi_{\infty},\omega).$ Therefore, Theorem \ref{thm6} follows.

\section{The Geometric Side: the Orbital Integral $J^{\Reg}_{\Geo,\sm}(f,\chi)$}\label{sec4}
Let $f=\otimes_vf_v$ be text function constructed in  \textsection\ref{3.2}. Let $\mathbf{s}=(s,0)$ with $s\in\mathbb{C}.$ Recall the definition in \textsection\ref{rref}: 
\begin{align*}
J^{\Reg}_{\Geo,\sm}(f,\chi):=\Big[J^{\Reg}_{\Geo,\sm}(f,\textbf{s},\chi)-\underset{s=0}{\Res}\ J^{\Reg}_{\Geo,\sm}(f,\textbf{s},\chi)\Big]_{s=0},
\end{align*}
where, for $\Re(s)>1,$ the small cell orbital integral is defined by (cf. \textsection\ref{sec3.2.2}): 
\begin{align*}
J^{\Reg}_{\Geo,\sm}(f,\textbf{s},\chi):=\int_{\mathbb{A}_F^{\times}}\int_{\mathbb{A}_F^{\times}}f\left(\begin{pmatrix}
	y&b\\
	&1
\end{pmatrix}\right)\psi(xb)|x|^{1+s}\overline{\chi}(y)d^{\times}xd^{\times}y,
\end{align*}
which is a Tate integral representing $\Lambda(1+s,\textbf{1}_F).$

\begin{prop}\label{prop12}
Let notation be as before. Then 
\begin{align*}
J^{\Reg}_{\Geo,\sm}(f,\chi)\ll_{\varepsilon} M^{1+\varepsilon}T^{\frac{1}{2}+\varepsilon},
\end{align*} 
where the implied constant depends only on $F,$ $\varepsilon,$ and $c_v,$ $C_v$ at $v\mid\infty,$ cf. \textsection \ref{2.1.2}.
\end{prop}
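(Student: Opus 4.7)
The plan is to realize $J^{\Reg}_{\Geo,\sm}(f,\mathbf{s},\chi)$ as the finite Laurent coefficient, at $s = 0$, of a global Tate zeta integral attached to $\Lambda(1+s,\mathbf{1}_F)$, and then to estimate the resulting quantity place by place. First I would perform a partial Fourier transform in the variable $b$, setting $\Phi(y,x) := \int_{\mathbb{A}_F} f\bigl(\begin{smallmatrix} y & b \\ & 1 \end{smallmatrix}\bigr)\,\psi(xb)\,db$, which factors as a tensor product $\Phi = \prod_v \Phi_v$ by the product structure of $f$ and $\psi$. The $x$-integral $\int_{\mathbb{A}_F^{\times}} \Phi(y,x)\,|x|^{1+s}\,d^{\times}x$ is then a global Tate zeta integral whose only singularity at $s = 0$ is the simple pole carried by $\Lambda(1+s,\mathbf{1}_F)$. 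Subtracting this residue, as prescribed in the definition of $J^{\Reg}_{\Geo,\sm}(f,\chi)$ in \textsection\ref{rref}, isolates a finite constant term that factors through local integrals of $\Phi_v$ against $\overline{\chi}_v(y_v)$.

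Next I would estimate these local integrals place by place. At finite $v$ with $v \nmid \mathfrak{M}$ the factor is trivial. At $v \mid \mathfrak{M}$ (including $v \mid \mathfrak{Q}$), I would insert the explicit expression \eqref{5.} for $f_v(\cdot;\omega_v)$ together with the double Gauss-sum average from \textsection\ref{2.2.2}; using orthogonality of characters on $(\mathcal{O}_v/\varpi_v^{n_v})^{\times}$ and $|\tau(\chi_v)| \asymp q_v^{n_v/2}$, the local mass is $\ll \Vol(\overline{K_v[m_v]})^{-1} q_v^{\varepsilon m_v}$, and the product over finite places produces the factor $M^{1+\varepsilon}$. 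At an archimedean place $v \mid \infty$, I would invoke Nelson's support bound \eqref{245}, the sup-norm bound \eqref{250}, and the transversality estimate \eqref{2.7} of Proposition \ref{prop3.1}. The key input is that once $\Ad^{*}(g)\tau - \tau = O(T_v^{-1/2+\varepsilon})$ is enforced inside the small-cell parameterization $g = \bigl(\begin{smallmatrix} y & b \\ & 1 \end{smallmatrix}\bigr)$, the $b$-direction (an orbit direction for the regular semisimple $\tau$) is squeezed into a region much thinner than the outer box $|b|_v \ll T_v^{-\varepsilon}$, while the Cartan/stabilizer direction $y - 1$ is left essentially free. A careful Fourier--Mellin calculation then extracts the archimedean contribution $\ll T_v^{1/2+\varepsilon}$ per place, producing the global $T^{1/2+\varepsilon}$ factor.

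The main obstacle is the archimedean analysis: one has to exploit Nelson's microlocalization accurately enough to squeeze $T_v^{1/2}$ savings out of what would otherwise be the crude mass estimate $\|f_v\|_1 \ll 1$, i.e., to see that the transversality condition on the coadjoint orbit of $\tau$ — together with the character $\overline{\chi}_v(y_v)$ at ramified places — leads to the precise cancellation in the $x$-Tate and $y$-Mellin integrals. A secondary technical point is to verify that the $s \to 0$ regularization amounts to a clean subtraction of the simple pole, so that the remaining constant term is controlled by local mass calculations with at worst logarithmic loss; this is the role of the singularity-matching principle from \textsection\ref{rref} that couples $J^{\Reg}_{\Geo,\sm}(f,\chi)$ with $J^{\Reg}_{\Geo,\du}(f,\chi)$ and guarantees that no spurious $\log T$-factors beyond those already absorbed into $T^{\varepsilon}$ appear.
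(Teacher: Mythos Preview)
Your overall strategy matches the paper's: factorize $J^{\Reg}_{\Geo,\sm}(f,\mathbf{s},\chi)$ into local pieces, identify the finite part as $V_F\cdot\zeta_F(1+\cdots)$ with $V_F\asymp M^{1+o(1)}$, bound the archimedean factor by $T^{1/2+\varepsilon}$, and extract the regularized value at $s=0$. The paper does the last step via the Cauchy integral $\frac{1}{2\pi i}\int_{|s|=\varepsilon}s^{-1}J^{\Reg}_{\Geo,\sm}(f,\mathbf{s},\chi)\,ds$, which is exactly your ``subtract the simple pole'' procedure.

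The one point where your description diverges from the paper is the archimedean mechanism. You invoke transversality (Proposition~\ref{prop3.1}) and frame the $T_v^{1/2}$ saving as coming from the $b$-direction being squeezed while $y-1$ stays free. The paper does \emph{not} use Proposition~\ref{prop3.1} here at all; that tool is reserved for the regular orbital integrals in \textsection\ref{sec6.4}. Instead, the paper's Lemma~\ref{lem5.2} argues directly from the microlocal construction: the Fourier transform in $b_v$ against $\psi_v(x_vb_v)$ detects the frequency variable $x_v$, and the construction of $\tilde f_v$ forces $\mathcal I_v(x_v,s)=0$ unless $x_vT_v^{-1}-\gamma_v\ll T_v^{-1/2+\varepsilon}$ for a fixed $\gamma_v$ determined by $\tau$. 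Thus $x_v$ lives in a window of Lebesgue measure $\asymp T_v^{1/2+\varepsilon}$ around $\gamma_vT_v$. Combining this with the sup-norm $T_v^{1+\varepsilon}$ and the support bounds (both $y_v$ and $b_v$ contributing a factor $T_v^{-1/2+\varepsilon}$) gives $\mathcal I_v(x_v,s)\ll |x_v|_v^{1+O(\varepsilon)}T_v^{\varepsilon}$ on that window, whence $J_{\sm,v}(s)\ll T_v^{1/2+\varepsilon}$. In short, the decisive $T_v^{1/2}$ is the \emph{width of the $x_v$-localization}, not a volume saving in $(y,b)$; your ``careful Fourier--Mellin calculation'' would of course reproduce this, but the paper's route is more elementary than the transversality argument you sketch.
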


\subsection{Local calculations at nonarchimedean places}\label{sec5.1}
Let $\Re(s)>0.$ Let
\begin{align*}
J_{\sm,v}(s):=\int_{F_v^{\times}}|x_v|_v^{1+2s}\int_{F_v^{\times}}\int_{F_v}f_v\left(\begin{pmatrix}
	y_v&b_v\\
	&1
\end{pmatrix}\right)\psi_v(x_vb_v)\overline{\chi}_v(y_v)db_vd^{\times}y_vd^{\times}x_v.
\end{align*}
Take $\mathbf{s}=(s,0).$ By definition, we have
\begin{align*}
J^{\Reg}_{\Geo,\sm}(f,\textbf{s},\chi):=&\prod_{v\in\Sigma_{F}}J_{\sm,v}(s),\ \ \Re(s)>0.
\end{align*}

By \cite[Lemma 4.2]{Yan23c} we have, for $v\in\Sigma_{F,\fin},$ that 
\begin{equation}\label{19}
J_{\sm,v}(s)=\begin{cases}
N_{F_v}(\mathfrak{D}_{F_v})^{-1/2}\Vol(K_v[m_v])^{-1}\textbf{1}_{(\mathfrak{D}_{F_v}^{-1})^{\times}}(x_v),\ & \text{if $v\mid\mathfrak{Q},$}\\
|x_v|_v^{1+2s_0}N_{F_v}(\mathfrak{D}_{F_v})^{-1/2}\Vol(K_v[m_v])^{-1}\textbf{1}_{\mathfrak{D}_{F_v}^{-1}}(x_v),\ & \text{if $v\nmid\mathfrak{Q},$}
\end{cases}
\end{equation}
where $m_v=e_v(\mathfrak{M})$ is defined in \textsection\ref{2.1.5}. Hence, 
\begin{equation}\label{5.2}
J^{\Reg}_{\Geo,\sm}(f,\textbf{s},\chi)=V_F\cdot N_{F}(\mathfrak{D}_{F})^{\frac{1}{2}+s} \zeta_F(1+2s)\prod_{v\mid\infty}J_{\sm,v}(s),
\end{equation}
where $V_F:=\prod_{v<\infty}\Vol(K_v[m_v])^{-1}\asymp |\mathfrak{M}|^{1+o(1)}.$

\subsection{Local estimates at archimedean places}\label{4.1.3}
\begin{lemma}\label{lem5.2}
Let notation be as before. Let $\varepsilon>0$ be a constant. Let $\mathcal{C}:=\{s\in\mathbb{C}:\ |s|=\varepsilon\}$ be the circle of radius $\varepsilon.$ Then for $s\in\mathcal{C},$ we have
\begin{equation}\label{5.3}
\prod_{v\mid\infty}J_{\sm,v}(s)\ll T^{\frac{1}{2}+\varepsilon},
\end{equation}
where the implied constant depends on $F,$ $\varepsilon,$ $c_v$ and $C_v,$ $v\mid\infty.$ 
\end{lemma}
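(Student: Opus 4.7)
The plan is to compute $J_{\sm,v}(s)$ by passing to the Kirillov model of the microlocalized representation $\Pi_v$, reducing the bound to Mellin-transform estimates on the microlocalized Kirillov function. Since $\pi_v \otimes \chi_v \simeq \Pi_v$, we have $\pi_v(f_v(\cdot,\chi_v)) = \Pi_v(f_v^{\circ})$, and by the Nelson construction this operator is, up to an $O(T_v^{-\varepsilon})$ operator-norm error, a rank-one projector onto the microlocalized Whittaker vector $W_0 \in \mathcal{W}(\Pi_v,\psi_v)$. Correspondingly $f_v$ agrees, up to a multiplicative $1 + O(T_v^{-\varepsilon})$ error, with a scalar multiple $c_v \asymp T_v^{1+\varepsilon}$ of the matrix coefficient $g \mapsto \langle \Pi_v(g)\phi_0, \phi_0\rangle$ of the associated unit vector $\phi_0 \leftrightarrow W_0$; the scaling $c_v$ matches $f_v(e) \asymp \|f_v(\cdot,\chi_v)\|_{L^2}^2$ from \eqref{250}.

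First I would carry out the $b$-integration. Writing $K_0(y) := W_0(a_y)$ and using the translation formula $W_0(n_b a_y) = \psi_v(yb)\, W_0(a_y)$ together with the Kirillov inner product, a direct calculation yields
\begin{equation*}
\int_{F_v} f_v(n_b a_y)\, \psi_v(xb)\, db \;\approx\; \frac{c_v}{|x|_v}\, K_0(-x)\, \overline{K_0(-x/y)}.
\end{equation*}
Substituting this into $J_{\sm,v}(s)$ and performing the $y$- and $x$-integrations via the successive substitutions $u = -x/y$ and $x \mapsto -x$ (which separate the two variables and absorb the characters $\chi_v$) yields the clean factorization
\begin{equation*}
J_{\sm,v}(s) \;\approx\; c_v \cdot \overline{Z(0)} \cdot Z(s),\qquad
Z(s) := \int_{F_v^{\times}} K_0(u)\, \overline{\chi}_v(u)\, |u|_v^{2s}\, d^{\times} u.
\end{equation*}

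Next I would bound the two Mellin factors. The estimate \eqref{37.} supplies $|Z(0)| \ll T_v^{-1/4+\varepsilon}$. Because $K_0$ is supported on $|u|_v \asymp T_v$, the perturbation $|u|_v^{2s} = T_v^{O(\varepsilon)}$ is harmless and $Z'(0)$ picks up only an additional $\log T_v$ factor, so Taylor expanding around $s=0$ yields $|Z(s)| \ll T_v^{-1/4+\varepsilon}$ uniformly for $|s| = \varepsilon$. Combining, $|J_{\sm,v}(s)| \ll T_v^{1+\varepsilon} \cdot T_v^{-1/2+\varepsilon} = T_v^{1/2+\varepsilon}$, and taking the product over archimedean places gives the claimed bound $\prod_{v\mid\infty} J_{\sm,v}(s) \ll T^{1/2+\varepsilon}$.

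The main obstacle is the rigorous justification of the matrix-coefficient approximation with effective $T_v^{-\varepsilon}$ savings on all error terms, as well as the transversality-based Fourier estimate behind the displayed identity; both are provided by the microlocal framework of \cite{Nel20, Nel21, NV21}, whose $T_v^{-\varepsilon}$ savings on the operator-norm error of $\Pi_v(f_v^\circ) - \langle \cdot, \phi_0\rangle\phi_0$ and on the off-diagonal decay of $K_0$ are ample to absorb into the stated $T_v^{\varepsilon}$ factor.
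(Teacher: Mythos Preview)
Your route differs substantially from the paper's, which is direct and uses no Kirillov-model input. The paper bounds the inner integral
\[
\mathcal{I}_v(x_v,s):=|x_v|_v^{1+2s}\int_{F_v^\times}\int_{F_v} f_v\left(\begin{smallmatrix} y_v & b_v \\ & 1\end{smallmatrix}\right)\psi_v(x_vb_v)\overline{\chi}_v(y_v)\,db_v\,d^\times y_v
\]
using only the sup-norm bound \eqref{250} and the support description \eqref{245}: the $y_v$-range contributes measure $\ll T_v^{-1/2+\varepsilon}$, the essential $b_v$-range is $|b_v|_v\ll T_v^{-1/2+\varepsilon}$, and $\mathcal{I}_v$ vanishes unless $x_vT_v^{-1}$ lies in a window of width $T_v^{-1/2+\varepsilon}$ about a fixed $\gamma_v$. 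Multiplying out gives $\mathcal{I}_v(x_v,s)\ll|x_v|_v^{1+2\varepsilon}T_v^{\varepsilon}$ on this support, and integrating over the $x_v$-window (Lebesgue measure $\asymp T_v^{1/2+\varepsilon}$) yields $J_{\sm,v}(s)\ll T_v^{1/2+\varepsilon}$.

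Your argument has a genuine gap at its first step. The assertion that $f_v(g)$ agrees, up to a multiplicative $1+O(T_v^{-\varepsilon})$ error, with a scalar multiple of the matrix coefficient $\langle\Pi_v(g)\phi_0,\phi_0\rangle$ does not follow from the operator-level approximation $\Pi_v(f_v^{\circ})\approx|\phi_0\rangle\langle\phi_0|$, and the cited microlocal references do not supply it: infinitely many compactly supported functions realize the same operator on a fixed irreducible $\Pi_v$, so the operator identity cannot determine $f_v$ pointwise. Indeed $f_v$ is compactly supported (via \eqref{245}) whereas matrix coefficients of tempered representations are nonvanishing on a dense open set, so no global multiplicative comparison is possible. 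Your $b$-integral formula and the factorization $J_{\sm,v}(s)\approx c_v\,\overline{Z(0)}\,Z(s)$ rest entirely on this unjustified pointwise identification. (Even granting it, the Kirillov computation actually yields $\tfrac{c_v}{|x|_v}K_0(-xy)\overline{K_0(-x)}$ rather than your $\tfrac{c_v}{|x|_v}K_0(-x)\overline{K_0(-x/y)}$, and the factorization comes out as $c_v\,Z(0)\,\overline{Z(\bar s)}$; the numerology survives, but the formulas as written are incorrect.) If you wish to pursue a spectral route you would need an \emph{integrated} replacement---e.g.\ expressing $J_{\sm,v}(s)$ via Plancherel and controlling the full packet of representations near $\Pi_v$---which is considerably more delicate than the paper's three-line volume estimate.
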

\begin{proof}
Let $s\in\mathcal{C}.$ Denote by 
\begin{align*}
\mathcal{I}_v(x_v,s):=|x_v|_v^{1+2s}\int_{F_v^{\times}}\int_{F_v}f_v\left(\begin{pmatrix}
	y_v&b_v\\
	&1
\end{pmatrix}\right)\psi_v(x_vb_v)\overline{\chi}_v(y_v)db_vd^{\times}y_v.
\end{align*}

By the construction of $f$ we have $\mathcal{I}_v(x_v,s)\neq 0$ unless $x_vT_v^{-1}-\gamma_v\ll_{\varepsilon} T_v^{-1/2+\varepsilon},$ where $\gamma_v$ is determined by $\tau\in\hat{\mathfrak{g}}$, cf. \textsection\ref{3.2.1}. Moreover, by decaying of Fourier transform of $f_v,$ $f_v\left(\begin{pmatrix}
	y_v&b_v\\
	&1
\end{pmatrix}\right)\ll T_v^{-\infty}$ if $|b_v|_v\gg T_v^{-1/2+\varepsilon}.$ Together with \eqref{250},
\begin{equation}\label{45}
\mathcal{I}_v(x_v,s)\ll |x_v|_v^{1+2\varepsilon}\textbf{1}_{x_vT_v^{-1}-\gamma_v\ll_{\varepsilon} T_v^{-1/2+\varepsilon}}\cdot T_{v}^{1+\varepsilon}\cdot T_v^{-1/2+\varepsilon}\cdot T_v^{-1/2+\varepsilon}+O(T_v^{-\infty}),
\end{equation}
where the factor $T_{v}^{1+\varepsilon}$ comes from the sup-norm estimate (cf. \eqref{250}), the first factor $T_v^{-1/2+\varepsilon}$ comes from the range of $y_v$ according to the support of $f_v$ (cf. \eqref{245}), and the second $T_v^{-1/2+\varepsilon}$ comes from the essential range of $b_v,$ i.e., $|b_v|_v\ll T_v^{-1/2+\varepsilon}.$ In particular, the implied constant in \eqref{45} depends only on $F_v,$ $\varepsilon,$ and $c_v,$ $C_v$ at $v\mid\infty.$ 

As a consequence, we have
\begin{align*}
J_{\sm,v}(s)=\int_{F_v^{\times}}\mathcal{I}_v(x_v,s)d^{\times}x_v\ll T_{v}^{\varepsilon}\int_{F_v}|x_v|_v^{2\varepsilon}\textbf{1}_{x_vT_v^{-1}-\gamma_v\ll_{\varepsilon} T_v^{-\frac{1}{2}+\varepsilon}}dx_v+O(T_v^{-\infty}),
\end{align*}
which is $\ll T_v^{1/2+\varepsilon}.$ Then \eqref{5.3} follows.
\end{proof}

\subsection{Proof of Proposition \ref{prop12}}\label{4.1.4}
Let $\varepsilon>0$ be a constant. Let $\mathcal{C}:=\{s\in\mathbb{C}:\ |s|=\varepsilon\}$ be the circle of radius $\varepsilon$ (cf. Lemma \ref{lem5.2}). By Cauchy formula, 
\begin{align*}
J^{\Reg}_{\Geo,\sm}(f,\chi)=\frac{1}{2\pi i}\int_{\mathcal{C}}\frac{J^{\Reg}_{\Geo,\sm}(f,\textbf{s},\chi)}{s}ds
\end{align*}

Substituting \eqref{5.2} into the above integral,
\begin{align*}
J^{\Reg}_{\Geo,\sm}(f,\chi)=\frac{V_F}{2\pi i}\int_{\mathcal{C}}\frac{N_{F}(\mathfrak{D}_{F})^{\frac{1}{2}+s} \zeta_F(1+2s)\prod_{v\mid\infty}J_{\sm,v}(s)}{s}ds.
\end{align*}

By Lemma \ref{lem5.2} we have 
\begin{align*}
J^{\Reg}_{\Geo,\sm}(f,\chi)\ll V_F\int_{\mathcal{C}}\frac{N_{F}(\mathfrak{D}_{F})^{1/2+\varepsilon} T^{1/2+\varepsilon}}{|\varepsilon|}\cdot\max_{s\in\mathcal{C}}|\zeta_F(1+2s)|ds\ll M^{1+\varepsilon}T^{\frac{1}{2}+\varepsilon}.
\end{align*}
Hence, Proposition \ref{prop12} follows. 

\section{The Geometric Side: the Orbital Integral $J_{\Geo,\du}^{\bi}(f,\chi)$}\label{sec5}
Let $f=\otimes_vf_v$ be text function constructed in  \textsection\ref{3.2}. Let $\mathbf{s}=(s,0)$ with $s\in\mathbb{C}.$ Recall the definition in \textsection\ref{rref}: 
\begin{align*}
J^{\Reg}_{\Geo,\du}(f,\chi):=\Big[J^{\Reg}_{\Geo,\du}(f,\textbf{s},\chi)-\underset{s=0}{\Res}\ J^{\Reg}_{\Geo,\du}(f,\textbf{s},\chi)\Big]_{s=0},
\end{align*}
where, for $\Re(s)>0,$ the dual  orbital integral is defined by (cf. \textsection\ref{sec3.2.2}): 
\begin{align*}
J_{\Geo,\du}^{\bi}(f,\textbf{s},\chi):=\int_{\mathbb{A}_F^{\times}}\int_{\mathbb{A}_F^{\times}}f\left(\begin{pmatrix}
	1\\
	x&1
\end{pmatrix}\begin{pmatrix}
	y\\
	&1
\end{pmatrix}\right)|x|^{s}\overline{\chi}(y)d^{\times}yd^{\times}x,
\end{align*}
which is a Tate integral representing $\Lambda(2s,\textbf{1}_F).$ Let $s<0.$ By Poisson summation (or equivalently, the functional equation), 
$J^{\Reg}_{\Geo,\du}(f,\textbf{s},\chi)$ becomes
\begin{align*}
\int_{\mathbb{A}_F^{\times}}\int_{\mathbb{A}_F^{\times}}\int_{\mathbb{A}_F}f\left(\begin{pmatrix}
	1\\
	b&1
\end{pmatrix}\begin{pmatrix}
	y\\
	&1
\end{pmatrix}\right)\psi(bx)|x|^{1-s}\overline{\chi}(y)dbd^{\times}yd^{\times}x.
\end{align*}
 
\begin{prop}\label{prop17}
Let notation be as before. Then 
\begin{equation}\label{41}
J^{\Reg}_{\Geo,\du}(f,\chi)\ll M^{1+\varepsilon}T^{\frac{1}{2}+\varepsilon},
\end{equation}
where the implied constant depends only on $F,$ $\varepsilon,$ and $c_v,$ $C_v$ at $v\mid\infty,$ cf. \textsection \ref{2.1.2}.
\end{prop}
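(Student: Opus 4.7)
The plan is to mirror the argument for Proposition \ref{prop12}, using that after the Tate-type factorization, $J^{\Reg}_{\Geo,\du}(f,\mathbf{s},\chi)$ is a product of local integrals whose finite-place values are essentially explicit and whose archimedean-place values can be controlled by the support and size estimates for $\tilde{f}_v$ coming from Nelson's construction.

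Concretely, for $\Re(s)<0$, write $J^{\Reg}_{\Geo,\du}(f,\mathbf{s},\chi)=\prod_{v\in\Sigma_F}J_{\du,v}(s)$, where
\begin{align*}
J_{\du,v}(s):=\int_{F_v^{\times}}|x_v|_v^{1-s}\int_{F_v^{\times}}\int_{F_v}f_v\!\left(\begin{pmatrix}1\\ b_v&1\end{pmatrix}\begin{pmatrix}y_v\\ &1\end{pmatrix}\right)\psi_v(b_vx_v)\overline{\chi}_v(y_v)\,db_v\,d^{\times}y_v\,d^{\times}x_v.
\end{align*}
At nonarchimedean $v$, by the same kind of unfolding that yields \eqref{19} (the relevant computation is recorded in \cite[Lemma 4.2]{Yan23c} or can be redone by the same argument using the bi-$K_v[m_v]$ behavior of $f_v$ and the Gauss-sum normalization at $v\mid\mathfrak{Q}$), one obtains a closed form in $\Vol(K_v[m_v])^{-1}$, a local different factor, and a local zeta factor $\zeta_{F_v}(2s)$ for $v\nmid\mathfrak{Q}$. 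Collecting the finite places produces a global factor of the shape $V_F\cdot N_F(\mathfrak{D}_F)^{1/2-s}\zeta_F(2s)$ times the archimedean piece, with $V_F\asymp M^{1+o(1)}$, exactly parallel to \eqref{5.2}.

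At an archimedean $v$, estimate $J_{\du,v}(s)$ on the small circle $\mathcal{C}=\{|s|=\varepsilon\}$ just as in Lemma \ref{lem5.2}. By \eqref{245} the inner matrix $\begin{pmatrix}y_v&0\\ b_vy_v&1\end{pmatrix}$ can contribute only when $y_v=1+O(T_v^{-\varepsilon})$ and $b_vy_v=O(T_v^{-\varepsilon})$, and by the rapid decay of the Fourier transform of $\tilde f_v$ in the lower-triangular (normal) direction, only $|b_v|_v\ll T_v^{-1/2+\varepsilon}$ contributes nontrivially; this is where the transversality estimate \eqref{2.7} via $d_v(g)$ enters, applied to $g=\begin{pmatrix}1\\ b_v&1\end{pmatrix}$, whose $d_v$-value involves the lower-triangular entry $b_v$ as well as its inverse, so the same $T_v^{-1/2+\varepsilon}$ saving holds. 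The Fourier dual variable $x_v$ is then confined to $x_vT_v^{-1}-\gamma_v\ll T_v^{-1/2+\varepsilon}$ by decay in $b_v$. Using the sup-norm bound $\|\tilde f_v\|_\infty\ll T_v^{1+\varepsilon}$ from \eqref{250}, the same accounting as in the proof of Lemma \ref{lem5.2} gives
\begin{equation*}
\prod_{v\mid\infty}J_{\du,v}(s)\ll T^{\frac{1}{2}+\varepsilon},\qquad s\in\mathcal{C},
\end{equation*}
with implied constant depending only on $F$, $\varepsilon$, $c_v$, $C_v$.

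Finally, apply Cauchy's formula on $\mathcal{C}$ to extract the regularized value:
\begin{align*}
J^{\Reg}_{\Geo,\du}(f,\chi)=\frac{1}{2\pi i}\int_{\mathcal{C}}\frac{J^{\Reg}_{\Geo,\du}(f,\mathbf{s},\chi)}{s}\,ds\ll V_F\cdot\max_{s\in\mathcal{C}}\bigl|\zeta_F(2s)\bigr|\cdot T^{\frac{1}{2}+\varepsilon}\ll M^{1+\varepsilon}T^{\frac{1}{2}+\varepsilon},
\end{align*}
yielding \eqref{41}. The main obstacle is the archimedean estimate: one must exploit the symmetry of $d_v(g)$ (which treats both $g$ and $g^{-1}$, hence upper and lower unipotent parts on equal footing) to control $\tilde f_v$ on the lower-triangular slice appearing after Poisson summation; once this transversality input is in hand, the calculation proceeds exactly as in the small-cell case.
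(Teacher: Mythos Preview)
Your overall strategy---factorize, handle the unramified finite places as in the small-cell case, bound the archimedean piece by Nelson's support/transversality input, and finish with Cauchy on a small circle---is exactly the paper's approach, and your archimedean sketch is an acceptable unpacking of what the paper records as Lemma~\ref{lem6.2.}.

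There is, however, a genuine gap at the ramified nonarchimedean places $v\mid\mathfrak{Q}$. You write that ``by the same kind of unfolding that yields \eqref{19}'' one obtains a closed form at all finite $v$, and then collect everything into $V_F\cdot N_F(\mathfrak{D}_F)^{1/2-s}\zeta_F(2s)$. But \eqref{19} and \cite[Lemma~4.2]{Yan23c} concern the \emph{small-cell} integral, where the unipotent is upper-triangular and therefore aligned with the double $\alpha,\beta$-average built into $f_v$ at $v\mid\mathfrak{Q}$; there the ramified local factor collapses to a unit indicator in $x_v$ and is indeed trivial. In the dual case the unipotent $\begin{pmatrix}1&\\ b_v&1\end{pmatrix}$ sits on the \emph{opposite} side, and the interaction with the $\alpha,\beta$-twist is no longer immediate. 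The paper carries out this computation separately: one first shows that the support forces $y_v\in\mathcal{O}_v^\times$ and $b_v\in\varpi_v^{m_v}\mathcal{O}_v$, and then expands the remaining $b_v$-integral as a sum over $m\ge m_v$ of a character sum $G(m)$ times a Ramanujan sum $R(m,x_v)$, yielding
\[
J_{\du,v}(s)\ \ll\ (m_v+2n_v)\,\Vol(K_v[m_v])^{-1},\qquad v\mid\mathfrak{Q}.
\]
This bound is what allows $\prod_{v\mid\mathfrak{Q}}J_{\du,v}(s)$ to be absorbed into $V_F^{1+\varepsilon}$ in the final Cauchy step. Without it your ``Collecting the finite places'' sentence is unjustified.

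A related symptom: the correct global factor is the \emph{partial} zeta function $\zeta_F^{(\mathfrak{Q})}(1+2s)$ (times $\prod_{v\mid\mathfrak{Q}}J_{\du,v}(s)$ left over), not $\zeta_F(2s)$. The pole at $s=0$ matches the one in $J^{\Reg}_{\Geo,\sm}$ (their residues cancel, cf.\ \textsection\ref{rref}); your $\zeta_F(2s)$ has no pole at $s=0$, which is a sign that the finite-place bookkeeping went astray. Fix the ramified computation and this will correct itself.
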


Write
\begin{align*}
J^{\Reg}_{\Geo,\du}(f,\textbf{s},\chi)=\prod_{v\in\Sigma_F}J_{\du,v}(s),
\end{align*}
where $\Re(s)<0,$ and each $J_{\du,v}(s)$ is defined by 
\begin{align*}
\int_{F_v^{\times}}\int_{F_v^{\times}}\int_{F_v}f_v\left(\begin{pmatrix}
	1\\
	b_v&1
\end{pmatrix}\begin{pmatrix}
	y_v\\
	&1
\end{pmatrix}\right)\psi_v(b_vx_v)|x_v|_v^{1-s}\overline{\chi}_v(y_v)db_vd^{\times}y_vd^{\times}x_v.
\end{align*}

Similar to \eqref{19} and \eqref{5.2} we have
\begin{align*}
J^{\Reg}_{\Geo,\du}(f,\textbf{s},\chi)=V_F\cdot N_{F}^{(\mathfrak{Q})}(\mathfrak{D}_{F})^{\frac{1}{2}+s} \zeta_F^{(\mathfrak{Q})}(1+2s)\prod_{v\mid\mathfrak{Q}}J_{\du,v}(s)\prod_{v\mid\infty}J_{\du,v}(s),
\end{align*}
where $V_F:=\prod_{v<\infty}\Vol(K_v[m_v])^{-1}\asymp |\mathfrak{M}|^{1+o(1)},$ $$N_{F}^{(\mathfrak{Q})}(\mathfrak{D}_{F})=\prod_{\substack{\mathfrak{p}\mid \mathfrak{D}_F\\ \mathfrak{p}+\mathfrak{Q}=\mathcal{O}_F}}N_F(\mathfrak{p}),\ \ \zeta_F^{(\mathfrak{Q})}(1+2s):=\prod_{\substack{\mathfrak{p}\ \text{prime}\\
\mathfrak{p}+\mathfrak{Q}=\mathcal{O}_F}}\frac{1}{1-N_F(\mathfrak{p})^{-1-2s}}.
$$

\subsection{Local estimates at ramified places}
At $v\mid\mathfrak{Q},$ by definition 
$$
f_v\left(\begin{pmatrix}
	1\\
	b_v&1
\end{pmatrix}\begin{pmatrix}
	y_v\\
	&1
\end{pmatrix}\right)=0
$$ 
unless  
\begin{align*}
z_v\begin{pmatrix}
	1&\alpha \varpi_v^{-n_v}\\
	&1
\end{pmatrix}\begin{pmatrix}
	1\\
	b_v&1
\end{pmatrix}\begin{pmatrix}
	y_v\\
	&1
\end{pmatrix}\begin{pmatrix}
	1&\beta \varpi_v^{-n_v}\\
	&1
\end{pmatrix}\in K_v[m_v]
\end{align*}
for some $\alpha,\beta\in (\mathcal{O}_v/\varpi_v^{n_v}\mathcal{O}_v)^{\times},$ and $z_v\in F_v^{\times},$ i.e., 
\begin{equation}\label{36}
z_v\begin{pmatrix}
	y_v+\alpha b_vy_v\varpi_v^{-n_v}&(y_v+\alpha b_vy_v\varpi_v^{-n_v})\beta \varpi_v^{-n_v}+\alpha \varpi_v^{-n_v}\\
	b_vy_v&1+\beta b_vy_v\varpi_v^{-n_v}
\end{pmatrix}\in K_v[m_v].
\end{equation}

Analyzing the $(2,1)$-th entry of the matrix on the LHS of \eqref{36} yields that $e_v(z_v)+e_v(b_v)+e_v(y_v)\geq m_v\geq n_v.$ Hence an investigation of the $(1,1)$-th and $(2,2)$-th entry leads to  
\begin{align*}
\begin{cases}
e_v(y_v)+2e_v(z_v)=0\\
e_v(z_v)+e_v(y_v)\geq 0,\ \ e_v(z_v)\geq 0.
\end{cases}
\end{align*}
As a consequence, $e_v(z_v)=0,$ i.e., $z_v\in\mathcal{O}_v^{\times}.$ So $e_v(y_v)=0,$ $e_v(b_v)\geq m_v.$ 
 
Hence we have $f_v\left(\begin{pmatrix}
	1\\
	b_v&1
\end{pmatrix}\begin{pmatrix}
	y_v\\
	&1
\end{pmatrix}\right)=\textbf{1}_{\mathcal{O}_v^{\times}}(y_v)\textbf{1}_{\varpi_v^{m_v}\mathcal{O}_v}(b_v).$ After a change of variable (i.e., $\beta\mapsto y_v^{-1}\beta$),
\begin{align*}
\mathcal{J}_v(x_v)=\frac{|\tau(\chi_v)|^{-2}}{\Vol(K_v[m_v])}\sum_{\alpha,\beta}\int_{\varpi_v^{m_v}\mathcal{O}_v} \textbf{1}_{K_v[m_v]}\left(X_v\right)\psi_v(b_vx_v)\chi_v(\alpha)\overline{\chi}_v(\beta)db_v,
\end{align*}
where $X_v$ denotes the matrix
\begin{align*}
\begin{pmatrix}
	1+\alpha b_v\varpi_v^{-n_v}&(1+\alpha b_v\varpi_v^{-n_v})\beta \varpi_v^{-n_v}+\alpha \varpi_v^{-n_v}\\
	b_v&1+\beta b_v\varpi_v^{-n_v}
\end{pmatrix}.
\end{align*}

Note that $\textbf{1}_{K_v[m_v]}\left(X_v\right)\neq 0$ unless $(1+\alpha b_v\varpi_v^{-n_v})\beta +\alpha\in \varpi_v^{n_v}\mathcal{O}_v.$ Hence, 
\begin{align*}
\mathcal{J}_v(x_v)=\frac{|\tau(\chi_v)|^{-2}\chi_v(-1)}{\Vol(K_v[m_v])}\sum_{\alpha\in (\mathcal{O}_v/\varpi_v^{n_v}\mathcal{O}_v)^{\times}}\int_{\varpi_v^{m_v}\mathcal{O}_v} \psi_v(b_vx_v)\chi_v(1+\alpha b_v\varpi_v^{-n_v})db_v.
\end{align*}

Write $b_v=\varpi_v^m\gamma_v,$ $\gamma_v\in\mathcal{O}_v^{\times}.$ Changing the variable $\alpha\mapsto \gamma_v^{-1}\alpha,$  
\begin{align*}
\mathcal{J}_v(x_v)=\frac{|\tau(\chi_v)|^{-2}\chi_v(-1)}{\Vol(K_v[m_v])\zeta_{F_v}(1)}\sum_{m\geq m_v}q_v^{-m}G(m)R(m,x_v).
\end{align*}
where $G$ is the character sum
\begin{align*}
G(m):=\sum_{\alpha\in (\mathcal{O}_v/\varpi_v^{n_v}\mathcal{O}_v)^{\times}}\chi_v(1+\alpha \varpi_v^{m-n_v}),
\end{align*}
and $R(m,x_v)$ is the Ramanujan sum 
\begin{align*}
R(m,x_v):=\int_{\mathcal{O}_v^{\times}} \psi_v(\gamma_v\varpi_v^mx_v)d^{\times}\gamma_v.
\end{align*}

Applying the trivial bound $G(m)\ll q_v^{n_v},$ and $R(m,x_v)=0$ if $m<-e_v(x_v)-1,$ $R(m,x_v)\ll q_v^{-1}$ if $m=-e_v(x_v)-1,$ and $R(m,x_v)=1$ if $m\geq -e_v(x_v),$ we then deduce that 
\begin{equation}\label{6.3.}
J_{\du,v}(s)\ll (m_v+2n_v)\Vol(K_v[m_v])^{-1}.
\end{equation}

\subsection{Local estimates at archimedean places}
Similar to Lemma \ref{lem5.2} we have  
\begin{lemma}\label{lem6.2.}
Let notation be as before. Let $\varepsilon>0$ be a constant. Let $\mathcal{C}:=\{s\in\mathbb{C}:\ |s|=\varepsilon\}$ be the circle of radius $\varepsilon.$ Then for $s\in\mathcal{C},$ we have
\begin{align*}
\prod_{v\mid\infty}J_{\du,v}(s)\ll T^{\frac{1}{2}+\varepsilon},
\end{align*}
where the implied constant depends on $F,$ $\varepsilon,$ $c_v$ and $C_v,$ $v\mid\infty.$ 
\end{lemma}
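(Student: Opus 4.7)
The plan is to transcribe the proof of Lemma~\ref{lem5.2} to the dual Bruhat cell. Writing
\[
g_v := \begin{pmatrix} y_v & 0 \\ b_v y_v & 1 \end{pmatrix}\quad\text{and}\quad \mathcal{J}_v(x_v,s):=\int_{F_v^\times}\!\int_{F_v} f_v(g_v)\psi_v(b_vx_v)\overline{\chi}_v(y_v)\,db_v\,d^\times y_v,
\]
one has $J_{\du,v}(s)=\int_{F_v^\times}|x_v|_v^{1-s}\mathcal{J}_v(x_v,s)\,d^\times x_v$, and I will extract the same three savings that drove Lemma~\ref{lem5.2}: the sup-norm $\|\tilde f_v\|_\infty\ll T_v^{1+\varepsilon}$ from \eqref{250}; a factor $T_v^{-1/2+\varepsilon}$ from transversality in the $y_v$-variable; and a factor $T_v^{-1/2+\varepsilon}$ from the effective range of $b_v$.

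Concretely, the support \eqref{245} (combined with the central averaging in \eqref{3.}) forces $y_v=1+O(T_v^{-\varepsilon})$ and $|b_v|_v\ll T_v^{-\varepsilon}$, while a direct check from \eqref{dg} gives $d_v(g_v)\asymp |b_v|_v$ on this support. Inserting these into \eqref{2.7} yields
\[
|\tilde f_v(g_v)|\ll T_v^{1+\varepsilon}\min\!\Bigl\{1,\tfrac{T_v^{-1/2+\varepsilon}}{|b_v|_v}\Bigr\}\mathbf{1}_{|\Ad^*(g_v)\tau-\tau|\ll T_v^{-1/2+\varepsilon}}.
\]
The $db_v$-integration over $|b_v|_v\ll T_v^{-\varepsilon}$ then contributes $\ll T_v^{1/2+\varepsilon}$ (the logarithmic factor from splitting at $|b_v|_v=T_v^{-1/2+\varepsilon}$ absorbed into $T_v^\varepsilon$), and Proposition~\ref{prop3.1} applied to the $\Ad^*$-constraint, exactly as in Lemma~\ref{lem5.2}, cuts the admissible $y_v$-range to $d^\times y_v$-measure $\ll T_v^{-1/2+\varepsilon}$. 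Oscillation in $\psi_v(b_vx_v)$, combined with the microlocal structure of $\tilde f_v$ in the $b_v$-direction (stationary phase read off from $\tau$), then localises $x_v$ to a window $\mathcal{W}_v$ of length $\ll T_v^{1/2+\varepsilon}$ centred near a point of size $\asymp T_v$, outside of which $\mathcal{J}_v(x_v,s)=O(T_v^{-\infty})$. Thus $\mathcal{J}_v(x_v,s)\ll T_v^\varepsilon\mathbf{1}_{x_v\in\mathcal{W}_v}+O(T_v^{-\infty})$, and
\[
J_{\du,v}(s)\ll T_v^\varepsilon\int_{\mathcal{W}_v}|x_v|_v^{-\Re(s)}\,dx_v\ll T_v^{1/2+\varepsilon}
\]
uniformly for $s\in\mathcal{C}$; multiplying over $v\mid\infty$ finishes the proof.

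The one genuinely new point compared with Lemma~\ref{lem5.2} is the identification of the window $\mathcal{W}_v$: here the Whittaker variable $b_v$ sits in the $(2,1)$-entry rather than the $(1,2)$-entry, so the stationary-phase direction in $b_v$ must be read off from $\tau$ after the involution $g\mapsto{}^tg^{-1}$ implicit in the convolution structure $f_v=f_v(\cdot,\chi_v)*f_v(\cdot,\chi_v)^*$ of \eqref{3.}. This is the main obstacle; once it is settled, the remainder is a direct mirror of Lemma~\ref{lem5.2}.
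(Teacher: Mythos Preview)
Your plan---mirror Lemma~\ref{lem5.2}---is exactly what the paper does (its proof is the single phrase ``Similar to Lemma~\ref{lem5.2}''), and most of your write-up is fine. But there is a real gap in the way you extract the two factors of $T_v^{-1/2+\varepsilon}$: you are using the transversality saving twice.

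The factor $\min\{1,T_v^{-1/2+\varepsilon}/d_v(g_v)\}$ that you pull from \eqref{2.7} \emph{is} Proposition~\ref{prop3.1}. That proposition is a volume bound for a \emph{single} integration over $z\in A$; in your decomposition $g_v=n(b_v)a(y_v)$ the $A$-variable is $y_v$, and $d_v(n(b_v))\asymp|b_v|$, so what it actually says is
\[
\int \mathbf{1}_{|\Ad^*(g_v)\tau-\tau|\ll T_v^{-1/2+\varepsilon}}\,d^\times y_v\ \ll\ \min\Bigl\{1,\tfrac{T_v^{-1/2+\varepsilon}}{|b_v|}\Bigr\}.
\]
Using this $\min$-factor to do the $db_v$-integral and \emph{then} invoking Proposition~\ref{prop3.1} again to claim an independent $y_v$-saving of $T_v^{-1/2+\varepsilon}$ is double-counting. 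With transversality alone one only gets $\iint|\tilde f_v(g_v)|\,d^\times y_v\,db_v\ll T_v^{1/2+\varepsilon}$, and after the $x_v$-window this yields $T_v^{1+\varepsilon}$, which is too weak.

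The actual proof of Lemma~\ref{lem5.2} does not use Proposition~\ref{prop3.1} at all. Its two $T_v^{-1/2+\varepsilon}$ factors come from \emph{independent} sources: Fourier decay of $\tilde f_v$ in the nilpotent variable (forcing $|b_v|\ll T_v^{-1/2+\varepsilon}$), and the pointwise $\Ad^*$-constraint in \eqref{245} (forcing $|y_v-1|\ll T_v^{-1/2+\varepsilon}$ directly). Both transfer verbatim to the dual cell, since the bump $\tilde f_v^\wedge$ has the same tangential scale $T_v^{1/2+\varepsilon}$ in the $(2,1)$ direction as in the $(1,2)$ direction. In particular the ``genuinely new point'' you flag---reading the stationary direction through the convolution in \eqref{3.} and the involution $g\mapsto{}^tg^{-1}$---is not needed: the $x_v$-window of width $T_v^{1/2+\varepsilon}$ about a point $\asymp T_v$ arises from the same microlocal structure as in Lemma~\ref{lem5.2}, just read off in the transposed nilpotent coordinate.
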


\subsection{Proof of Proposition \ref{prop17}} 
Let $\varepsilon>0$ be a constant. Let $\mathcal{C}:=\{s\in\mathbb{C}:\ |s|=\varepsilon\}$ be the circle of radius $\varepsilon$ (cf. Lemma \ref{lem6.2.}). By Cauchy formula, 
\begin{align*}
J^{\Reg}_{\Geo,\du}(f,\chi)=\frac{1}{2\pi i}\int_{\mathcal{C}}\frac{J^{\Reg}_{\Geo,\du}(f,\textbf{s},\chi)}{s}ds
\end{align*}

Plugging the expression of $J^{\Reg}_{\Geo,\du}(f,\textbf{s},\chi)$ into the above integral,
\begin{align*}
J^{\Reg}_{\Geo,\sm}(f,\chi)=\frac{V_F}{2\pi i}\int_{\mathcal{C}}\frac{N_{F}^{(\mathfrak{Q})}(\mathfrak{D}_{F})^{\frac{1}{2}+s} \zeta_F^{(\mathfrak{Q})}(1+2s)\prod_{v\mid\mathfrak{Q}}J_{\du,v}(s)\prod_{v\mid\infty}J_{\du,v}(s)}{s}ds.
\end{align*}

By the estimate \eqref{6.3.} and Lemma \ref{lem6.2.} we have 
\begin{align*}
J^{\Reg}_{\Geo,\du}(f,\chi)\ll V_F^{1+\varepsilon}\int_{\mathcal{C}}\frac{N_{F}(\mathfrak{D}_{F})^{\frac{1}{2}+\varepsilon} T^{\frac{1}{2}+\varepsilon}}{|\varepsilon|}\cdot\max_{s\in\mathcal{C}}|\zeta_F^{(\mathfrak{Q})}(1+2s)|ds\ll M^{1+\varepsilon}T^{\frac{1}{2}+\varepsilon}.
\end{align*}
Hence, Proposition \ref{prop17} follows.

\section{The Geometric Side: Regular Orbital Integrals}\label{sec6}
Recall the definition \eqref{17...} in \textsection\ref{sec3.7}: 
\begin{align*}
J^{\Reg,\RNum{2}}_{\Geo,\bi}(f,\textbf{0},\chi):=\sum_{t\in F-\{0,1\}}\prod_{v\in\Sigma_F} \mathcal{E}_v(t),
\end{align*}
where for $v\in\Sigma_F,$ 
\begin{equation}\label{51..}
\mathcal{E}_v(t):=\int_{F_v^{\times}}\int_{F_v^{\times}}f_v\left(\begin{pmatrix}
	y_v&x_v^{-1}t\\
	x_vy_v&1
\end{pmatrix}\right)\overline{\chi}_v(y_v)d^{\times}y_vd^{\times}x_v.
\end{equation}

By Theorem 5.6 in \cite{Yan23a} (or \cite{RR05}) the orbital integrals $J^{\Reg,\RNum{2}}_{\Geo,\bi}(f,\textbf{0},\chi)$ converges absolutely. We shall establish an upper bound for it as follows. 
\begin{thmx}\label{thmD}
Let notation be as before. Then 
\begin{align*}
J^{\Reg,\RNum{2}}_{\Geo,\bi}(f,\textbf{0},\chi)\ll T^{\varepsilon}M^{\varepsilon}Q^{1+\varepsilon}\cdot \textbf{1}_{M\ll Q^2\gcd(M,Q)},
\end{align*}	
where the implied constant depends on $\varepsilon,$ $F,$ $c_v,$ and $C_v,$ $v\mid\infty$. Here $T, M,$  and $Q$ are defined in \textsection \ref{sec2.1}. In particular, $J^{\Reg,\RNum{2}}_{\Geo,\bi}(f,\textbf{0},\chi)=0$ if $M$ is large enough.
\end{thmx}
\begin{remark}
The observation that $J^{\Reg,\RNum{2}}_{\Geo,\bi}(f,\textbf{0},\chi)=0$ for large $M$ aligns with the calculation in \cite{FW09}, despite the distinct nature of the regular orbital integrals involved.
\end{remark}

\subsection{Local Estimates: unramified nonarchimedean places}
The following straightforward calculation can be found in \cite[Lemma 6.2]{Yan23c}.
\begin{lemma}\label{lem7.2}
Let $v\in \Sigma_{F,\fin}$ be such that $v\nmid\mathfrak{Q}.$ Then 
\begin{align*}
\mathcal{E}_v(t)\ll \frac{(1-e_v(1-t))(1+e_v(t)-2e_v(1-t))}{\Vol(K_v[m_v])}\textbf{1}_{\substack{e_v(t-1)\leq 0\\ e_v(t)-e_v(1-t)\geq m_v}}.
\end{align*}
Moreover, $\mathcal{E}_v(t)=1$ if $e_v(t)=e_v(1-t)=0,$ $m_v=0,$ and $v\nmid \mathfrak{D}_F.$ In particular, $\mathcal{E}_v(t)=1$ for all but finitely many $v$'s.
\end{lemma}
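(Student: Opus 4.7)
The plan is a direct local calculation. Since $v \nmid \mathfrak{Q}$ we have $n_v = 0$, so the $\alpha,\beta$-averages from \textsection\ref{2.2.2} collapse and $f_v$ reduces to the function $f_v(\cdot;\omega_v)$ of \eqref{5.}, supported on $Z(F_v)K_v[m_v]$ and of absolute value $\Vol(\overline{K_v[m_v]})^{-1}$ there. Thus the integrand of $\mathcal{E}_v(t)$ vanishes unless there exists some $z_v \in F_v^{\times}$ with
$$
z_v^{-1}\begin{pmatrix} y_v & x_v^{-1}t \\ x_vy_v & 1 \end{pmatrix} \in K_v[m_v].
$$

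Next, I would read off the four entries and the determinant condition to obtain the valuation system
$$
e_v(z_v)\leq 0,\quad e_v(z_v)\leq e_v(y_v),\quad e_v(z_v)+e_v(x_v)\leq e_v(t),
$$
$$
-e_v(z_v)+e_v(x_v)+e_v(y_v)\geq m_v,\quad e_v(y_v)+e_v(1-t)=2e_v(z_v).
$$
Eliminating $e_v(y_v)$ via the determinant relation, the first two inequalities combine to $e_v(1-t)\leq e_v(z_v)\leq 0$ (in particular $e_v(1-t)\leq 0$), which gives at most $1-e_v(1-t)$ admissible values of $e_v(z_v)$. The remaining two inequalities then confine $e_v(x_v)$ to an interval of length $1+e_v(t)-m_v-e_v(1-t)$, which is nonempty precisely when $e_v(t)-e_v(1-t)\geq m_v$.

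With the support pinned down, I would bound $|\mathcal{E}_v(t)|$ by pulling out the constant $\Vol(\overline{K_v[m_v]})^{-1}$ and counting: for each admissible pair $(e_v(z_v),e_v(x_v))$ the value $e_v(y_v)$ is determined by the determinant relation, and the $d^{\times}x_v\,d^{\times}y_v$-mass of the corresponding unit-torus orbit is $O(1)$. This yields
$$
\mathcal{E}_v(t)\ll \frac{(1-e_v(1-t))(1+e_v(t)-m_v-e_v(1-t))}{\Vol(K_v[m_v])}\mathbf{1}_{\{e_v(1-t)\leq 0,\ e_v(t)-e_v(1-t)\geq m_v\}},
$$
which upgrades to the stated bound after weakening via $m_v\geq 0$ and $-e_v(1-t)\geq 0$. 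For the ``moreover'' clause, the specialization $e_v(t)=e_v(1-t)=m_v=0$ with $v\nmid\mathfrak{D}_F$ forces $e_v(z_v)=e_v(x_v)=e_v(y_v)=0$; the local measures are then standard, and a direct evaluation (using unramification of $\chi_v$ together with $\omega_v(E_{2,2}(k_v))$ being unit-modulus on the unit torus) gives $\mathcal{E}_v(t)=1$. The final assertion is immediate since $e_v(t)=e_v(1-t)=m_v=0$ and $v\nmid\mathfrak{D}_F$ hold at all but finitely many $v$.

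The main obstacle is not conceptual but notational: keeping the five coupled valuation constraints straight through the elimination of $e_v(y_v)$, and noting that although $\omega_v$ may still be ramified at $v\nmid\mathfrak{Q}$ (the conductor of $\omega_v$ only divides $\mathfrak{M}$, not $\mathfrak{Q}$), this ramification never affects the absolute-value estimate because $|\omega_v|\equiv 1$.
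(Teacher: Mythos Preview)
Your proposal is correct and follows precisely the direct local-calculation approach that the paper defers to \cite[Lemma 6.2]{Yan23c}: reduce $f_v$ to $f_v(\cdot;\omega_v)$ since $n_v=0$, read off the five valuation constraints from the $K_v[m_v]$-membership condition, eliminate $e_v(y_v)$ via the determinant, and count. Your observation that $\omega_v$ may be ramified but $|\omega_v|\equiv 1$ suffices for the $\ll$ bound is exactly right, and in the ``moreover'' clause the condition $m_v=0$ together with $M'\mid M$ forces $\omega_v$ unramified anyway, while $E_{2,2}(k_v)=1$ makes the exact evaluation go through.
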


\subsection{Local Estimates at Ramified Places $\Sigma_{\Ram}^-$}\label{sec6.2}
In this section, we consider the case where $v\in \Sigma_{\Ram}^-,$ specifically $v \mid \mathfrak{Q}$ and $m_v < n_v$. The local integrals $\mathcal{E}_v(t)$ demonstrate unique characteristics that distinguish them from those discussed in \cite[\textsection 6.2]{Yan23c}. This distinction sets them apart from the analysis presented in the aforementioned work.
\begin{prop}\label{neg}
Let $v\in \Sigma_{\Ram}^-.$ Then 
\begin{align*}
\mathcal{E}_v(t)\ll  \begin{cases}
q_v^{m_v+k} \ \ &\text{if $e_v(1-t)=-2k$ for $m_v-n_v\leq k\leq -1$}\\
(e_v(t)-e_v(1-t)+1)q_v^{m_v} \ \ &\text{if $e_v(t)-e_v(1-t)\geq 0$}\\
(1-e_v(t))^2q_v^{m_v} \ \ &\text{if $e_v(t)\leq -1$}\\
0\ \ &\text{otherwise,}
\end{cases}
\end{align*}
where the implied constant is absolute. 
\end{prop}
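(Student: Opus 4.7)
The plan is to unfold the definition of $f_v$ from \textsection\ref{2.2.2}, producing a double sum over $(\alpha,\beta)\in((\mathcal{O}_v/\varpi_v^{n_v})^{\times})^2$ weighted by $\chi_v(\alpha)\overline{\chi}_v(\beta)$, and to reduce the support condition of the integrand to
\[M(\alpha,\beta):=\begin{pmatrix} 1 & \alpha\varpi_v^{-n_v}\\ & 1\end{pmatrix}\begin{pmatrix} y_v & x_v^{-1}t\\ x_vy_v & 1\end{pmatrix}\begin{pmatrix} 1 & \beta\varpi_v^{-n_v}\\ & 1\end{pmatrix}\in Z(F_v)K_v[m_v].\]
Writing $a=e_v(x_v)$, $b=e_v(y_v)$, $r=e_v(t)$, $s=e_v(1-t)$, and $u=e_v(z_v)$ for the valuation of the central factor, the identity $\det M = y_v(1-t)$ yields $2u=b+s$, forcing $b+s\in 2\mathbb{Z}$. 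Containment in $K_v[m_v]$ then converts into valuation inequalities for the four entries of $M(\alpha,\beta)$; the interesting behavior is driven by potential cancellations in $A=y_v(1+\alpha x_v\varpi_v^{-n_v})$ and $D=1+x_vy_v\beta\varpi_v^{-n_v}$, which are available only when $a\ge n_v$, respectively $a+b\ge n_v$.

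Next, I would partition $(a,b,r,s)$-space according to the signs of $a-n_v$ and $a+b-n_v$, and the relative position of $r$ and $s$, and show that three regimes produce the three non-vacuous bounds. In the cancellation regime with $a=n_v$, the constraint $e_v(A)\ge u$ forces $\alpha\equiv-\varpi_v^{n_v}/x_v\pmod{\varpi_v^{k'}}$ for a suitable $k'$, and the parity $b+s\in 2\mathbb{Z}$ combined with the matching $k'=-s/2$ yields Case~1 with $s=-2k$. The non-cancellation regime with $a+b\ge n_v$ and $r\ge s$ gives $r-s+1$ admissible values of $b$, producing Case~2. Finally, the regime $r\le -1$, where the term $x_v^{-1}t$ in $B$ dominates, forces $(a,b)\in[r,0]\times[0,-r]$, producing the $(1-r)^2$ count of Case~3. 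In each case, bounding the character sums trivially by $|\chi_v|=1$, and combining $|\tau(\chi_v)|^{2}=q_v^{n_v}$ with $\Vol(\overline{K_v[m_v]})^{-1}\asymp q_v^{m_v}$, yields the claimed bounds; outside the union of these regimes, the system of valuation inequalities is inconsistent and $\mathcal{E}_v(t)=0$.

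The hard part will be the combinatorial bookkeeping of the admissible quadruples $(\alpha,\beta,x_v,y_v)$. Case~1 is the most delicate: the depth of the congruence imposed on $\alpha$ by the cancellation must exactly balance the $q_v^{-n_v}$ loss from the Gauss sum normalization, and the parity condition together with $a+b\ge u+m_v$ must conspire to give precisely the $q_v^{m_v+k}$ growth in $k$. The analogous analysis for $\beta$ in Case~2 contributes the $(r-s+1)$ multiplicity, while in Case~3 the range of $a$ is directly cut out by $r$. A careful accounting along these lines, modelled on but differing from the $\Sigma_{\Ram}^+$ analysis in \cite[\textsection 6.2]{Yan23c} (where $m_v\ge n_v$ simplifies the picture since both cancellations can be accommodated within the level), completes the proof.
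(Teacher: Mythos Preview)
Your strategy is the paper's: unfold $f_v$, convert the support condition $z_vM(\alpha,\beta)\in K_v[m_v]$ to valuation inequalities, and case-split. The paper, however, partitions directly by the central valuation $u=e_v(z_v)$ (there denoted $k$), with $k\le-1$, $k=0$, $k\ge1$ yielding exactly your Cases~1,~2,~3; your proxy via the signs of $a-n_v$ and $a+b-n_v$ does not separate them cleanly (both $k\le-1$ and $k=0$ force $a+b\ge n_v$), so you should switch to the $k$-trichotomy.

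Several details in your sketch need correcting. The determinant identity gives $2u+b+s=0$, not $2u=b+s$; integrality of $z_vA$ reads $e_v(A)\ge -u$, not $\ge u$. In Case~2 it is $a$ (not $b$) that ranges over an interval of length $r-s+1$, while $b=-s$ is pinned by the first constraint. In Case~3 the actual ranges are $a\in[n_v+r+1,\,n_v]$ and $b\in[r,\,-r-2]$, not $[r,0]\times[0,-r]$, though the count is still $\ll(1-r)^2$. And in Case~1 the congruence depth imposed on $\alpha$ is $-k=s/2>0$, not $-s/2$. With these fixes your outline coincides with the paper's argument.
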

\begin{proof}
By definition, $f_v\left(\begin{pmatrix}
	y_v&x_v^{-1}t\\
	x_vy_v&1
\end{pmatrix}
\right)=0$ unless 
\begin{equation}\label{48.}
\varpi_v^k\begin{pmatrix}
		1&\alpha \varpi_v^{-n_v}\\
		&1
	\end{pmatrix}\begin{pmatrix}
	y_v&x_v^{-1}t\\
	x_vy_v&1
\end{pmatrix}\begin{pmatrix}
		1&\beta \varpi_v^{-n_v}\\
		&1
	\end{pmatrix}\in K_v[m_v]
\end{equation}
for some $k\in\mathbb{Z}.$ Write $x_v=\varpi_v^{r_1}\gamma_1,$ $y_v=\varpi_v^{r_2}\gamma_2,$ where $r_1, r_2\in\mathbb{Z}$ and $\gamma_1, \gamma_2\in\mathcal{O}_v^{\times}.$ Then \eqref{48.} becomes
\begin{align*}
\varpi_v^k\begin{pmatrix}
		1&\gamma_1\alpha \varpi_v^{-n_v}\\
		&1
	\end{pmatrix}\begin{pmatrix}
	\varpi_v^{r_2}&\varpi_v^{-r_1}t\\
	\varpi_v^{r_1+r_2}&1
\end{pmatrix}\begin{pmatrix}
		1&\gamma_1\gamma_2\beta \varpi_v^{-n_v}\\
		&1
	\end{pmatrix}
\in K_v[m_v].
\end{align*}

Changing variables $\alpha\mapsto \gamma_1^{-1}\alpha,$ $\beta\mapsto \gamma_1^{-1}\gamma_2^{-1}\beta,$ the above constraint becomes 
\begin{equation}\label{49} 
\varpi_v^kY_{\alpha,\beta,r_1,r_2,t}\in K_v[m_v]
\end{equation}
for some $k\in\mathbb{Z},$ where $Y_{\alpha,\beta,r_1,r_2,t}$ is defined by 
\begin{align*}
\begin{pmatrix}
	\varpi_v^{r_2}+\alpha \varpi_v^{r_1+r_2-n_v}&(\varpi_v^{r_2}+\alpha \varpi_v^{r_1+r_2-n_v})\beta \varpi_v^{-n_v}+\varpi_v^{-r_1}t+\alpha \varpi_v^{-n_v}\\
	\varpi_v^{r_1+r_2}&1+\beta \varpi_v^{r_1+r_2-n_v}
\end{pmatrix}.
\end{align*} 

By definition the local integral $\mathcal{E}_v(t)$ becomes
\begin{align*}
\frac{1}{|\tau(\chi_v)|^2}\sum_{\alpha,\beta}\chi(\alpha)\overline{\chi}(\beta)\sum_{r_1, r_2\in\mathbb{Z}} f_v(Y_{\alpha,\beta,r_1,r_2,t};\omega_v),
\end{align*} 
where $f_v(\cdot;\omega_v)$ is defined by \eqref{5.} in \textsection\ref{2.2.2}. Note that \eqref{49} amounts to 
\begin{equation}\label{50}
\begin{cases}
2k+r_2+e_v(1-t)=0\\
k+r_1+r_2\geq m_v\\
\varpi_v^k(\varpi_v^{r_2}+\alpha \varpi_v^{r_1+r_2-n_v})\in\mathcal{O}_v\\
\varpi_v^k(1+\beta \varpi_v^{r_1+r_2-n_v})\in\mathcal{O}_v\\
\varpi_v^k\big[(\varpi_v^{r_2}+\alpha \varpi_v^{r_1+r_2-n_v})\beta \varpi_v^{-n_v}+\varpi_v^{-r_1}t+\alpha \varpi_v^{-n_v}\big]\in\mathcal{O}_v.
\end{cases}
\end{equation}

We will categorize our discussion into three cases based on the value of $k$: the case where $k\leq -1$ will be addressed in \textsection\ref{7.2.1} below, the case where $k=0$ will be addressed in \textsection\ref{7.2.2} below, and the case where $k\geq 1$ will be addressed in \textsection\ref{7.2.3} below. Proposition \ref{neg} can then be readily derived from these discussions.
\end{proof}

\subsubsection{The case that $k\leq -1$}\label{7.2.1}
Suppose $k\leq -1.$ Then \eqref{50} simplifies to  
\begin{align*}
\begin{cases}
2k+e_v(1-t)=0\\
m_v-n_v\leq k\leq -1\\
r_2=0, \ r_1=n_v\\
1+\beta\in\varpi_v^{-k}\mathcal{O}_v\\
1+\alpha\in\varpi_v^{-k}\mathcal{O}_v\\
\varpi_v^k\big[(1+\alpha )(1+\beta)\varpi_v^{-n_v}+\varpi_v^{-n_v}(t-1)\big]\in\mathcal{O}_v.
\end{cases}
\end{align*}

\begin{itemize}
	\item Suppose that $k=-n_v.$ Then $m_v=0,$ $e_v(1-t)=-2n_v,$ $\alpha=\beta=-1\pmod{\varpi_v^{n_v}}.$ So the contribution from this case is 
\begin{align*}
\frac{1}{|\tau(\chi_v)|^2}\textbf{1}_{e_v(1-t)=-2n_v}\textbf{1}_{m_v=0}=q_v^{-n_v}\textbf{1}_{e_v(1-t)=-2n_v}\textbf{1}_{m_v=0}.
\end{align*}

\item Suppose that $k>-n_v.$ Write $\alpha=-1+\varpi_v^{-k}\alpha',$ and $\beta=-1+\varpi_v^{-k}\beta',$ where $\alpha', \beta'\pmod{\varpi_v^{n_v+k}}.$ Then 
\begin{align*}
\varpi_v^k\big[(1+\alpha )(1+\beta)\varpi_v^{-n_v}+\varpi_v^{-n_v}(t-1)\big]\in\mathcal{O}_v
\end{align*}
becomes 
\begin{align*}
\alpha'\beta'+(t-1)\varpi_v^{2k}\in \varpi_v^{n_v+k}\mathcal{O}_v.
\end{align*}
So the contribution from this case is 
\begin{align*}
\frac{|\tau(\chi_v)|^{-2}}{\Vol(K_v[m_v])}\sum_{\max\{m_v-n_v,1-n_v\}\leq k\leq -1}\mathcal{S}(k),
\end{align*}
where 
\begin{align*}
\mathcal{S}(k):=\sum_{\substack{\alpha',\beta'\pmod{\varpi_v^{n_v+k}}\\
\alpha'\beta'=-(t-1)\varpi_v^{2k}\pmod{\varpi_v^{n_v+k}}}}\chi(1-\varpi_v^{-k}\alpha')\overline{\chi}(1-\varpi_v^{-k}\beta')\overline{\omega}_v(\varpi_v^{-k}\beta').
\end{align*}

Employing the trivial bound to $\mathcal{S}(k)$, we see that the corresponding contribution to $\mathcal{E}_v(t)$ in this case (i.e., $k>-n_v$) is 
\begin{align*}
\ll \Vol(K_v[m_v])^{-1}\sum_{\max\{m_v-n_v,1-n_v\}\leq k\leq -1}q_v^{k}\cdot \textbf{1}_{e_v(1-t)=-2k}.
\end{align*}
\end{itemize}

Therefore, the contribution to $\mathcal{E}_v(t)$ in the case that $k\leq -1$ is 
\begin{align*}
\ll \frac{q_v^{k}}{\Vol(K_v[m_v])}\sum_{m_v-n_v\leq k\leq -1}\textbf{1}_{e_v(1-t)=-2k}\ll q_v^{m_v+k}\sum_{m_v-n_v\leq k\leq -1}\textbf{1}_{e_v(1-t)=-2k}.
\end{align*}

\subsubsection{The case that $k=0$}\label{7.2.2}
Suppose that $k=0$ in \eqref{50}, which implies that 
\begin{equation}\label{52}
\begin{cases}
r_2+e_v(1-t)=0\\
r_1+r_2\geq m_v\\
\min\{r_2, r_1+r_2-n_v\}\geq 0\\
(\varpi_v^{r_2}+\alpha \varpi_v^{r_1+r_2-n_v})\beta \varpi_v^{-n_v}+\varpi_v^{-r_1}t+\alpha \varpi_v^{-n_v}\in\mathcal{O}_v.
\end{cases}
\end{equation}

Then $r_1+r_2\geq \max\{n_v,m_v\}=n_v,$ $e_v(t)-r_1\geq -n_v,$ and $e_v(1-t)=-r_2\leq 0.$ So $0\leq r_2=-e_v(t-1),$ and $n_v+e_v(1-t)\leq r_1\leq e_v(t)+n_v.$ Therefore, the contribution to $\mathcal{E}_v(t)$ from this case is
\begin{equation}\label{7.6}
\frac{\textbf{1}_{e_v(t)-e_v(1-t)\geq 0}}{|\tau(\chi_v)|^2\Vol(K_v[m_v])}\sum_{n_v+e_v(1-t)\leq r_1\leq e_v(t)+n_v}\mathcal{J}_1(r_1,t),
\end{equation}
where $\mathcal{J}_1(r_1,t)$ is defined by 
\begin{align*}
\sum_{\substack{\alpha,\beta\in (\mathcal{O}_v/\varpi_v^{n_v}\mathcal{O}_v)^{\times}\\ (\varpi_v^{r_2}+\alpha \varpi_v^{r_1-e_v(t-1)-n_v})\beta \varpi_v^{-n_v}+\varpi_v^{-r_1}t+\alpha \varpi_v^{-n_v}\in\mathcal{O}_v}}\chi(\alpha)\overline{\chi}(\beta)\overline{\omega}_v(1+\beta\varpi_v^{r_1-e_v(t-1)-n_v}).
\end{align*}

\begin{itemize}
\item Suppose $r_2\geq 1.$ Then $e_v(1-t)\leq -1,$ implying that $e_v(t)=e_v(1-t)=-r_2\leq -1.$ Hence, $-r_1+e_v(t)=-r_1-r_2\leq -n_v$ (from the third constraint in \eqref{52}). Along with the last condition in \eqref{52} we have $-r_1+e_v(t)\geq -n_v.$ So $-r_1+e_v(t)=-n_v,$ i.e., $r_1+r_2=n_v.$ Consequently,
\begin{align*}
\mathcal{J}_1(r_1,t)=\sum_{\substack{\alpha,\beta\in (\mathcal{O}_v/\varpi_v^{n_v}\mathcal{O}_v)^{\times}\\ (\varpi_v^{-e_v(t)}+\alpha )\beta +\varpi_v^{n_v-r_1}t+\alpha\equiv 0\pmod{\varpi_v^{n_v}}}}\chi(\alpha)\overline{\chi}(\beta)\overline{\omega}_v(1+\beta).
\end{align*}	
	
Write $t=\varpi_v^{e_v(t)}\gamma$ under the embedding $F^{\times}\hookrightarrow F_v^{\times},$ where $\gamma\in\mathcal{O}_v^{\times}.$ Then 
\begin{align*}
\mathcal{J}_1(r_1,t)=\sum_{\substack{\alpha,\beta\in (\mathcal{O}_v/\varpi_v^{n_v}\mathcal{O}_v)^{\times}\\ (\varpi_v^{-e_v(t)}+\alpha )(1+\beta)\equiv -\gamma+\varpi_v^{-e_v(t)}\pmod{\varpi_v^{n_v}}}}\chi(\alpha)\overline{\chi}(\beta)\overline{\omega}_v(1+\beta),
\end{align*}
which, after a change of variables, is equal to 
\begin{align*}
\mathcal{J}_1(r_1,t)=\sum_{\substack{\alpha,\beta\in (\mathcal{O}_v/\varpi_v^{n_v}\mathcal{O}_v)^{\times}\\ \alpha\beta\equiv -\gamma+\varpi_v^{-e_v(t)}\pmod{\varpi_v^{n_v}}}}\chi(\alpha-\varpi_v^{-e_v(t)})\overline{\chi}(\beta-1)\overline{\omega}_v(\beta).
\end{align*}

Since $-\gamma+\varpi_v^{-e_v(t)}\in \mathcal{O}_v^{\times},$ by the trivial bound,  we have $|\mathcal{J}_1(r_1,t)|\leq q_v^{n_v}.$

\item Suppose $r_2=0.$ Then $e_v(1-t)=0.$ Therefore, $\mathcal{J}_1(r_1,t)$ is equal to 
\begin{align*}
\sum_{\substack{\alpha,\beta\in (\mathcal{O}_v/\varpi_v^{n_v}\mathcal{O}_v)^{\times}\\ (1+\alpha \varpi_v^{r_1+r_2-n_v})\beta \varpi_v^{-n_v}+\varpi_v^{-r_1}t+\alpha \varpi_v^{-n_v}\in\mathcal{O}_v}}\chi(\alpha)\overline{\chi}(\beta)\overline{\omega}_v(1+\beta\varpi_v^{r_1-n_v}).
\end{align*}

Changing variable $\alpha\mapsto \overline{\alpha},$ the sum $\mathcal{J}_1(r_1,t)$ becomes
\begin{align*}
\sum_{\substack{\alpha,\beta\in (\mathcal{O}_v/\varpi_v^{n_v}\mathcal{O}_v)^{\times}\\ (\alpha+\varpi_v^{r_1-n_v})(\beta +\varpi_v^{n_v-r_1}t) \equiv t-1\pmod{\varpi_v^{n_v}}}}\overline{\chi}(\alpha)\overline{\chi}(\beta)\overline{\omega}_v(1+\beta\varpi_v^{r_1-n_v}).
\end{align*}

Changing variables $\alpha\mapsto \alpha-\varpi_v^{r_1-n_v}$ and $\beta\mapsto \beta-\varpi_v^{n_v-r_1}t,$ $\mathcal{J}_1(r_1,t)$ can be rewritten as   
\begin{align*}
\sum_{\substack{\alpha,\beta\in (\mathcal{O}_v/\varpi_v^{n_v}\mathcal{O}_v)^{\times}\\ \alpha\beta\equiv t-1\pmod{\varpi_v^{n_v}}}}\overline{\chi}(\alpha-\varpi_v^{r_1-n_v})\overline{\chi}(\beta-\varpi_v^{n_v-r_1}t)\overline{\omega}_v(1-t+\beta\varpi_v^{r_1-n_v}).
\end{align*}

Since $e_v(t-1)=0,$ then $\beta$ is uniquely determined by $\alpha.$ Hence the trivial bound yields $|\mathcal{J}_1(r_1,t)|\leq q_v^{n_v}.$ 
\end{itemize} 

Consequently, substituting the above discussions into \eqref{7.6} we then see that the contribution from this case is 
\begin{align*}
\ll \frac{\textbf{1}_{e_v(t)-e_v(1-t)\geq 0}}{|\tau(\chi_v)|^2\Vol(K_v[m_v])}\sum_{r_1}q_v^{m_v}\ll (e_v(t)-e_v(1-t)+1)q_v^{m_v}\textbf{1}_{e_v(t)-e_v(1-t)\geq m_v-n_v},
\end{align*}
where $n_v+e_v(1-t)\leq r_1\leq e_v(t)+n_v.$   

\subsubsection{The case that $k\geq 1$}\label{7.2.3}
Suppose that $k\geq 1$ in \eqref{50}, which implies that 
\begin{equation}\label{51'}
\begin{cases}
2k+r_2+e_v(1-t)=0\\
k+r_1+r_2\geq \max\{n_v, m_v\}=n_v\\
k+r_2\geq 0\\
\varpi_v^k\big[(\varpi_v^{r_2}+\alpha \varpi_v^{r_1+r_2-n_v})\beta \varpi_v^{-n_v}+\varpi_v^{-r_1}t+\alpha \varpi_v^{-n_v}\big]\in\mathcal{O}_v.
\end{cases}
\end{equation}

From the last constraint we conclude that $k-r_1+e_v(t)\geq -n_v.$ Hence
\begin{align*}
\begin{cases}
e_v(1-t)=e_v(t)\leq -k\leq -1\\
e_v(t)\leq r_2\leq -e_v(t)-2\\
n_v+e_v(t)+1\leq r_1\leq n_v\\
k\geq n_v-r_1-r_2\geq 1\\
\big[(\varpi_v^{k+r_2}+\alpha )\beta \varpi_v^{-n_v}+\varpi_v^{k-r_1}t+\alpha \varpi_v^{k-n_v}\big]\in\mathcal{O}_v.
\end{cases}
\end{align*}
 
Therefore, the contribution to $\mathcal{E}_v(t)$ from this case is
\begin{equation}\label{7.8}
\frac{\textbf{1}_{e_v(t)\leq -1}}{|\tau(\chi_v)|^2\Vol(K_v[m_v])}\sum_{r_1}\sum_{e_v(t)\leq r_2\leq -e_v(t)-2}\sum_{1\leq k\leq -e_v(t)}\mathcal{J}_2(r_1,r_2,k,t),
\end{equation}
where $n_v+e_v(t)+1\leq r_1\leq n_v,$ and $\mathcal{J}_2(r_1,r_2,k,t)$ is defined by 
\begin{align*}
\sum_{\substack{\alpha,\beta\in (\mathcal{O}_v/\varpi_v^{n_v}\mathcal{O}_v)^{\times}\\ (\varpi_v^{k+r_2}+\alpha)(\beta +\varpi_v^{k})\equiv \varpi_v^{2k+r_2}-t\varpi_v^{n_v+k-r_1}\pmod{\varpi_v^{n_v}}}}\chi(\alpha)\overline{\chi}(\beta) 
\overline{\omega}_v(1+\beta\varpi_v^{r_1+r_2-n_v}).
\end{align*}

Note that $k\geq 1,$ $\beta +\varpi_v^{k}\in(\mathcal{O}_v/\varpi_v^{n_v}\mathcal{O}_v)^{\times}.$ So $\alpha$ is uniquely determined by $\beta.$ Therefore, by the trivial bound, $|\mathcal{J}_2(r_1,r_2,k,t)|\leq q_v^{n_v}.$ Along with \eqref{7.8}, the contribution to $\mathcal{E}_v(t)$ from this case is 
\begin{align*}
\sum_{n_v+e_v(t)+1\leq r_1\leq n_v}\sum_{e_v(t)\leq r_2\leq -e_v(t)-2}q_v^{m_v}\textbf{1}_{e_v(t)\leq -1}\ll (1-e_v(t))^2q_v^{m_v}\textbf{1}_{e_v(t)\leq -1}.
\end{align*}

\begin{remark}
A more refined bound can be derived in the case where $k\geq 0$ by estimating the character sums nontrivially. However, it becomes apparent that the contribution from the $k\geq 0$ case is overshadowed by the contribution from $k\leq -1$. Therefore, there is no necessity to further reduce the error term. 
\end{remark}

\subsection{Local Estimates at Ramified Places $\Sigma_{\Ram}^+$} 
Consider $v\in \Sigma_{\Ram}^+$, which means $v\mid\mathfrak{Q}$ and $m_v\geq n_v$, where $m_v=e_v(\mathfrak{M})$ and $n_v=r_{\chi_v}$ (cf. \textsection\ref{2.1.5}). 
\begin{prop}\label{4}
Let $v\in \Sigma_{\Ram}^+.$ Then 
\begin{align*}
\mathcal{E}_v(t)\ll 
\begin{cases}
(1-e_v(t))^2q_v^{m_v} \ \ &\text{if $e_v(t)\leq -1,$ $m_v=n_v,$}\\
(e_v(t)-e_v(1-t)+1+m_v-n_v)q_v^{m_v}\ \ &\text{if $e_v(t)\geq m_v-n_v,$}\\
0\ \ &\text{otherwise,}
\end{cases}
\end{align*}
where the implied constant depends at most on $F_v$. 
\end{prop}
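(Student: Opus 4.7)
The approach is to mirror the proof of Proposition \ref{neg} in \textsection\ref{sec6.2}, tracking how the hypothesis $m_v \geq n_v$ (in place of $m_v < n_v$) modifies the support conditions. I would first expand $\mathcal{E}_v(t)$ via \eqref{51..} and the averaged form of $f_v$ from \textsection\ref{2.2.2}; writing $x_v = \varpi_v^{r_1}\gamma_1$ and $y_v = \varpi_v^{r_2}\gamma_2$ with $\gamma_j \in \mathcal{O}_v^\times$, the support of $f_v$ again reduces the problem to the requirement that $\varpi_v^k Y_{\alpha,\beta,r_1,r_2,t} \in K_v[m_v]$ for some $k \in \mathbb{Z}$, where $Y_{\alpha,\beta,r_1,r_2,t}$ is the matrix introduced in the proof of Proposition \ref{neg}. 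Since $v \in \Sigma_{\Ram}^+$ now gives $\max\{n_v, m_v\} = m_v$, the entry-$(2,1)$ condition reads $k + r_1 + r_2 \geq m_v$, strengthening the corresponding constraint in \eqref{50}.

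I would then split into the three cases $k \leq -1$, $k = 0$, and $k \geq 1$, in parallel with \textsection\ref{7.2.1}--\textsection\ref{7.2.3}. For $k \leq -1$, the reduction in \textsection\ref{7.2.1} forces $r_2 = 0$ and $r_1 = n_v$, hence $k + n_v \geq m_v$, i.e., $k \geq m_v - n_v \geq 0$; this contradicts $k \leq -1$, so this case contributes nothing. For $k = 0$, adapting \textsection\ref{7.2.2}, the strengthened constraint $r_1 + r_2 \geq m_v$ together with the cancellation condition in the $(1,2)$-entry delimits an interval for $r_1$; after running through the two subcases $r_2 \geq 1$ and $r_2 = 0$ and applying the trivial bound $|\mathcal{J}_1| \leq q_v^{n_v}$, the total contribution is $\ll (e_v(t) - e_v(1-t) + 1 + m_v - n_v) q_v^{m_v}$ and is supported on $e_v(t) \geq m_v - n_v$. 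For $k \geq 1$, the analogous tightening in \textsection\ref{7.2.3} forces $e_v(1-t) = e_v(t) \leq -1$ and severely restricts the triple $(r_1, r_2, k)$; one verifies that the resulting range is empty unless $m_v = n_v$, in which case $\Sigma_{\Ram}^+$ agrees with $\Sigma_{\Ram}^-$ and the bound $(1 - e_v(t))^2 q_v^{m_v} \mathbf{1}_{e_v(t)\leq -1}$ is inherited verbatim.

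The main obstacle is the bookkeeping in the $k = 0$ case: determining the exact interval for $r_1$ requires carefully tracking when cancellations among the four summands of the $(1,2)$-entry can happen, and doing so consistently with the strengthened constraint $r_1 + r_2 \geq m_v$. In particular, arriving at the $+(m_v - n_v)$ term in the claimed bound (rather than a $-(m_v - n_v)$ term, which one might naively expect from shrinking the admissible interval for $r_1$ from $n_v + e_v(1-t) \leq r_1 \leq n_v + e_v(t)$ to $m_v + e_v(1-t) \leq r_1 \leq n_v + e_v(t)$) will require either combining contributions from several distinct cancellation regimes for the $(1,2)$-entry, or folding in residual pieces from the $k \geq 1$ case when $m_v > n_v$. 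The character sum inputs themselves are trivial, entirely analogous to those used in Proposition \ref{neg}.
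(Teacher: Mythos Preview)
Your proposal is correct and follows essentially the same route as the paper: the paper also reduces to the constraints \eqref{50}/\eqref{7.9}, observes that $k\geq 0$ (equivalently, rules out $k\leq -1$ via $k+r_1+r_2\geq m_v$ together with the unit condition on the $(2,2)$-entry forcing $k+r_1+r_2\leq n_v$), and then handles $k=0$ and $k\geq 1$ by the same trivial character-sum bounds you describe, with the $k\geq 1$ case collapsing to $m_v=n_v$.

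The ``main obstacle'' you flag is not an obstacle. The $k=0$ analysis gives exactly the range $m_v+e_v(1-t)\leq r_1\leq e_v(t)+n_v$, whose length is $e_v(t)-e_v(1-t)+1-(m_v-n_v)$; since $m_v\geq n_v$, this is $\leq e_v(t)-e_v(1-t)+1+(m_v-n_v)$, and the proposition only claims the latter (weaker) bound. No additional cancellation regimes or residual $k\geq 1$ pieces are needed.
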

\begin{proof}
Consider the notation used in the proof of Proposition \ref{neg} in \textsection\ref{sec6.2} (or in \cite[\textsection 6.2]{Yan23c}). Since $m_v\geq n_v\geq 1,$ the  constraints \eqref{50} can be simplified as follows:
\begin{equation}\label{7.9}
\begin{cases}
2k+r_2+e_v(1-t)=0\\
k+r_1+r_2\geq m_v\\
\varpi_v^k(\varpi_v^{r_2}+\alpha \varpi_v^{r_1+r_2-n_v})\in\mathcal{O}_v^{\times}\\
\varpi_v^k(1+\beta \varpi_v^{r_1+r_2-n_v})\in\mathcal{O}_v^{\times}\\
\varpi_v^k\big[(\varpi_v^{r_2}+\alpha \varpi_v^{r_1+r_2-n_v})\beta \varpi_v^{-n_v}+\varpi_v^{-r_1}t+\alpha \varpi_v^{-n_v}\big]\in\mathcal{O}_v.
\end{cases}
\end{equation}
By considering the second and fourth constraints in \eqref{7.9}, we deduce that $k\geq 0$. We can now proceed to examine the following two cases.

\subsubsection{The case that $k=0$}
Suppose that $k=0$ in \eqref{7.9}, which implies that 
\begin{equation}\label{52..}
\begin{cases}
r_2+e_v(1-t)=0\\
r_1+r_2\geq m_v\\
\min\{r_2, r_1+r_2-n_v\}=0\\
(\varpi_v^{r_2}+\alpha \varpi_v^{r_1+r_2-n_v})\beta \varpi_v^{-n_v}+\varpi_v^{-r_1}t+\alpha \varpi_v^{-n_v}\in\mathcal{O}_v.
\end{cases}
\end{equation}

Then $r_1+r_2\geq \max\{n_v,m_v\}=m_v,$ $e_v(t)-r_1\geq -n_v,$ and $e_v(1-t)=-r_2\leq 0.$ So $0\leq r_2=-e_v(t-1),$ and $m_v+e_v(1-t)\leq r_1\leq e_v(t)+n_v.$ Therefore, the contribution to $\mathcal{E}_v(t)$ from this case is
\begin{equation}\label{7.6}
\frac{\textbf{1}_{e_v(t)-e_v(1-t)\geq m_v-n_v}}{|\tau(\chi_v)|^2\Vol(K_v[m_v])}\sum_{m_v+e_v(1-t)\leq r_1\leq e_v(t)+n_v}\mathcal{J}_1(r_1,t),
\end{equation}
where $\mathcal{J}_1(r_1,t)$ is defined by 
\begin{align*}
\sum_{\substack{\alpha,\beta\in (\mathcal{O}_v/\varpi_v^{n_v}\mathcal{O}_v)^{\times}\\ (\varpi_v^{r_2}+\alpha \varpi_v^{r_1-e_v(t-1)-n_v})\beta \varpi_v^{-n_v}+\varpi_v^{-r_1}t+\alpha \varpi_v^{-n_v}\in\mathcal{O}_v}}\chi(\alpha)\overline{\chi}(\beta)\overline{\omega}_v(1+\beta\varpi_v^{r_1-e_v(t-1)-n_v}).
\end{align*}	

By the trivial bound (as in \textsection\ref{7.2.2}) the sum in \eqref{7.6} is 
\begin{align*}
\ll (e_v(t)-e_v(1-t)+1+m_v-n_v)q_v^{m_v}\textbf{1}_{e_v(t)-e_v(1-t)\geq m_v-n_v}.
\end{align*}

\subsubsection{The case that $k\geq 1$} Suppose that $k\geq 1$ in \eqref{50}, which implies that 
\begin{equation}\label{51'}
\begin{cases}
2k+r_2+e_v(1-t)=0\\
k+r_1+r_2\geq n_v\\
k+r_1+r_2-m_v=0\\
k+r_2\geq 0\\
\varpi_v^k\big[(\varpi_v^{r_2}+\alpha \varpi_v^{r_1+r_2-n_v})\beta \varpi_v^{-n_v}+\varpi_v^{-r_1}t+\alpha \varpi_v^{-n_v}\big]\in\mathcal{O}_v.
\end{cases}
\end{equation}

Since $m_v\geq n_v,$ then by the second and the third constraints in \eqref{51'} we have $m_v=n_v.$ From the last constraint we conclude that $k-r_1+e_v(t)\geq -n_v.$ Hence
\begin{align*}
\begin{cases}
e_v(1-t)=e_v(t)\leq -k\leq -1,\ \ m_v=n_v\\
e_v(t)\leq r_2\leq -e_v(t)-2\\
n_v+e_v(t)+1\leq r_1\leq n_v\\
k= n_v-r_1-r_2\geq 1\\
\big[(\varpi_v^{k+r_2}+\alpha )\beta \varpi_v^{-n_v}+\varpi_v^{k-r_1}t+\alpha \varpi_v^{k-n_v}\big]\in\mathcal{O}_v.
\end{cases}
\end{align*}
 
As in \textsection\ref{7.2.3},  the contribution to $\mathcal{E}_v(t)$ from this case is
\begin{align*}
\frac{\textbf{1}_{e_v(t)\leq -1}\cdot \textbf{1}_{m_v=n_v}}{|\tau(\chi_v)|^2\Vol(K_v[m_v])}\sum_{n_v+e_v(t)+1\leq r_1\leq n_v}\sum_{e_v(t)\leq r_2\leq -e_v(t)-2}\mathcal{J}_2(r_1,r_2,k,t),
\end{align*}
where $\mathcal{J}_2(r_1,r_2,k,t)$ is defined by 
\begin{align*}
\sum_{\substack{\alpha,\beta\in (\mathcal{O}_v/\varpi_v^{n_v}\mathcal{O}_v)^{\times}\\ (\varpi_v^{k+r_2}+\alpha)(\beta +\varpi_v^{k})\equiv \varpi_v^{2k+r_2}-t\varpi_v^{n_v+k-r_1}\pmod{\varpi_v^{n_v}}}}\chi(\alpha)\overline{\chi}(\beta) 
\overline{\omega}_v(1+\beta\varpi_v^{r_1+r_2-n_v}).
\end{align*}

By trivial bound the contribution to $\mathcal{E}_v(t)$ in this case is 
\begin{align*}
\ll (1-e_v(t))^2q_v^{m_v}\textbf{1}_{e_v(t)\leq -1}\textbf{1}_{m_v=n_v}.
\end{align*}

Therefore, Proposition \ref{4} follows. 
\end{proof}

\subsection{Local Estimates: archimedean}\label{sec6.4}
Let $v\mid \infty.$ Define by 
\begin{equation}\label{6.21}
\mathcal{E}_v^{\dagger}:=\int_{F_v^{\times}}\int_{F_v}\max_{t\in F-\{0,1\}}\Big|f_v\left(\begin{pmatrix}
	y_v&x_v^{-1}t\\
	x_vy_v&1
\end{pmatrix}\right)\Big|dx_vd^{\times}y_v.
\end{equation}

By \cite[Lemma 6.8]{Yan23c} we have the following estimate. 
\begin{lemma}\label{lem6.8}
Let notation be as before. Let $v\mid \infty.$ Then 
\begin{equation}\label{6.22}
\mathcal{E}_v^{\dagger}\ll T_v^{\varepsilon},
\end{equation}
where the implied constant depends on $\varepsilon,$ $F,$ $c_v,$ and $C_v$ defined in \textsection\ref{2.1.2}.  
\end{lemma}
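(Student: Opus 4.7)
The plan is to establish an effective pointwise bound on $|f_v(g)|$ using the microlocal construction of $\tilde{f}_v$, and then integrate over $(x_v, y_v)$.

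I first propagate the bound \eqref{2.7} through the central averaging \eqref{6.} and the self-convolution \eqref{3.} that define $f_v$. Both operations preserve the support structure modulo $Z(F_v)$ up to comparable constants, yielding
$$|f_v(g)| \ll T_v^{1+\varepsilon} \cdot \mathbf{1}_{\mathcal{S}_v}(g) \cdot \min\Big\{1, \frac{T_v^{-1/2+\varepsilon}}{d_v(g)}\Big\},$$
where $\mathcal{S}_v \subset \overline{G}(F_v)$ is the shrinking neighborhood of $I_2$ cut out by the conditions $|g - I_2| \ll T_v^{-\varepsilon}$ (modulo center) and $|\Ad^*(g)\tau - \tau| \ll T_v^{-1/2+\varepsilon}$.

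Next, I translate these constraints to conditions on $(x_v, y_v, t)$ via the matrix $g = \begin{pmatrix} y_v & x_v^{-1}t \\ x_v y_v & 1 \end{pmatrix}$. Since the $(2,2)$-entry already equals $1$, no central scaling is needed, and $|g - I_2| \ll T_v^{-\varepsilon}$ gives $|y_v - 1|_v, |x_v|_v, |x_v^{-1}t|_v \ll T_v^{-\varepsilon}$. A direct computation of the adjoint action on the root vectors $E_{12}, E_{21}$ yields $|\Ad^*(g)\tau - \tau| \asymp |\gamma_{1,v} - \gamma_{2,v}| \cdot (|x_v y_v|_v + |x_v^{-1}t|_v)$, and since the uniform parameter growth hypothesis (cf.\ \textsection\ref{2.1.2}) ensures $|\gamma_{1,v} - \gamma_{2,v}| \asymp T_v$, the transversality sharpens the off-diagonal constraints to
$$|x_v|_v,\ |x_v^{-1}t|_v \ll T_v^{-3/2+\varepsilon}.$$

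Finally, I integrate. For any fixed $(x_v, y_v)$ in the effective support, I can choose $t \in F - \{0,1\}$ with $|t|_v$ arbitrarily small (since $F$ is dense in $F_v$), so the condition on $|x_v^{-1}t|_v$ can always be met; hence $\max_t |f_v(g)| \ll T_v^{1+\varepsilon}$ on the $(x_v, y_v)$-support. The measure of that support under $dx_v \, d^\times y_v$ is $\ll T_v^{-\varepsilon} \cdot T_v^{-3/2+\varepsilon}$ (real case; even smaller when $F_v$ is complex), and therefore
$$\mathcal{E}_v^\dagger \ll T_v^{1+\varepsilon} \cdot T_v^{-\varepsilon} \cdot T_v^{-3/2+\varepsilon} \ll T_v^\varepsilon.$$
The main technical obstacle is faithfully transferring the $d_v$-weighted bound in \eqref{2.7} through the convolution in \eqref{3.}, which requires invoking Proposition \ref{prop3.1} to control the overlap of two shrinking neighborhoods of the stabilizer of $\tau$; this is a delicate microlocal computation carried out essentially in \cite{Nel21,NV21}.
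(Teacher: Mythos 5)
Your proposed proof relies on the claim that the adjoint condition sharpens to $|x_v y_v|_v, |x_v^{-1}t|_v \ll T_v^{-3/2+\varepsilon}$, so that the $(x_v,y_v)$ support has measure $\ll T_v^{-3/2+\varepsilon}\cdot T_v^{-\varepsilon}$ and the plain sup-norm bound $T_v^{1+\varepsilon}$ already closes the estimate. That exponent is off by a full factor of $T_v$: the transverse (to the stabilizer of $\tau$) thickness of $\supp\tilde f_v$ is $T_v^{-1/2+\varepsilon}$, not $T_v^{-3/2+\varepsilon}$. This is exactly what the paper uses in the proof of Lemma \ref{lem5.2}, where the range of $y_v$ and the essential range of $b_v$ are each recorded as $T_v^{-1/2+\varepsilon}$, and it is forced by consistency: with $\|\tilde f_v\|_\infty\ll T_v^{1+\varepsilon}$ (cf.\ \eqref{250}), two transverse directions of thickness $T_v^{-1/2+\varepsilon}$ and two lenient directions of thickness $T_v^{-\varepsilon}$ give $\|\tilde f_v\|_{L^1}\ll T_v^{o(1)}$, which is what makes $\pi_v(\tilde f_v)$ an approximate rank-one projector; transverse thickness $T_v^{-3/2+\varepsilon}$ would instead give $\|\tilde f_v\|_{L^1}\ll T_v^{-2}$, which is not the construction. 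With the correct thickness, the ``sup-norm times support measure'' argument gives only
$T_v^{1+\varepsilon}\cdot T_v^{-1/2+\varepsilon}\cdot T_v^{-\varepsilon}\ll T_v^{1/2+O(\varepsilon)}$,
which overshoots the claimed $T_v^\varepsilon$ by $T_v^{1/2}$.

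The missing $T_v^{-1/2}$ is precisely what the transversality bound \eqref{2.7} (via Proposition \ref{prop3.1} with $\delta_v=T_v^{-1/2+\varepsilon}$) supplies: along the one lenient (stabilizer) direction one has $d_v(g)\asymp |x_v|$ (after choosing $t$ to sit in the stabilizer), and the factor $\min\{1,T_v^{-1/2+\varepsilon}/d_v(g)\}$ turns the $x_v$-integral $\int_{|x_v|\ll T_v^{-\varepsilon}} dx_v$ into $\int \min\{1,T_v^{-1/2+\varepsilon}/|x_v|\}\,dx_v\ll T_v^{-1/2+\varepsilon}$, after which the estimate closes. Your write-up acknowledges that propagating this $d_v$-weighted bound through the convolution \eqref{3.} is a genuine technical point, but then dispenses with it because the (over-optimistic) support estimate appears to make it unnecessary; in fact it is indispensable, and punting on it leaves a real gap. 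As a secondary point, since $\tau'|_A=0$ (so $\tau$ is anti-diagonal rather than diagonal), the sharp adjoint constraint lands on the diagonal entry $y_v-1$ rather than on the off-diagonal entries $x_v y_v$, $x_v^{-1}t$; this does not change the eventual measure count, but the commutator computation you sketch is not the relevant one.
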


\subsection{Bounding Regular Orbital Integrals: Proof of Theorem \ref{thmD}} 
\subsubsection{The support of the rationals $t\in F-\{0,1\}$}
\begin{lemma}\label{lem10}
Let notation be as before. Suppose $t\in F-\{0,1\}.$ Let $f$ be the test function defined in \textsection\ref{3.2.5}. Let 
\begin{equation}\label{13}
\mathfrak{X}(\mathfrak{Q},f):=\bigg\{\xi\in F^{\times}\cap \prod_{v\in\Sigma_{\Ram}^-}\mathfrak{p}_v^{-2(n_v-m_v)}\prod_{v\nmid\mathfrak{Q}}\mathfrak{p}_v^{m_v}\mathcal{O}_F:\ |\xi|_{v}\ll 1,\ v\mid \infty\bigg\},
\end{equation}
where the implied constant depends only on $\supp f_{\infty}.$ Then the integral  $\prod_{v\in\Sigma_F} \mathcal{E}_v(t)$ converges absolutely and it vanishes unless $\frac{t}{t-1}\in \mathfrak{X}(\mathfrak{Q},f).$
\end{lemma}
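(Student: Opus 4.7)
The plan is to combine the local support conditions from Lemma \ref{lem7.2}, Propositions \ref{neg} and \ref{4}, and the support description of $f_v$ at $v\mid\infty$ from \textsection\ref{3.2.1}, translating each into a local condition on $\xi:=t/(t-1)$. Absolute convergence of $\prod_v\mathcal{E}_v(t)$ is then automatic: for a fixed $t\in F-\{0,1\}$, all but finitely many places $v$ satisfy $e_v(t)=e_v(1-t)=0$, $m_v=0$, and $v\nmid \mathfrak{D}_F$, and at those places the last assertion of Lemma \ref{lem7.2} gives $\mathcal{E}_v(t)=1$; at the remaining finite set of places each $\mathcal{E}_v(t)$ is a finite integral of a compactly supported smooth or locally constant function.

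At the nonarchimedean places I would simply read off the three statements. At $v\nmid \mathfrak{Q}$, Lemma \ref{lem7.2} gives $\mathcal{E}_v(t)=0$ unless $e_v(t-1)\leq 0$ and $e_v(t)-e_v(1-t)\geq m_v$, which is exactly $e_v(\xi)\geq m_v$. At $v\in\Sigma_{\Ram}^-$, each non-vanishing case of Proposition \ref{neg} yields $e_v(\xi)\geq -2(n_v-m_v)$: the first case has $e_v(1-t)=-2k$ with $m_v-n_v\leq k\leq -1$ and $|1-t|_v>1$, forcing $e_v(t)=0$ and thus $e_v(\xi)=2k\geq 2(m_v-n_v)$; the second case gives $e_v(\xi)\geq 0$ directly; the third has $e_v(t)\leq -1$, forcing $e_v(t-1)=e_v(t)$ and hence $e_v(\xi)=0$. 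At $v\in\Sigma_{\Ram}^+$, Proposition \ref{4} analogously yields $e_v(\xi)\geq 0$ in each non-vanishing case.

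For the archimedean constraint, I would use that the support of $f_v$, $v\mid\infty$, lies in $Z(F_v)\cdot U_v$ for a small neighborhood $U_v$ of $I_2$, a consequence of \eqref{245} together with the convolution formula \eqref{3.} defining $f_v$. Writing the matrix $\begin{pmatrix} y_v & x_v^{-1}t \\ x_v y_v & 1 \end{pmatrix}=z_v h_v$ with $h_v\in U_v$ and reading off entries gives $z_v\asymp 1$, $y_v\asymp 1$, $|x_v y_v|_v\ll 1$, and $|x_v^{-1}t|_v\ll 1$. The determinant identity $y_v(1-t)=z_v^2\det h_v\asymp 1$ then forces $|1-t|_v\asymp 1$, which combined with $|t|_v=|x_v^{-1}t|_v\cdot|x_v|_v\ll 1$ (using $|x_v|_v\leq |x_v y_v|_v/|y_v|_v\ll 1$) yields $|\xi|_v\ll 1$. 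Collecting all local conditions gives $\xi\in\mathfrak{X}(\mathfrak{Q},f)$. The main obstacle is this archimedean step, where one must chase the support through the convolution $f_v(\cdot,\chi_v)\ast f_v(\cdot,\chi_v)^*$ and invoke the determinant identity to exclude $t$ close to $1$ archimedeanly; the nonarchimedean bookkeeping, while tedious, is essentially a direct translation of the propositions already proved.
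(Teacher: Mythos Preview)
Your approach is essentially the same as the paper's: invoke Lemma \ref{lem7.2}, Propositions \ref{neg} and \ref{4} at the finite places to extract the valuation constraints on $\xi=t/(t-1)$, and use the archimedean support description of $f_v$ to get $|\xi|_v\ll 1$; the paper is terser on the archimedean step (deferring it essentially to \eqref{6.32}), while you spell out the determinant argument, which is fine. One slip: in the first case for $v\in\Sigma_{\Ram}^-$ you write ``$|1-t|_v>1$, forcing $e_v(t)=0$'', but $e_v(1-t)=-2k\geq 2$ means $|1-t|_v<1$, and it is \emph{this} inequality that forces $e_v(t)=e_v\big(1-(1-t)\big)=\min\{0,e_v(1-t)\}=0$; with the sign corrected your computation $e_v(\xi)=2k\geq 2(m_v-n_v)$ goes through unchanged.
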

\begin{proof}
Recall the definition \eqref{51..}: for $v\in\Sigma_F,$
\begin{align*}
\mathcal{E}_v(t):=\int_{F_v^{\times}}\int_{F_v^{\times}}f_v\left(\begin{pmatrix}
	y_v&x_v^{-1}t\\
	x_vy_v&1
\end{pmatrix}\right)\overline{\chi}_v(y_v)d^{\times}y_vd^{\times}x_v.
\end{align*}

By Lemma \ref{lem7.2} the integral $\mathcal{E}_v(t)=1$ for all but finitely many $v$'s. It then follows from Propositions \ref{neg} and \ref{4}, and Lemma \ref{lem6.8} that $\prod_{v\in\Sigma_F} \mathcal{E}_v(t)$ converges absolutely and it is vanishing unless
\begin{equation}\label{6.27}
\begin{cases}
e_v(t)-e_v(t-1)\geq m_v,\ & \text{if $v\nmid\mathfrak{Q},$}\\
e_v(t)-e_v(t-1)\geq 0,\ & \text{if $v\in\Sigma_{\Ram}^+.$}\\
e_v(t)-e_v(t-1)\geq -2(n_v-m_v),\ & \text{if $v\in\Sigma_{\Ram}^-.$}
\end{cases}
\end{equation}

Since $t/(t-1)\in F-\{0,1\},$ then \eqref{13} follows from \eqref{6.27}.
\end{proof}

\subsubsection{Estimate of nonarchimedean integrals}\label{6.5.2}
Fix an ideal $\mathfrak{R}\subset \mathcal{O}_{F}$ with the property that $e_v(\mathfrak{R})=m_v$ for $v\nmid\mathfrak{Q},$ and   $e_v(\mathfrak{R})=0$ for all $v<\infty$ and $v\mid\mathfrak{Q}.$ 

Fix an ideal $\mathfrak{N}\subset \mathcal{O}_{F}$ with the property that $e_v(\mathfrak{N})=n_v-m_v$ for $v\in\Sigma_{\Ram}^-,$ and   $e_v(\mathfrak{R})=0$ for all $v<\infty$ and $v\not\in\Sigma_{\Ram}^-.$

For $t\in F-\{0,1\}$ with $t/(t-1)\in\mathfrak{X}(\mathfrak{Q},f)$ (cf. \eqref{13}), we may write 
\begin{equation}\label{6.28}
t/(t-1)=u,\ \ u\in \mathfrak{R}\mathfrak{N}^{-2}\mathcal{O}_F.
\end{equation}
Then $1/(t-1)=u-1.$

 \begin{lemma}\label{lem6.10}
 Let notation be as above. Let $\mathcal{E}_v(t)$ be defined by \eqref{51..}. Set $\mathcal{E}_{\fin}(t):=\prod_{v<\infty}|\mathcal{E}_v(t)|.$ Let $t/(t-1)=u\in \mathfrak{R}\mathfrak{N}^{-2}\mathcal{O}_F$ be as in \eqref{6.28}. Then $\mathcal{E}_{\fin}(t)$ is 
\begin{equation}\label{7.21}
\ll (MQN_F(u(u-1)))^{\varepsilon}M\prod_{\substack{v\in\Sigma_{F,\fin}\\ v\nmid\mathfrak{Q}}}\textbf{1}_{e_v(u)\geq m_v}\prod_{v\in\Sigma_{\Ram}^-}\mathcal{J}_v^-(u)\prod_{v\in\Sigma_{\Ram}^+}\mathcal{J}_v^+(u),
\end{equation}
where $M=N_F(\mathfrak{M})$ (cf. \eqref{2.1.5}), and 
\begin{align*}
\mathcal{J}_v^-(u):=& \textbf{1}_{e_v(u)\geq 0}+\sum_{m_v-n_v\leq k\leq -1}q_v^{k}\textbf{1}_{e_v(u-1)=2k},\\
\mathcal{J}_v^+(u):=&\textbf{1}_{\substack{e_v(u-1)\geq 1}}\textbf{1}_{m_v=n_v}+\textbf{1}_{\substack{e_v(u)\geq m_v-n_v}}.
\end{align*}
Here the implied constant in \eqref{7.21} depends on $F$ and $\varepsilon.$
\end{lemma}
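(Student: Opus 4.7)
The plan is to establish the bound place by place by applying the local estimates of Lemma \ref{lem7.2}, Proposition \ref{neg}, and Proposition \ref{4}, and then to translate each local bound from the coordinates $(e_v(t), e_v(1-t))$ to $(e_v(u), e_v(u-1))$ via the identities coming from $u = t/(t-1)$, namely $t = u/(u-1)$ and $t - 1 = 1/(u-1)$, which give
\[
e_v(t) = e_v(u) - e_v(u-1), \qquad e_v(1-t) = -e_v(u-1), \qquad e_v(t) - e_v(1-t) = e_v(u).
\]

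First, at $v \nmid \mathfrak{Q}$, Lemma \ref{lem7.2} combined with the substitution gives
\[
|\mathcal{E}_v(t)| \ll \Vol(K_v[m_v])^{-1}\bigl(1 + e_v(u-1)\bigr)\bigl(1 + e_v(u) + e_v(u-1)\bigr)\,\textbf{1}_{e_v(u) \geq m_v},
\]
after noting $e_v(t-1) \leq 0$ is automatic from $e_v(u) \geq m_v \geq 0$. Taking the product over these $v$ contributes the main factor $\prod_{v \nmid \mathfrak{Q}} \Vol(K_v[m_v])^{-1} \asymp M^{1+o(1)}$, together with the indicator $\prod_v \textbf{1}_{e_v(u) \geq m_v}$ and arithmetic-function-type factors that I will absorb at the end.

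Next, at $v \in \Sigma_{\Ram}^-$, I apply Proposition \ref{neg} and translate each case: the branch $e_v(1-t) = -2k$, $m_v - n_v \leq k \leq -1$ becomes $e_v(u-1) = 2k$ with bound $q_v^{m_v+k}$; the branch $e_v(t) - e_v(1-t) \geq 0$ becomes $e_v(u) \geq 0$ with bound $(1 + e_v(u))q_v^{m_v}$; and the branch $e_v(t) \leq -1$ forces $e_v(u-1) \geq 1$ and hence $u \in \mathcal{O}_v^\times$ (so $e_v(u) = 0$), which is already contained in the indicator $\textbf{1}_{e_v(u) \geq 0}$, with its polynomial factor $(1-e_v(t))^2 = (1 + e_v(u-1))^2$ preserved. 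After pulling $q_v^{m_v}$ out, the indicator structure collapses exactly to $\mathcal{J}_v^-(u)$ and leaves behind bounded polynomial remainders in $e_v(u)$ and $e_v(u-1)$. The same procedure applied to Proposition \ref{4} yields $\mathcal{J}_v^+(u)\cdot q_v^{m_v}$ times analogous polynomial factors at $v \in \Sigma_{\Ram}^+$.

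Finally, multiplying over all finite $v$, the product of the main powers of $q_v$ is $\prod_{v<\infty} q_v^{m_v} = M$. The polynomial factors aggregate to an expression of shape $\prod_{v<\infty}(1 + e_v(u))^{O(1)}(1 + e_v(u-1))^{O(1)}$, which is bounded by $N_F(u(u-1))^\varepsilon$ via the standard divisor bound applied to the fractional ideals generated by $u$ and $u-1$; the residual multiplicative constants (the factors $|\tau(\chi_v)|^{-2} = q_v^{-n_v}$ at $v\mid\mathfrak{Q}$, the ratio $\Vol(K_v[m_v])^{-1}/q_v^{m_v}$, and the finitely many $v \mid \mathfrak{D}_F$) contribute at most $(MQ)^\varepsilon$. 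The main bookkeeping hurdle will be to verify that the overlapping cases of Proposition \ref{neg}, particularly the branch $e_v(t) \leq -1$ sitting inside $e_v(u) \geq 0$, consolidate cleanly into $\mathcal{J}_v^-(u)$ without any double counting, and that the asymmetric shape of $\mathcal{J}_v^+(u)$ (with the extra constraint $m_v = n_v$ in its first term) matches the case dichotomy of Proposition \ref{4} exactly.
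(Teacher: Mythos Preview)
Your proposal is correct and follows essentially the same approach as the paper: apply the local bounds from Lemma \ref{lem7.2}, Proposition \ref{neg}, and Proposition \ref{4} place by place, translate via $e_v(t)-e_v(1-t)=e_v(u)$ and $e_v(1-t)=-e_v(u-1)$, and absorb all the polynomial-in-valuation factors into $(MQN_F(u(u-1)))^{\varepsilon}$. The paper's proof is in fact terser than yours, packaging the polynomial factors into a single quantity $\boldsymbol{\alpha}$ and bounding it directly using $e_v(u)\geq -e_v(\mathfrak{Q})$; your more explicit casework and the flagged ``overlapping cases'' issue are handled in the paper simply by summing the branches of Propositions \ref{neg} and \ref{4} without worrying about disjointness, since an upper bound is all that is needed.
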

\begin{proof}
By Lemma \ref{lem7.2} we have $\mathcal{E}_v(t)=1$ if $e_v(t)=e_v(1-t)=0,$ $m_v=0,$ $n_v=0,$ and $v\nmid \mathfrak{D}_F.$ There are finitely many  remaining places 
$$
v\in\mathcal{V}:=\{\text{$v\in \Sigma_{F,\fin}:$ $v\mid \mathfrak{M}\mathfrak{N}$ or $e_v(t)\neq 0$ or $e_v(t-1)\neq 0$}\}.
$$

Let us denote the expression $\boldsymbol{\alpha}$ as follows:
\begin{align*}
\prod_{v\in \mathcal{V}\cap \Sigma_{\Ram}^+}n_v^2(|e_v(t)-e_v(t-1)|+1)^2\prod_{v\in \mathcal{V}-\Sigma_{\Ram}^+}(1+|e_v(t)|+2|e_v(t-1)|)^2,
\end{align*} 
where the terms in the product dominate coefficients in Lemma \ref{lem7.2}, Propositions \ref{neg} and \ref{4}. Using \eqref{6.28}, we observe that $e_v(u)\geq -e_v(\mathfrak{Q})$. Consequently, we have the estimate: 
\begin{align*}
\boldsymbol{\alpha}\ll (MQ)^{2\varepsilon}\cdot (N_F(u)N_F(u-1))^{\varepsilon},
\end{align*} 
where the implied constants depends on $\varepsilon.$ As a consequence, \eqref{7.21} follows from Lemma \ref{lem7.2}, Propositions \ref{neg} and \ref{4}.
\end{proof}

For $x_{\infty}=\otimes_{v\mid\infty}x_v\in F_{\infty}.$ For $t\in \mathfrak{X}(\mathfrak{Q},f),$ parametrize $t/(t-1)$ via \eqref{6.28}. Let 
\begin{equation}\label{6.33}
\mathcal{C}(x_{\infty}):=\sum_{\substack{t\in F-\{0,1\},\ \frac{t}{t-1}=u\in \mathfrak{X}(\mathfrak{Q},f)\\
|\frac{t}{t-1}|_v\ll |x_v|_v,\ v\mid\infty}}\mathcal{E}_{\fin}(t).
\end{equation}

\begin{lemma}\label{lem6.11}
Let notation be as before. Let $x_{\infty}\in F_{\infty}^{\times}.$ Let $\mathcal{C}(x_{\infty})$ be defined by \eqref{6.33}. Then
 \begin{equation}\label{6.34}
\mathcal{C}(x_{\infty})\ll_{\varepsilon,F}(MQ(1+|x_{\infty}|_{\infty}))^{\varepsilon}\cdot |x_{\infty}|_{\infty}\cdot Q\cdot \textbf{1}_{M\ll Q^2\gcd(M,Q) |x_{\infty}|_{\infty}},
\end{equation}
where the implied constant depends on $\varepsilon$ and $F.$
\end{lemma}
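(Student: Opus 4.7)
The plan is to feed the bound from Lemma~\ref{lem6.10} into \eqref{6.33} and interpret the resulting sum as counting $u=t/(t-1)\in \mathfrak{R}\mathfrak{N}^{-2}\mathcal{O}_F$ subject to the archimedean box constraint $|u|_v\ll |x_v|_v$ at $v\mid\infty$. Since each $|u-1|_v\leq 1+|u|_v\ll 1+|x_v|_v$, one has $N_F(u(u-1))\ll (1+|x_\infty|_\infty)^{O(1)}$, so the factor $(MQN_F(u(u-1)))^{\varepsilon}$ is absorbed into the target $(MQ(1+|x_\infty|_\infty))^{\varepsilon}$. At each $v\in \Sigma_{\Ram}^+$ the trivial inequality $\mathcal{J}_v^+(u)\leq 2\cdot\textbf{1}_{e_v(u)\geq m_v-n_v}$ contributes only $2^{|\Sigma_{\Ram}^+|}\ll (MQ)^{\varepsilon}$ and forces $u\in\mathfrak{p}_v^{m_v-n_v}$. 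Combined with the condition at $v\nmid\mathfrak{Q}$ already encoded in $\mathfrak{R}$, this places $u$ in an ideal of absolute norm $M/\gcd(M,Q)$ at all places outside $\Sigma_{\Ram}^-$.

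Next I expand the product over $\Sigma_{\Ram}^-$ by cases: the second summand of $\mathcal{J}_v^-$ is supported on $e_v(u)=2k$ with $k\in[m_v-n_v,-1]$, so
\begin{align*}
\prod_{v\in\Sigma_{\Ram}^-}\mathcal{J}_v^-(u)=\sum_{S\subseteq\Sigma_{\Ram}^-}\sum_{\substack{(k_v)_{v\in S}\\ m_v-n_v\leq k_v\leq -1}} \prod_{v\in S}q_v^{k_v}\textbf{1}_{e_v(u)=2k_v}\prod_{v\in \Sigma_{\Ram}^-\setminus S}\textbf{1}_{e_v(u)\geq 0}.
\end{align*}
For each fixed $(S,(k_v))$, the admissible $u$ lie in a sublattice $\mathfrak{I}_{S,k}\subset F_\infty$ whose absolute norm is $N_F(\mathfrak{I}_{S,k})=(M/\gcd(M,Q))\prod_{v\in S}q_v^{2k_v}$, and by the product formula non-emptiness of the archimedean box forces $|x_\infty|_\infty\geq N_F(\mathfrak{I}_{S,k})$.

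Standard lattice-point counting then yields $\#\{u\in\mathfrak{I}_{S,k}:\ |u|_v\ll|x_v|_v\}\ll 1+|x_\infty|_\infty/N_F(\mathfrak{I}_{S,k})$, with implied constant depending only on $F$. Multiplying by the weight $M\prod_{v\in S}q_v^{k_v}$ inherited from Lemma~\ref{lem6.10} and the case expansion, and using the non-emptiness condition to absorb the $O(1)$-term, the contribution of each $(S,(k_v))$ is $\ll |x_\infty|_\infty\gcd(M,Q)\prod_{v\in S}q_v^{-k_v}$. Summing $q_v^{-k_v}$ over $k_v\in[m_v-n_v,-1]$ gives $O(q_v^{n_v-m_v})$, and the subsequent sum over $S\subseteq\Sigma_{\Ram}^-$ contributes $\prod_{v\in\Sigma_{\Ram}^-}(1+q_v^{n_v-m_v})\ll N_F(\mathfrak{N})(MQ)^{\varepsilon}$. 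Using the identity $\gcd(M,Q)\cdot N_F(\mathfrak{N})=Q$ we arrive at $\mathcal{C}(x_\infty)\ll|x_\infty|_\infty Q(MQ(1+|x_\infty|_\infty))^{\varepsilon}$. The support indicator $\textbf{1}_{M\ll Q^2\gcd(M,Q)|x_\infty|_\infty}$ follows from the extreme case $S=\Sigma_{\Ram}^-$, $k_v=m_v-n_v$, where $N_F(\mathfrak{I}_{S,k})=M/(\gcd(M,Q)N_F(\mathfrak{N})^2)$ and $N_F(\mathfrak{N})\leq Q$. The main obstacle is balancing the $M$-prefactor against the archimedean volume in the $O(1)$-piece of the lattice count; this is handled uniformly by invoking the product-formula constraint for every $(S,(k_v))$, which ties the smallness of the box directly to the smallness of the sublattice covolume.
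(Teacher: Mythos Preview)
Your proof is correct and follows essentially the same strategy as the paper: insert Lemma~\ref{lem6.10}, expand the product over the ramified places into cases indexed by subsets and exponent tuples, apply lattice-point counting in each case, and sum. One minor difference worth noting: at places $v\in\Sigma_{\Ram}^+$ you use the trivial majorization $\mathcal{J}_v^+(u)\leq 2\cdot\textbf{1}_{e_v(u)\geq m_v-n_v}$ (valid because $e_v(u-1)\geq 1$ with $m_v=n_v$ forces $e_v(u)=0=m_v-n_v$), whereas the paper keeps both summands of $\mathcal{J}_v^+$ via a further decomposition $\mathfrak{a}_1\mathfrak{a}_2=\mathfrak{Q}^+$. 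Your shortcut costs only the harmless factor $2^{|\Sigma_{\Ram}^+|}\ll(MQ)^{\varepsilon}$ and makes the ensuing lattice count cleaner; otherwise the two arguments coincide.
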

\begin{proof}
Note that $u\in \mathfrak{R}\mathfrak{N}^{-2}\mathcal{O}_F-\{0,1\}$ and $N_F(u)\ll |x_{\infty}|_{\infty}.$ Hence, $N_F(\mathfrak{R}\mathfrak{N}^{-2})\ll |x_{\infty}|_{\infty},$ i.e., $M/\gcd(M,Q)\ll Q^2|x_{\infty}|_{\infty}.$  By Lemma \ref{lem6.10}, we have
\begin{equation}\label{7.211}
\mathcal{C}(x_{\infty})\ll (MQ(1+|x_{\infty}|_{\infty}))^{\varepsilon} \mathcal{S}(x_{\infty})\cdot \prod_{v\in \Sigma_{F,\fin}}q_v^{m_v}.
\end{equation}
where the auxiliary sum  $\mathcal{S}(x_{\infty})$ is defined by 
\begin{align*}
\mathcal{S}(x_{\infty}):=\sum_{\substack{u\in \mathfrak{R}\mathfrak{N}^{-2}\mathcal{O}_F\cap  F^{\times}\\
|u|_v\ll |x_v|_v,\ v\mid\infty\\ e_v(u)\geq m_v,\ v<\infty,\ v\nmid \mathfrak{Q}}}\mathcal{S}^+(u)\mathcal{S}^-(u).
\end{align*}
Here the integral ideals $\mathfrak{R}$ and $\mathfrak{N}$ are defined in \textsection\ref{6.5.2}, and 
\begin{align*}
\mathcal{S}^+(u):=&\prod_{v\in\Sigma_{\Ram}^+}\Bigg[1_{e_v(u-1)\geq 1}\cdot \textbf{1}_{m_v=n_v}+\textbf{1}_{\substack{e_v(u)\geq m_v-n_v}}\Bigg],\\
\mathcal{S}^-(u):=&\prod_{v\in\Sigma_{\Ram}^-}\Bigg[\textbf{1}_{e_v(u)\geq 0}+\sum_{m_v-n_v\leq k\leq -1}q_v^{k}\textbf{1}_{e_v(u-1)=2k}\Bigg].
\end{align*}

We proceed to deal with $\mathcal{S}(x_{\infty}).$ Let $\mathfrak{Q}^+:=\prod_{v\in\Sigma_{\Ram}^+}\mathfrak{p}_v$ and $\mathfrak{Q}^-:=\prod_{v\in\Sigma_{\Ram}^-}\mathfrak{p}_v.$ Expanding the products $\mathcal{S}^+(u)\mathcal{S}^-(u)$ we obtain 
\begin{align*}
\mathcal{S}(x_{\infty})=\sum_{\substack{\mathfrak{a}_1\mathfrak{a}_2=\mathfrak{Q}^+\\ m_v=n_v,\ \forall \ v\mid\mathfrak{a}_1}}\sum_{\substack{\mathfrak{b}_3\mathfrak{b}_4=\mathfrak{Q}^-}}\mathcal{S}(\mathfrak{a}_1,\mathfrak{b}_3),
\end{align*}
where 
\begin{align*}
\mathcal{S}(\mathfrak{a}_1,\mathfrak{b}_3):=\sum_{\substack{u\in \mathfrak{R}\mathfrak{N}^{-2}\mathcal{O}_F\cap  F^{\times}\\
|u|_v\ll |x_v|_v,\ v\mid\infty \\
e_v(u)\geq m_v,\ v<\infty,\ v\nmid \mathfrak{Q}\\
e_{v_1}(u-1)\geq 1,\ v_1\mid\mathfrak{a}_1\\
e_{v_2}(u)\geq m_{v_2}-n_{v_2},\ v_2\mid\mathfrak{a}_2\\
e_{v_3}(u)\geq 0,\ v_3\mid\mathfrak{b}_3
}}\prod_{{v_4}\mid\mathfrak{b}_4}\sum_{m_{v_4}-n_{v_4}\leq k\leq -1}q_{v_4}^{k}\textbf{1}_{e_{v_4}(u-1)=2k}.
\end{align*}

Write $\mathfrak{b}_4=\mathfrak{Q}^-\mathfrak{b}_3^{-1}=\prod_{v\in\mathcal{V}_4}\mathfrak{p}_v,$ where $\mathcal{V}_4=\{v_1',\cdots,v_l'\}$ is a subset of $\Sigma_{\Ram}^-.$ Denote by $\underline{k}=(k_1,\cdots,k_l)\in \mathbb{Z}^{l}.$ Then 
\begin{align*}
\prod_{{v_4}\mid\mathfrak{b}_4}\sum_{m_{v_4}-n_{v_4}\leq k\leq -1}q_{v_4}^{k}\textbf{1}_{e_{v_4}(u-1)=2k}=\sum_{\substack{\underline{k}=(k_1,\cdots,k_l)\\
m_{v_i'}-n_{v_i'}\leq k_i\leq -1,\ 1\leq i\leq l}}\prod_{j=1}^lq_{v_j'}^{k_j}\textbf{1}_{\substack{e_{v_j'}(u-1)=2k_j}}.
\end{align*}  

Therefore, 
\begin{align*}
\mathcal{S}(x_{\infty})=\sum_{\substack{\mathfrak{a}_1\mathfrak{a}_2=\mathfrak{Q}^+\\ m_v=n_v,\ \forall \ v\mid\mathfrak{a}_1}}\sum_{\substack{\mathfrak{b}_3\mathfrak{b}_4=\mathfrak{Q}^-}}\sum_{\substack{\underline{k}=(k_1,\cdots,k_l)\\
m_{v_i'}-n_{v_i'}\leq k_i\leq -1,\ 1\leq i\leq l}}\prod_{j=1}^lq_{v_j'}^{k_j}\cdot \mathcal{S}^{\dagger}(x_{\infty}),
\end{align*}
where 
\begin{align*}
\mathcal{S}^{\dagger}(x_{\infty}):=\sum_{\substack{u\in \mathfrak{R}\mathfrak{N}^{-2}\mathcal{O}_F\cap  F^{\times}\\
|u|_v\ll |x_v|_v,\ v\mid\infty \\
e_v(u)\geq m_v,\ v<\infty,\ v\nmid \mathfrak{Q}\\
e_{v_1}(u-1)\geq 1,\ v_1\mid\mathfrak{a}_1\\
e_{v_2}(u)\geq m_{v_2}-n_{v_2},\ v_2\mid\mathfrak{a}_2\\
e_{v_3}(u)\geq 0,\ v_3\mid\mathfrak{b}_3\\
e_{v_j'}(u-1)=2k_j,\ 1\leq j\leq l
}}1.
\end{align*}

By counting rational lattice points in a bounded region, we have 
\begin{align*}
\mathcal{S}^{\dagger}(x_{\infty})\ll \sum_{\substack{u\in \mathfrak{R}\mathfrak{N}^{-2}\mathcal{O}_F\cap  F^{\times}\\
|u|_v\ll |x_v|_v,\ v\mid\infty \\
e_v(u)\geq m_v,\ v<\infty,\ v\nmid \mathfrak{Q}\\
e_{v_2}(u)\geq m_{v_2}-n_{v_2},\ v_2\mid\mathfrak{a}_2\\
e_{v_3}(u)\geq 0,\ v_3\mid\mathfrak{b}_3\\
e_{v_j'}(u)=2k_j
}}1\ll |x_{\infty}|_{\infty}\prod_{\substack{v\in\Sigma_{F,\fin}\\ v\nmid \mathfrak{Q}}}q_v^{-m_v}\prod_{v_2\mid \mathfrak{a}_2}q_{v_2}^{n_{v_2}-m_{v_2}}\prod_{j=1}^lq_{v_j'}^{-2k_j}.
\end{align*}

Therefore, $\mathcal{S}(x_{\infty})$ is majorized by 
\begin{align*}
|x_{\infty}|_{\infty}\prod_{\substack{v\in\Sigma_{F,\fin}\\ v\nmid \mathfrak{Q}}}\frac{1}{q_v^{m_v}}\sum_{\substack{\mathfrak{a}_1\mathfrak{a}_2=\mathfrak{Q}^+\\ m_v=n_v,\ \forall \ v\mid\mathfrak{a}_1}}\prod_{v_2\mid \mathfrak{a}_2}q_{v_2}^{n_{v_2}-m_{v_2}}\sum_{\substack{\mathfrak{b}_3\mathfrak{b}_4=\mathfrak{Q}^-}}\sum_{\substack{\underline{k}=(k_1,\cdots,k_l)\\
m_{v_i'}-n_{v_i'}\leq k_i\leq -1,\ 1\leq i\leq l}}\prod_{j=1}^l\frac{1}{q_{v_j'}^{k_j}}.
\end{align*}

Notice that 
\begin{align*}
\sum_{\substack{\mathfrak{a}_1\mathfrak{a}_2=\mathfrak{Q}^+\\ m_v=n_v,\ \forall \ v\mid\mathfrak{a}_1}}\prod_{v_2\mid \mathfrak{a}_2}q_{v_2}^{n_{v_2}-m_{v_2}}=\sum_{\substack{v\mid\mathfrak{Q}^+}}q_{v_2}^{n_{v_2}-m_{v_2}}\sum_{\substack{\mathfrak{a}_1\mathfrak{a}_2=\mathfrak{Q}^+\\ m_v=n_v,\ \forall \ v\mid\mathfrak{a}_1}}1\ll Q^{\varepsilon}\sum_{\substack{v\mid\mathfrak{Q}^+}}q_{v_2}^{n_{v_2}-m_{v_2}},
\end{align*}
and 
\begin{align*}
\sum_{\substack{\mathfrak{b}_3\mathfrak{b}_4=\mathfrak{Q}^-}}\sum_{\substack{\underline{k}=(k_1,\cdots,k_l)\\
m_{v_i'}-n_{v_i'}\leq k_i\leq -1,\ 1\leq i\leq l}}\prod_{j=1}^lq_{v_j'}^{-k_j}\ll \prod_{v\mid\mathfrak{Q}^-}q_v^{n_v-m_v}\sum_{\substack{\mathfrak{b}_3\mathfrak{b}_4=\mathfrak{Q}^-}}\sum_{\substack{\underline{k}=(k_1,\cdots,k_l)\\
m_{v_i'}-n_{v_i'}\leq k_i\leq -1,\ 1\leq i\leq l}}1,
\end{align*}
which is $\ll Q^{\varepsilon}\prod_{v\mid\mathfrak{Q}^-}q_v^{n_v-m_v}.$ Therefore, 
\begin{equation}\label{7.22}
\mathcal{S}(x_{\infty})\ll |x_{\infty}|_{\infty}Q^{\varepsilon}\prod_{\substack{v\in\Sigma_{F,\fin}\\ v\nmid \mathfrak{Q}}}q_v^{-m_v}\prod_{\substack{v\mid\mathfrak{Q}}}q_{v}^{n_{v}-m_{v}}.
\end{equation}

Then \eqref{6.34} follows from substituting \eqref{7.22} into \eqref{7.211}.
\end{proof}

\subsubsection{Proof of Theorem \ref{thmD}} 
Recall the definition \eqref{17...} in \textsection\ref{sec3.7}: 
\begin{align*}
J^{\Reg,\RNum{2}}_{\Geo,\bi}(f,\textbf{0},\chi)=\sum_{t\in F-\{0,1\}}\int_{\mathbb{A}_F^{\times}}\int_{\mathbb{A}_F^{\times}}f\left(\begin{pmatrix}
	y&x^{-1}t\\
	xy&1
\end{pmatrix}\right)\overline{\chi}(y)d^{\times}yd^{\times}x.
\end{align*}

So the regular orbital integrals $J^{\Reg,\RNum{2}}_{\Geo,\bi}(f,\textbf{0},\chi)$ is 
\begin{align*}
\ll \int_{F_{\infty}^{\times}}\int_{F_{\infty}^{\times}}\sum_{\substack{t\in F-\{0,1\}\\ \frac{t}{t-1}\in \mathfrak{X}(\mathfrak{Q},f)}}\mathcal{E}_{\fin}(t)\Big|f_{\infty}\left(\begin{pmatrix}
	y_{\infty}&x_{\infty}^{-1}t\\
	x_{\infty}y_{\infty}&1
\end{pmatrix}\right)\Big|d^{\times}y_{\infty}d^{\times}x_{\infty},
\end{align*}
where $\mathcal{E}_{\fin}(t):=\prod_{v<\infty}|\mathcal{E}_v(t)|.$

By the support of $f_{\infty}$ (cf. \eqref{245} in \textsection\ref{3.2.1}), we have 
\begin{equation}\label{6.32}
f_{\infty}\left(\begin{pmatrix}
	y_{\infty}&x_{\infty}^{-1}t\\
	x_{\infty}y_{\infty}&1
\end{pmatrix}\right)=0
\end{equation}
unless $y_{\infty}\asymp 1,$ $|x_{v}|_v\ll 1,$ and $\big|\frac{t}{t-1}\big|_{v}\ll |x_v|_{v},$ for all $v\mid\infty.$ Write 
$$t/(t-1)=u\mathfrak{N}^{-2}\mathfrak{R}$$ with $u\in\mathcal{O}_F$ as in \eqref{6.28}. Then 
 $J^{\Reg,\RNum{2}}_{\Geo,\bi}(f,\textbf{0},\chi)$ is 
\begin{align*}
\ll \int_{F_{\infty}^{\times}}\int_{1+o(1)}\textbf{1}_{\substack{|x_v|_v\ll 1\\ v\mid\infty}}\cdot \mathcal{C}(x_{\infty})\cdot \max_{t\in \mathfrak{X}(\mathfrak{Q},f)}\Big|f_{\infty}\left(\begin{pmatrix}
	y_{\infty}&x_{\infty}^{-1}t\\
	x_{\infty}y_{\infty}&1
\end{pmatrix}\right)\Big|d^{\times}y_{\infty}d^{\times}x_{\infty},
\end{align*}
where $\mathcal{C}(x_{\infty})$ is defined by \eqref{6.33}. Note that $|x_v|_v\ll 1$ for $v\mid\infty,$ yielding that $|x_{\infty}|_{\infty}\ll 1.$ Hence, we may replace $\textbf{1}_{M\ll Q^2\gcd(M,Q) |x_{\infty}|_{\infty}}$ with $\textbf{1}_{M\ll Q^2\gcd(M,Q)}$ in Lemma \ref{lem6.11}. As a consequence, we have 
\begin{align*}
J^{\Reg,\RNum{2}}_{\Geo,\bi}(f,\textbf{s}_0,\chi)\ll_{\varepsilon}(MQ)^{\varepsilon}\cdot Q\cdot \textbf{1}_{M\ll Q^2\gcd(M,Q)}\cdot \prod_{v\mid\infty}\mathcal{E}_v^{\dagger},
\end{align*}
where $\mathcal{E}_v^{\dagger}$ is defined by \eqref{6.21}. By Lemma \ref{lem6.8}, the above bound becomes
\begin{align*}
J^{\Reg,\RNum{2}}_{\Geo,\bi}(f,\textbf{s}_0,\chi)\ll T^{\varepsilon}M^{\varepsilon}Q^{1+\varepsilon}\cdot \textbf{1}_{M\ll Q^2\gcd(M,Q)},
\end{align*}	
where the implied constant depends on $\varepsilon,$ $F,$ $c_v,$ and $C_v,$ $v\mid\infty$.

\section{Proof of Main Results}\label{sec7} 
Recall the intrinsic data in \textsection\ref{sec2.1}. Let $F$ be a number field. Let $\chi=\otimes_v\chi_v$ be a primitive unitary Hecke character of $F^{\times}\backslash\mathbb{A}_F^{\times}$. 
\subsection{The Spectral Side}\label{sec7.1}
Recall the lower bound of $J_{\Spec}^{\Reg,\heartsuit}(f,\textbf{0},\chi)$ in \textsection\ref{sec3}.
 \thmf*

\subsection{The Geometric Side}\label{sec7.2}
Recall the geometric side \eqref{geom} in \textsection\ref{rref}: 
\begin{align*}
J_{\Geo}^{\Reg,\heartsuit}(f,\textbf{0},\chi)=J^{\Reg}_{\Geo,\sm}(f,\chi)+J^{\Reg}_{\Geo,\du}(f,\chi)+J^{\Reg,\RNum{2}}_{\Geo,\bi}(f,\textbf{0},\chi).
\end{align*}
 
\begin{prop}\label{prop7.2}
Let notation be as before. Then 
\begin{equation}\label{8.1}
J_{\Geo}^{\Reg,\heartsuit}(f,\textbf{0},\chi)\ll T^{\frac{1}{2}+\varepsilon}M^{1+\varepsilon}+T^{\varepsilon}M^{\varepsilon}Q^{1+\varepsilon}\cdot \textbf{1}_{M\ll Q^2\gcd(M,Q)},
\end{equation}
where the implied constant depends on $\varepsilon,$ $F,$ $c_v,$ and $C_v,$ $v\mid\infty$ (cf. \textsection\ref{2.1.2}).
\end{prop}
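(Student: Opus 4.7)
The plan is straightforward: Proposition \ref{prop7.2} follows by combining the three bounds already proved for the individual pieces of the geometric decomposition
\begin{align*}
J_{\Geo}^{\Reg,\heartsuit}(f,\textbf{0},\chi)=J^{\Reg}_{\Geo,\sm}(f,\chi)+J^{\Reg}_{\Geo,\du}(f,\chi)+J^{\Reg,\RNum{2}}_{\Geo,\bi}(f,\textbf{0},\chi),
\end{align*}
which was established in \textsection\ref{rref} as a consequence of the singularity matching that cancels the residues at $s=0$ of the small-cell and dual orbital integrals.

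First I would invoke Proposition \ref{prop12} to bound the small-cell term by $J^{\Reg}_{\Geo,\sm}(f,\chi)\ll M^{1+\varepsilon}T^{1/2+\varepsilon}$. Next, Proposition \ref{prop17} gives the matching estimate $J^{\Reg}_{\Geo,\du}(f,\chi)\ll M^{1+\varepsilon}T^{1/2+\varepsilon}$ for the dual orbital integral. Finally, Theorem \ref{thmD} controls the regular orbital contribution by $J^{\Reg,\RNum{2}}_{\Geo,\bi}(f,\textbf{0},\chi)\ll T^{\varepsilon}M^{\varepsilon}Q^{1+\varepsilon}\cdot \textbf{1}_{M\ll Q^2\gcd(M,Q)}$, where the stability indicator reflects the vanishing of the regular orbital sum once $M$ is too large relative to $Q^2\gcd(M,Q)$.

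Applying the triangle inequality and absorbing the three bounds yields
\begin{align*}
J_{\Geo}^{\Reg,\heartsuit}(f,\textbf{0},\chi)\ll T^{1/2+\varepsilon}M^{1+\varepsilon}+T^{\varepsilon}M^{\varepsilon}Q^{1+\varepsilon}\cdot \textbf{1}_{M\ll Q^2\gcd(M,Q)},
\end{align*}
exactly as claimed in \eqref{8.1}, with implied constants depending only on $\varepsilon$, $F$, and the parameters $c_v, C_v$ for $v\mid\infty$ inherited from the three cited results. Since all the hard work has been localized in Propositions \ref{prop12}, \ref{prop17}, and Theorem \ref{thmD}, there is no genuine obstacle at this stage; the only minor subtlety to verify is that the singularity matching $\underset{s=0}{\Res}\ J^{\Reg}_{\Geo,\sm}(f,\textbf{s},\chi)+\underset{s=0}{\Res}\ J^{\Reg}_{\Geo,\du}(f,\textbf{s},\chi)\equiv 0$ noted in \textsection\ref{rref} is what legitimates the definitions of the regularized integrals used on the right-hand side, so no spurious polar terms are introduced when passing from $\mathbf{s}$ near zero to $\mathbf{s}=\mathbf{0}$.
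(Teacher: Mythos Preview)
Your proposal is correct and follows essentially the same approach as the paper's own proof: both combine Propositions \ref{prop12} and \ref{prop17} for the irregular terms with Theorem \ref{thmD} for the regular orbital integrals via the decomposition \eqref{geom}. Your additional remark on the singularity matching is accurate but not needed here, since that cancellation was already absorbed into the definitions of $J^{\Reg}_{\Geo,\sm}(f,\chi)$ and $J^{\Reg}_{\Geo,\du}(f,\chi)$ in \textsection\ref{rref}.
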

\begin{proof}
By Propositions \ref{prop12} and \ref{prop17}, we have 
\begin{align*}
J^{\Reg}_{\Geo,\sm}(f,\chi)+J^{\Reg}_{\Geo,\du}(f,\chi)\ll_{\varepsilon} M^{1+\varepsilon}T^{\frac{1}{2}+\varepsilon},
\end{align*}
where the implied constant depends only on $F,$ $\varepsilon,$ and $c_v,$ $C_v$ at $v\mid\infty,$ cf. \textsection \ref{2.1.2}. Moreover, by Theorem \ref{thmD} we have 
\begin{align*}
J^{\Reg,\RNum{2}}_{\Geo,\bi}(f,\textbf{0},\chi)\ll T^{\varepsilon}M^{\varepsilon}Q^{1+\varepsilon}\cdot \textbf{1}_{M\ll Q^2\gcd(M,Q)}.
\end{align*}	

The estimate \eqref{8.1} follows from the above inequalities. 
\end{proof}

\subsection{Put It All Together: Proof of Main Results}\label{sec7.3}

Substituting Theorem \ref{thm6} and Proposition \ref{prop7.2} into the regularized relative trace formula $J_{\Spec}^{\Reg,\heartsuit}(f,\textbf{0},\chi)=J_{\Geo}^{\Reg,\heartsuit}(f,\textbf{0},\chi)$ (cf. Corollary \ref{cor3.3} in \textsection\ref{rref}), we obtain the following.
\begin{thmx}\label{thmE}
Let the notation be as before. Denote by $\mathcal{A}_0(\Pi_{\infty},\mathfrak{M};\chi_{\infty},\omega)$ the set of cuspidal representations and $\mathcal{X}_0(\Pi_{\infty},\mathfrak{M};\chi_{\infty},\omega)$ the set of Hecke characters, as defined in \textsection\ref{auto}. Then 
\begin{equation}\label{8.2}
\sum_{\pi}|L(1/2,\pi\times\chi)|^2\ll T^{1+\varepsilon}M^{1+\varepsilon}Q^{\varepsilon}+T^{\frac{1}{2}+\varepsilon}M^{\varepsilon}Q^{1+\varepsilon}\cdot \textbf{1}_{M\ll Q^2\gcd(M,Q)},
\end{equation}
where $\pi \in\mathcal{A}_0(\Pi_{\infty},\mathfrak{M};\chi_{\infty},\omega),$ and 
\begin{align*}
\sum_{\eta}\int_{\mathbb{R}}\frac{|L(1/2+it,\eta\chi)L(1/2+it,\omega\overline{\eta}\chi)|^2}{|L(1+2it,\omega\overline{\eta}^2)|^2}dt&\ll T^{1+\varepsilon}M^{1+\varepsilon}Q^{\varepsilon}\\
&+T^{\frac{1}{2}+\varepsilon}M^{\varepsilon}Q^{1+\varepsilon}\cdot \textbf{1}_{M\ll Q^2\gcd(M,Q)},
\end{align*}
where $\eta\in \mathcal{X}_0(\Pi_{\infty},\mathfrak{M};\chi_{\infty},\omega),$ and the implied constant depends only on $F,$ $\varepsilon,$ and $c_v,$ $C_v$ at $v\mid\infty,$ cf. \textsection \ref{2.1.2}. 
\end{thmx}

\begin{proof}[Proof of Theorem \ref{A}]
If $C_{\fin}(\chi)>1,$ then Theorem \ref{A} follows from \eqref{8.2}. In the case where $C_{\fin}(\chi)=1$, we replace $\chi$ with $\chi\chi_0$, where $\chi_0$ is a fixed Hecke character induced from a Dirichlet character with a fixed modulus, such as $3$. Similarly, we replace $\pi$ with $\pi\otimes\overline{\chi}_0$. By applying Theorem \ref{thmE} to $\pi\otimes\overline{\chi}_0$ and $\chi\chi_0$, we obtain the same bound (with a different implied constant dependent on the modulus of $\chi_0$) for the second term $L(1/2,\pi\times\chi)$. Consequently, Theorem \ref{A} follows.
\end{proof}

\begin{proof}[Proof of Corollaries \ref{C}]
Let $\pi=\eta\boxplus\eta.$ Then $\omega=\eta^2.$ By \cite[Lemma 3.6]{Yan23c} there exists $t_0\in [2^{-1}\exp(-3\sqrt{\log C(\eta\chi)}), \exp(-3\sqrt{\log C(\eta\chi)})]$ (which might depend on the character $\eta\chi$) such that
\begin{equation}\label{8.3}
|L(1/2,\eta\chi)|\ll \exp(\log^{\frac{3}{4}}C(\eta\chi))|L(1/2+it_0,\eta\chi)|,
\end{equation}
where the implied constant depends only on $F.$ Here $C(\chi\eta)\ll M^{1/2}Q$ is the analytic conductor of $\eta\chi.$ By \cite[(1.2.3)]{Ram95} we have
\begin{align*}
\int_{\mathbb{R}}\frac{|L(1/2+it,\eta\chi)L(1/2+it,\omega\overline{\eta}\chi)|^2}{|L(1+2it,\omega\overline{\eta}^2)|^2}dt\gg \frac{|L(1/2+it_0,\eta\chi)L(1/2+it_0,\omega\overline{\eta}\chi)|^2}{C(\eta\chi)^{\varepsilon}}.
\end{align*}

Since $\omega=\eta^2,$ then $|L(1/2+it_0,\omega\overline{\eta}\chi)|=|L(1/2+it_0,\eta\chi)|.$ So it follows from Theorem \ref{thmE} that, for $\chi\in \mathcal{X}_0(\Pi_{\infty},\mathfrak{M};\chi_{\infty},\omega),$  
\begin{equation}\label{8.4}
|L(1/2+it_0,\eta\chi)|^4\ll T^{1+\varepsilon}M^{1+\varepsilon}Q^{\varepsilon}+T^{\frac{1}{2}+\varepsilon}M^{\varepsilon}Q^{1+\varepsilon}\cdot \textbf{1}_{M\ll Q^2\gcd(M,Q)}.
\end{equation}

Suppose $\eta$ is primitive. Then $C_{\fin}(\eta)=M^{1/2}.$ It then follows from \eqref{8.3} and \eqref{8.4} that 
\begin{align*}
L(1/2,\eta\chi)\ll_{\eta_{\infty},\chi_{\infty}} C_{\fin}(\eta)^{\frac{1}{2}+\varepsilon}+C_{\fin}(\chi)^{\frac{1}{4}+\varepsilon}. 
\end{align*}

By symmetry we also have
\begin{align*}
L(1/2,\eta\chi)\ll_{\eta_{\infty},\chi_{\infty}} C_{\fin}(\eta)^{\frac{1}{4}+\varepsilon}+C_{\fin}(\chi)^{\frac{1}{2}+\varepsilon}.
\end{align*}

Hence the estimate \eqref{1.2.} holds.
\end{proof}

\subsection{Proof of Corollary \ref{cornon}}
Let $f\in\mathcal{F}_{2k}^{new}(N).$ By Hecke's theorem there exists a primitive quadratic character $\chi$ of conductor $q \ll kN^{1+\varepsilon}$ such that $L(1/2,f\times\chi)\neq 0.$ Here the implied constant is absolute.

Let $k\in \{2,3,4,5,7\}.$ Denote by $\mathcal{N}:=\#\big\{g\in \mathcal{F}_{2k}^{\new}(N):\ L(1/2,f\times\chi)L(1/2,g\times\chi)\neq 0\}.$ Recall that (e.g., cf. \cite[p.987]{Kum20})
\begin{align*}
\sum_{g\in \mathcal{F}_{2k}^{\new}(N)}L(1/2,g\times\chi)\gg N^{1-\varepsilon}.
\end{align*}

Then by Cauchy-Schwarz inequality and Corollary \ref{cor1.5} we obtain 
\begin{align*}
N^{1-\varepsilon}\ll \mathcal{N}^{\frac{1}{2}}\cdot \Bigg[\sum_{g\in \mathcal{F}_{2k}^{\new}(N)}|L(1/2,g\times\chi)|^2\Bigg]^{\frac{1}{2}}\ll \mathcal{N}^{\frac{1}{2}}\cdot N^{\frac{1}{2}+\varepsilon},
\end{align*}
leading to \eqref{eq1.6}. Here the implied constant depends only on $\varepsilon.$ 

\begin{remark}
In the above proof, a crucial new ingredient is our Corollary \ref{cor1.5}, which effectively replaces the third moment estimate employed in \cite[Theorem 1]{PY19}:
\begin{equation}\label{8.5}
\sum_{g\in \mathcal{F}_{2k}^{\new}(N)}L(1/2,g\times\chi)^3\ll_{k,\varepsilon}(Nq)^{1+\varepsilon}.
\end{equation}

It is worth noting that Corollary \ref{cor1.5}, given by
\begin{align*}
\sum_{g\in \mathcal{F}_{k}^{\new}(N)}|L(1/2,g\times\chi)|^2\ll (kNq)^{\varepsilon}(kN+k^{\frac{1}{2}}q\cdot \textbf{1}_{N\ll q^2\gcd(N,q)}),
\end{align*}
provides the average Lindel\"{o}f estimate in the $N$-aspect when $q\ll_{k} N^{1+\varepsilon}$. However, \eqref{8.5} does not yield this bound when $q$ is large. 

\end{remark}

\bibliographystyle{alpha}

\bibliography{LY}

\begin{thebibliography}{Yan23b}

\bibitem[Art79]{Art79}
James Arthur.
\newblock Eisenstein series and the trace formula.
\newblock In {\em Automorphic forms, representations and {$L$}-functions
  ({P}roc. {S}ympos. {P}ure {M}ath., {O}regon {S}tate {U}niv., {C}orvallis,
  {O}re., 1977), {P}art 1}, Proc. Sympos. Pure Math., XXXIII, pages 253--274.
  Amer. Math. Soc., Providence, R.I., 1979.

\bibitem[CI00]{CI00}
J.~B. Conrey and H.~Iwaniec.
\newblock The cubic moment of central values of automorphic {$L$}-functions.
\newblock {\em Ann. of Math. (2)}, 151(3):1175--1216, 2000.

\bibitem[FW09]{FW09}
Brooke Feigon and David Whitehouse.
\newblock Averages of central {$L$}-values of {H}ilbert modular forms with an
  application to subconvexity.
\newblock {\em Duke Math. J.}, 149(2):347--410, 2009.

\bibitem[HL94]{HL94}
Jeffrey Hoffstein and Paul Lockhart.
\newblock Coefficients of maass forms and the siegel zero.
\newblock {\em Annals of Mathematics}, pages 161--176, 1994.

\bibitem[HN18]{HP18}
Yueke Hu and Paul~D Nelson.
\newblock New test vector for waldspurger's period integral, relative trace
  formula, and hybrid subconvexity bounds.
\newblock {\em arXiv preprint arXiv:1810.11564}, 2018.

\bibitem[Jac09]{Jac09}
Herv\'{e} Jacquet.
\newblock Archimedean {R}ankin-{S}elberg integrals.
\newblock In {\em Automorphic forms and {$L$}-functions {II}. {L}ocal aspects},
  volume 489 of {\em Contemp. Math.}, pages 57--172. Amer. Math. Soc.,
  Providence, RI, 2009.

\bibitem[Kha21]{Kha21}
Rizwanur Khan.
\newblock Subconvexity bounds for twisted {$L$}-functions.
\newblock {\em Q. J. Math.}, 72(3):1133--1145, 2021.

\bibitem[Kum20]{Kum20}
Balesh Kumar.
\newblock Level aspect of simultaneous nonvanishing for the central
  {$L$}-values associated to cusp forms.
\newblock {\em Proc. Amer. Math. Soc.}, 148(3):979--991, 2020.

\bibitem[Luo17]{Luo17}
Wenzhi Luo.
\newblock On simultaneous nonvanishing of the central {$L$}-values.
\newblock {\em Proc. Amer. Math. Soc.}, 145(10):4227--4231, 2017.

\bibitem[MR12]{MR12}
Philippe Michel and Dinakar Ramakrishnan.
\newblock Consequences of the {G}ross-{Z}agier formulae: stability of average
  {$L$}-values, subconvexity, and non-vanishing mod {$p$}.
\newblock In {\em Number theory, analysis and geometry}, pages 437--459.
  Springer, New York, 2012.

\bibitem[Nel20]{Nel20}
Paul~D Nelson.
\newblock Spectral aspect subconvex bounds for {$\mathrm{U}_{n+1}\times
  \mathrm{U}_n$}.
\newblock {\em arXiv preprint arXiv:2012.02187}, 2020.

\bibitem[Nel21]{Nel21}
Paul~D Nelson.
\newblock Bounds for standard {$L$}-functions.
\newblock {\em arXiv preprint arXiv:2109.15230}, 2021.

\bibitem[NV21]{NV21}
Paul~D Nelson and Akshay Venkatesh.
\newblock The orbit method and analysis of automorphic forms.
\newblock {\em Acta Math.}, (226(1)):1--209, 2021.

\bibitem[PY19]{PY19}
Ian Petrow and Matthew~P. Young.
\newblock A generalized cubic moment and the {P}etersson formula for newforms.
\newblock {\em Math. Ann.}, 373(1-2):287--353, 2019.

\bibitem[PY20]{PY20}
Ian Petrow and Matthew~P. Young.
\newblock The {W}eyl bound for {D}irichlet {$L$}-functions of cube-free
  conductor.
\newblock {\em Ann. of Math. (2)}, 192(2):437--486, 2020.

\bibitem[Ram95]{Ram95}
Kanakanahalli Ramachandra.
\newblock {\em On the mean-value and omega-theorems for the Riemann
  zeta-function}.
\newblock Number~85. Tata Institute of Fundamental Research, 1995.

\bibitem[RR05]{RR05}
Dinakar Ramakrishnan and Jonathan Rogawski.
\newblock Average values of modular {$L$}-series via the relative trace
  formula.
\newblock {\em Pure Appl. Math. Q.}, 1(4, Special Issue: In memory of Armand
  Borel. Part 3):701--735, 2005.

\bibitem[Yan23a]{Yan23a}
Liyang Yang.
\newblock Relative trace formula and {$L$}-functions for
  {$\mathrm{GL}(n+1)\times \mathrm{GL}(n)$}.
\newblock {\em arXiv preprint arXiv:2303.02225}, 2023.

\bibitem[Yan23b]{Yan23c}
Liyang Yang.
\newblock Relative trace formula and twisted $ l $-functions: the burgess
  bound.
\newblock {\em arXiv preprint arXiv:2305.10719}, 2023.

\bibitem[You17]{You17}
Matthew~P. Young.
\newblock Weyl-type hybrid subconvexity bounds for twisted {$L$}-functions and
  {H}eegner points on shrinking sets.
\newblock {\em J. Eur. Math. Soc. (JEMS)}, 19(5):1545--1576, 2017.

\end{thebibliography}

\end{document}